\newtheorem{theorem}{Theorem}[section]
\newtheorem{corollary}[theorem]{Corollary}
\newtheorem{lemma}[theorem]{Lemma}
\theoremstyle{definition}
\newtheorem{defn}{Definition}[section]
\theoremstyle{remark}
\def\numberlikeadb{\global\def\theequation{\thesection.\arabic{equation}}}
\newcommand{\eqa}{\begin{eqnarray}}
\newcommand{\ena}{\end{eqnarray}}
\newcommand{\eq}{\begin{equation}}
\newcommand{\en}{\end{equation}}
\newcommand{\eqs}{\begin{eqnarray*}}
\newcommand{\ens}{\end{eqnarray*}}
\def\bone{{\bf 1}}
\def\th{{\Theta}}
\def\om{{\Omega}}
\def\thom{{\Theta\times\Omega}}
\begin{document}
\title[Metapopulation with Markovian landscapes]{A metapopulation model with Markovian landscape dynamic}
\thanks{PKP and RM are supported in part by the Australian Research Council (Discovery Grant DP150101459 and the ARC Centre of Excellence for Mathematical and Statistical Frontiers, CE140100049). \\ \copyright 2016. This manuscript version is made available under the CC-BY-NC-ND 4.0 license http://creativecommons.org/licenses/by-nc-nd/4.0/}
\maketitle
\noindent R. McVINISH\footnote{Corresponding author: email r.mcvinish@uq.edu.au}, P.K. POLLETT and Y.S. CHAN \\
School of Mathematics and Physics, University of Queensland \\

\noindent ABSTRACT. We study a variant of Hanski's incidence function
model that allows habitat patch characteristics to vary over time
following a Markov process. The widely studied case where patches are
classified as either suitable or unsuitable is included as a special
case. For large metapopulations, we determine a recursion for the
probability that a given habitat patch is occupied.
This recursion enables us to clarify the role of
landscape dynamics in the survival of a metapopulation. In particular,
we show that landscape dynamics affects the persistence and equilibrium level of the metapopulation primarily through its effect on the distribution of a local population's life span.


\section{Introduction}

A metapopulation is a collection of local populations of a single focal species occupying spatially distinct habitat patches. Much of the research on metapopulations has focussed on identifying and quantifying extinction risks, with mathematical modelling playing an important role. Levins \cite{Levins:69} proposed the first model of a metapopulation, which, despite its many simplifying assumptions, provided a number of important insights \cite{Hanski:91}. The importance of spatial features such as landscape heterogeneity and patch connectivity to metapopulation persistence was demonstrated in subsequent research \cite{Hanski:94,OH:01,MN:02,HO:03}  and by connections to interacting particle systems \cite{Liggett:05,DL:94,Franc:04}. Of particular relevance to the current work is the Incidence Function Model (IFM) \citep{Hanski:94}, which relates the colonisation and local extinction probabilities to landscape characteristics. This model has been used to study extinction risk and the effectiveness of conservation measures for a number of populations including the African lion (\emph{Panthera leo}) in Kenya and Tanzania \citep{DSHLF:14}, the water vole (\emph{Arvicola amphibius}) in the UK \citep{MB:11}, and the prairie dog (\emph{Cynomys ludovicianus}) in northern Colorado, USA \citep{GWPSA:13}.

While real landscapes are structured spatially, they also vary temporally. Landscape dynamics are known to play an important role in the persistence/extinction of a number of species \citep{vTVO:12}. As an example, \citet{Hanski:99} mentions the marsh fritillary butterfly (\emph{Eurodryas aurinia}) whose host plant \emph{Succisa pratensis} occurs in forest clearings that are between two and ten years old. The metapopulation of sharp-tailed grouse (\emph{Tympanuchus phasianellus}), which occupies areas of grassland, is similarly affected by landscape dynamics \citep{ARMH:04, GN:00}. For this species, fire opens new grassland areas and prevents the encroachment of forests. Other examples include metapopulations of the perennial herb \emph{Polygonella basiramia} \citep{BMW:03} and metapopulations of the beetle \emph{Stephanopachys linearis}, which breeds only in burned trees \citep{RBHWC:14}. In these examples, the landscape dynamics are driven by secondary succession, and this is often the case regardless of whether the focal species is part of a seral community or the climax community. 

Some authors \citep{BOGKFLG:99,VVVCPH:04,JRS:12} have attempted to deal with landscape dynamics by incorporating the time elapsed since the patch was created through the local extinction probability.   A more widely used and studied approach incorporates landscape dynamics by allowing each patch to alternate between being suitable or unsuitable for supporting a local population. In its simplest form all patches are treated equally \cite{KMVHL:00,Ross:06,WCP:06,RSAH:15}. A more general form used by \citet{DFS:05} and \citet{XFAS:06} incorporates differences between patches in area and extinction rates. These studies demonstrate an important relationship between the time scale of metapopulation dynamics and landscape dynamics. When the habitat life span is too short, the metapopulation is unable to become established \cite{KMVHL:00,DFS:05}. Furthermore, ignoring landscape dynamics leads to inaccurate persistence criteria. In general, the persistence criteria is optimistic \cite{Ross:06,XFAS:06}, though not necessarily for species which are able to react to habitat destruction \cite{RSAH:15}.

The main problem we see with the suitable/unsuitable classification is that it is too coarse. Treating patches as being unsuitable may be reasonable following a destructive event, but this approach is unable to handle typical environmental fluctuations in habitat size or quality. We note that earlier examples of modelling change in natural landscapes with Markov chains used larger state spaces \cite[Table 2]{Baker:89}. A related but more subtle problem is that all two-state Markov chains, like those used to model landscape dynamics in the above papers, are reversible in the sense that the process appears the same when time is reversed \cite[section 1.2]{Kelly:79}. The process of landscape succession following disturbance typically proceeds through a number of stages in a fixed order \cite{PLD:16}. This is inconsistent with reversibility since for a reversible Markov chain any path which ultimately returns to the starting state must have the same probability regardless of whether this is traced in one direction or the other \cite[section 1.5]{Kelly:79}. Furthermore, the transition kernel of a reversible Markov chain has a special structure which most Markov chains do not possess \cite[Theorem 1.7]{Kelly:79}.

In this paper we adopt a similar approach to the one proposed by \citet{Hanski:99} and incorporate landscape dynamics into the incidence function model by treating landscape characteristics as a stochastic process. To the authors' knowledge, this approach has not been subject to a detailed mathematical analysis. Specifically, we model the landscape characteristics  as a Markov chain on a general state space. The Markov chain model facilitates the analysis while allowing a general state space avoids the issues raised with the suitable/unsuitable classification. Our primary aim is to understand the affect of landscape dynamics on the survival of the metapopulation. Previous studies using the suitable/unsuitable classification have shown that landscape dynamics affect the survival of large metapopulations only through the average life span of the local populations \cite{DFS:05,XFAS:06}. It is natural to consider whether this behaviour holds for more general landscape dynamics. In order to address this question, we study the limiting behaviour of the metapopulation when the number of patches is large. Using an analysis similar to our earlier work \citep{MP:12,MP:13,MP:14}, we show large metapopulations display a  deterministic limit and asymptotic independence of local populations. All proofs are given in the Appendix.

\section{Model description} \label{Sec:MD}

Hanski's IFM \citep{Hanski:94} describes the evolution of a metapopulation as a discrete-time Markov chain. In this model, the dynamics of the metapopulation are determined by characteristics of the habitat patches. These characteristics are the patch's location, a weight related to the size of the patch, and the probability that a local population occupying this patch survives a given period of time. The colonisation and extinction processes which govern the metapopulation are specified by these characteristics. The local population of a patch will go extinct at the next time step with probability given by one minus the local survival probability. A patch that is currently empty will be colonised by individuals migrating from the occupied patches at the next time step with a probability that depends on its relative location to the occupied patches and the weight associated with those occupied patches. For a metapopulation comprising $ n $ patches, the presence or absence of the focal species at each of the patches determines the metapopulation's state.

Before describing the metapopulation dynamics more formally, we need to introduce some notation. The state of the metapopulation at time~$ t $ is denoted by the binary vector $ X^{n}_{t} = (X_{1,t}^{n},\ldots,X_{n,t}^{n})$, where $ X_{i,t}^{n} = 1 $ if patch $
i $ is occupied at time~$ t $ and $ X_{i,t}^{n} = 0 $ otherwise. The patch location, weight and survival probability of patch $ i $ are denoted by $ z_{i},\ A_{i} $ and $ s_{i} $, respectively. We let $ z^{n},\ A^{n} $ and $ s^{n} $ denote the respective vectors of all $ n $ patch locations, weights and survival probabilities. Since we view the landscape as the result of some random process, we treat $ z^{n},\ A^{n} $ and $ s^{n} $ as random vectors. Conditional on the state of the metapopulation at time $ t$  and the patch characteristics, the $ X_{i,t+1}^{n} \ (i=1,\ldots,n) $ are independent with transition probabilities 
\begin{equation}
\mathbb{P}\left(X_{i,t+1}^{n}=1 \mid X_{t}^{n},z^{n},A^{n},s^{n}\right)  = s_{i}X^{n}_{i,t} + f\left(\sum_{j=1}^{n} X_{j,t}^{n} D(z_{i},z_{j}) A_{j}\right)\left(1-X_{i,t}^{n}\right), \label{MD:Eq1}
\end{equation}
where $D(z_{i},z_{j}) = \exp(-\alpha \|z_{i} - z_{j}\|) $ for some $ \alpha > 0 $ and the function $ f :[0,\infty) \rightarrow [0,1]$ is the colonisation function. \citet{Hanski:94} takes the colonisation function to be $ f(x) = \beta x^{2}/(\gamma + \beta x^{2}) $ for some $ \beta,
\gamma > 0 $ and assumes the survival probability $ s_{i} $ is a function of $ A_{i} $. The sum $ \sum_{j=1}^{n} X_{j,t}^{n} D(z_{i},z_{j}) A_{j} $ is called the connectivity measure. It specifies how each occupied patch contributes to the potential colonisation of any given unoccupied patch $i$. Larger patches make a greater contribution to the connectivity measure as they are expected to have a greater capacity to produce propagules. These propagules then disperse to other patches based on their proximity so nearby patches contribute more to the connectivity measure.

Before incorporating landscape dynamics into this model, it will be useful to briefly discuss our approach to the analysis. The central idea is that as the number of habitat patches increases, the connectivity measure at each location converges to a deterministic limit in probability. This is based on scaling  the connectivity measure with the number of patches in the metapopulation and can be achieved in a number of ways. One way is if the total habitable area for the focal species is fixed. As the number of patches increases, the landscape becomes more fragmented. If all patches are of a comparable size, then $ A_{i} $ should be of the order $ n^{-1} $ and the connectivity measure for patch $ i $ can be expressed as
\begin{equation}
n^{-1} \sum_{j=1}^{n}  X_{j,t}^{n} D(z_{i},z_{j}) a_{j}, \label{MD:Con:Eq1}
\end{equation}
where $ A_{j} = n^{-1} a_{j} $. This type of scaling is discussed in \citet[section 5]{BMP:15} in connection with approximating the stochastic model (\ref{MD:Eq1}) by a deterministic difference equation. Alternatively, we might consider that each patch has a finite number of propagules that can be dispersed in a given period. These propagules are divided between the $ n -1 $ potential destination patches based on their proximity. As $ n \rightarrow \infty $, the propagules are divided between more patches. This introduces a factor of $ n^{-1} $ in the connectivity measure resulting in (\ref{MD:Con:Eq1}). Finally, in a similar spirit to \cite{OC:06}, we could consider an increasing region with constant density of patch locations and scale the dispersal kernel so that $ D_{n}(z_{i},z_{j}) = \alpha_{n}^{d}\exp(-\alpha_{n}\|z_{i} - z_{j}\|) $, with $ \alpha_{n} \rightarrow 0 $. Provided the rates at which the region increases and $ \alpha_{n} $ `balance', the same limiting model will be obtained.  Other scalings that result in a `weak law of large numbers' for the connectivity measure, that is convergence in probability of the connectivity measure to a deterministic quantity, will lead to a similar limiting process.

We incorporate landscape dynamics into the incidence function model assuming that only the survival probabilities $ s_{i} $ and the patch weights $ a_{i} $ evolve over time and the patch locations remain static. The survival probability and weight of a patch are modelled using a single variable $ \theta $, which  we call the {\em characteristic\/} of the patch, taking values in some set $ \Theta $. If the characteristic of patch $ i $ at time $ t $ is $ \theta_{i,t} $, then the survival probability and weight are given by $ s(\theta_{i,t}) $ and $ a(\theta_{i,t}) $ respectively, where $ s: \Theta \rightarrow [0,1] $ and $ a:\Theta \rightarrow [0,\infty) $. In this way, we allow dependence between the local survival probability and patch weight. If $ a(\cdot) $ is an invertible function, then we can express the local survival probabilities as a function of patch weights as in \citet{Hanski:94}. Conditional on $ X_{t}^{n}$, $\theta^{n}_{t} :=(\theta_{1,t},\ldots,\theta_{n,t}) $ and $ z^{n} $, the $ X_{i,t+1}^{n} \ (i=1,\ldots,n) $ are independent with transition probabilities 
\begin{equation}
\mathbb{P}\left(X_{i,t+1}^{n}=1 \mid X_{t}^{n},\theta^{n}_{t},z^{n}\right)  = s(\theta_{i,t})X^{n}_{i,t} + f\left(n^{-1}\sum_{j=1}^{n} X_{j,t}^{n} D(z_{i},z_{j})   a(\theta_{j,t})\right)\left(1-X_{i,t}^{n}\right). \label{MD:Eq2}
\end{equation}

Equation (\ref{MD:Eq2}) is a natural extension of Hanski's incidence function model allowing for a dynamic landscape. However, the form of the colonisation probability means that the characteristic of patch $ i $ does not affect the probability that it is colonised, and this may be undesirable in some applications. \citet{MN:02} consider a connectivity measure where the size of the target patch increases the probability of colonisation. Here we allow the colonisation function to be a function of both the connectivity and the characteristic of patch to be colonised, that is,
\begin{equation}
\mathbb{P}\left(X_{i,t+1}^{n}=1 \mid X_{t}^{n},\theta^{n}_{t},z^{n}\right)  = s(\theta_{i,t})X^{n}_{i,t} + f\left(n^{-1}\sum_{j=1}^{n} X_{j,t}^{n} D(z_{i},z_{j})   a(\theta_{j,t}); \theta_{i,t}\right)\left(1-X_{i,t}^{n}\right). \label{MD:Eq2b}
\end{equation}
A model with the connectivity measure proposed by \citet{MN:02} is obtained by setting $ f(x,\theta) = \bar{f}(b(\theta)x) $ for some functions $ \bar{f}: [0,\infty) \rightarrow [0,1] $ and $ b:\Theta \rightarrow [0,\infty) $. Perhaps more importantly, model (\ref{MD:Eq2b}) allows for two effects that are not possible with model (\ref{MD:Eq2}); phase structure and pulsed dispersal.

Suppose the colonisation and extinction phases alternate, with observations of the metapopulation made after the extinction phase. In this case the colonisation probability has the form $ f(x,\theta) = s(\theta) \bar{f}(x,\theta) $ where $ \bar{f}: [0,\infty) \times \Theta \rightarrow [0,1] $. This type of phase structure has previously been used in \cite{AG:91,DP:95,HC:01,MP:13}. We note that if observations were instead taken after the colonisation phase, then the model would display the rescue effect \cite{Hanski:94}.

Pulsed dispersal occurs when migration from a colonised patch is the result of the species response to a decline in habitat quality. A continuous time metapopulation model with suitable/unsuitable landscape dynamics is studied in \citep{RSAH:15}. Suppose for simplicity that $ \Theta = \{1,2\} $, where the state $ 1 $ indicates a suitable habitat patch and state $ 2 $ indicates an unsuitable habitat patch. Pulsed dispersal is achieved by taking $ a(2) > a(1) $, that is an occupied patch that has recently become unsuitable will contribute more to the colonisation of other patches than an occupied patch that is still suitable. Taking $ s(2) = f(\cdot,2) = 0 $ ensures the local population at the unsuitable patch becomes extinct with probability one and a patch cannot be recolonised while it is unsuitable.

\section{Asymptotic behaviour of a single patch} \label{Sec:Asym}

The typical approach to incorporating landscape dynamics into a metapopulation model is to allow each patch to alternate between being suitable or unsuitable for supporting a local population according to some Markov chain, independently of the other patches and of the state of the metapopulation \citep{KMVHL:00,Ross:06,WCP:06,XFAS:06,RSAH:15}. A natural extension is to model the temporal evolution of each patch characteristic by a Markov chain on a finite state space $ \{1,\ldots, m\} $  with a common transition probability matrix. However, the classification of habitat into one of a finite number of classes may be unnatural in some settings since, for example, it restricts patch areas to taking  only finitely many values. To avoid this, we model the habitat dynamics as a Markov chain on a general state space with a common transition probability kernel. 

We briefly recall some properties of Markov chains. Suppose $ Y_{t} $ is a Markov chain on the state space $ S $ and let $\Sigma$ be the set of all events of interest (that is, a $\sigma$-field of subsets of $S$). The transition probability kernel $ P $ gives the probability that the chain moves from a point $y$ to the set $ A \in \Sigma $ in one time step:
\begin{equation}
P(y,A) = \mathbb{P} (Y_{t+1} \in A \mid Y_{t} = y). \label{TransKern}
\end{equation}
When the $ \Theta $ is finite, $ \Sigma $ is just the set of all subsets of $ S $ and the right hand side of (\ref{TransKern}) is just $ \sum_{j\in A} P_{yj} $ where $ (P_{ij}) $ is the transition probability matrix of the Markov chain. Under certain weak conditions, the transition kernel has an invariant distribution $ \pi $, that is 
\[
\pi(A) = \int_{\th} P(x,A)\pi(dx), \quad \mbox{for all } A \in \Sigma.
\]
When the $ S $ is finite, so $ P $ is a transition probability matrix, the invariant distribution is simply a distribution $ \pi $ on $ S $ such that $ \pi P = \pi $ and we write $ \pi(A) = \sum_{i \in A} \pi_{i} $. In any case, if $ \pi $ is the distribution of $ Y_{1} $, then $ Y_{t} $ will also have this distribution for all $ t $ and the Markov chain is said to be stationary or in equilibrium. 

Stationary Markov chains with a state space of only two elements  have the rather special property of reversibility. Informally, this means the process will look the same if the direction of time is reversed. More precisely, a stationary Markov chain is reversible with respect to $ \pi $ if
\[
\int_{A} \pi(dx) P(x,B) = \int_{B} \pi(dx) P(x,A), \quad \mbox{for all } A,B \in \Sigma. 
\]
For a finite state space, the condition for reversibility is simply $ \pi_{i} P_{ij} = \pi_{j} P_{ji} $, for all $ i,j \in S $.  

Our analysis makes use of the dual transition kernel, a concept related to reversibility. If  $ P $ has invariant distribution $ \pi $, then there is a transition kernel $ P^{\ast} $ called a \emph{dual of $P$ with respect to} $\pi$  satisfying
\begin{equation}
\int_{A} \pi(dx) P(x,B) = \int_{B} \pi(dx) P^{\ast}(x,A), \quad \mbox{for all } A,B \in \Sigma  \label{Def:Eq1}
\end{equation}
(Theorem \ref{Thm:dual} of Appendix B).  For a finite state space, the dual transition probability matrix is the probability transition matrix satisfying $ \pi_{i} P_{ij} = \pi_{j} P^{\ast}_{ji} $, for all $ i,j \in S $. This expression provides a means of constructing $ P^{\ast} $ from $ P $ and $ \pi $. 

As previously noted, the landscape might be viewed as the result of some
random process. Here we assume that the  $ (\theta_{i,0},z_{i}),\
i=1,\ldots,n $, are independent and identically distributed. The
marginal distribution of the patch locations is assumed to be supported
on $ \Omega \subset \mathbb{R}^{d}$ and have a probability density
function which we denote by $ \zeta $.  Under mild conditions on the
transition kernel of the patch characteristic, landscapes that have
existed for a long time should at least be approximately stationary in
the sense that the distribution of $ \theta_{i,t} $ should converge to
its invariant distribution as $ t \rightarrow \infty $.
For Markov chains on general state spaces, convergence
to the invariant distribution is ensured by the technical condition of
positive Harris recurrence. When the state space is finite, such as for
the suitable/unsuitable classification, positive Harris recurrence holds if the Markov chain is irreducible, that is each state can be reached from any other, and aperiodic. The following result shows this implies independence of the patch location and characteristic. 

\begin{lemma}\label{lemma2}
Suppose that Markov chain $ (\theta_{i,t}, t \geq 0 ) $ taking values in $ \Theta $ is positive Harris and aperiodic with invariant distribution $ \pi $. For any $ i $, the distribution of $ (\theta_{i,t},z_{i}) $ converges as $ t $ goes to infinity. Furthermore, $ \theta_{i,t} $ and $ z_{i} $ are asymptotically independent in the sense that 
\begin{equation}
\lim_{t\rightarrow\infty} \mathbb{P}\left(\theta_{i,t} \in A, z_{i} \in B\right) =\pi\left(A\right)  \int_{B} \zeta(z)(dz), \label{lemma2:Eq1}
\end{equation}
for any measurable $ A \subset \Theta $ and $ B \subset \Omega $.
\end{lemma}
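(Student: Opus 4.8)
The plan is to exploit the structural fact that in the model the patch location $z_i$ is static while the characteristic $\theta_{i,t}$ evolves autonomously according to the common transition kernel $P$; in particular, conditionally on the initial pair $(\theta_{i,0},z_i)$, the chain $(\theta_{i,t})_{t\ge 0}$ depends only on $\theta_{i,0}$ and $\theta_{i,t}$ has $t$-step law $P^t(\theta_{i,0},\cdot)$. Fixing $i$ (all indices give the same distribution by the i.i.d.\ assumption) and writing $\nu(dz)=\zeta(z)\,dz$ for the marginal law of $z_i$, this gives, for measurable $A\subset\Theta$ and $B\subset\Omega$,
\[
\mathbb{P}\big(\theta_{i,t}\in A,\ z_i\in B\big)=\mathbb{E}\!\left[P^t(\theta_{i,0},A)\,\mathbf{1}\{z_i\in B\}\right].
\]

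Next I would invoke the convergence theorem for positive Harris recurrent aperiodic Markov chains, which gives $\|P^t(y,\cdot)-\pi\|_{\mathrm{TV}}\to 0$ as $t\to\infty$ for \emph{every} $y\in\Theta$; in particular $P^t(y,A)\to\pi(A)$ for each fixed $A$. Since the integrand in the display above is bounded by $1$, the dominated convergence theorem yields
\[
\mathbb{P}\big(\theta_{i,t}\in A,\ z_i\in B\big)\longrightarrow\mathbb{E}\!\left[\pi(A)\,\mathbf{1}\{z_i\in B\}\right]=\pi(A)\,\nu(B)=\pi(A)\int_B\zeta(z)\,dz,
\]
which is (\ref{lemma2:Eq1}).

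For the first assertion --- convergence of the full joint law of $(\theta_{i,t},z_i)$ --- I would run the same estimate over an arbitrary measurable $C\subset\Theta\times\Omega$, writing $C_z=\{\eta\in\Theta:(\eta,z)\in C\}$ for the $z$-section:
\[
\left|\mathbb{P}\big((\theta_{i,t},z_i)\in C\big)-(\pi\otimes\nu)(C)\right|=\left|\mathbb{E}\!\left[P^t(\theta_{i,0},C_{z_i})-\pi(C_{z_i})\right]\right|\le\mathbb{E}\!\left[\|P^t(\theta_{i,0},\cdot)-\pi\|_{\mathrm{TV}}\right]\to 0,
\]
again by dominated convergence. Hence $\mathcal{L}(\theta_{i,t},z_i)$ converges in total variation to the product measure $\pi\otimes\nu$; convergence of the joint distribution and asymptotic independence are then immediate, the latter being precisely the product form of the limit.

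The argument contains no serious obstacle; the two points needing care are the conditional-independence identity in the first display, which rests on the modelling assumption that the characteristic process is driven by $P$ alone and is independent of the location, and the invocation of the ergodic theorem in the form that delivers total-variation convergence from \emph{every} starting state rather than merely $\pi$-almost every one --- this is exactly where positive Harris recurrence together with aperiodicity is used. Everything else (measurability of $y\mapsto\|P^t(y,\cdot)-\pi\|_{\mathrm{TV}}$, the section bookkeeping, the passage to the limit) is a routine application of dominated convergence.
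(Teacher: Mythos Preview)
Your argument is correct and follows essentially the same route as the paper: condition on the initial pair $(\theta_{i,0},z_i)$, invoke the ergodic theorem for positive Harris aperiodic chains (Meyn--Tweedie, Theorem 13.3.3) to get $P^t(\theta,\cdot)\to\pi$ in total variation from every starting state, and pass to the limit by dominated convergence. The only cosmetic difference is that the paper tests against continuous functions $h\in C(\Theta\times\Omega)$ and thus concludes weak convergence, whereas you work directly with indicators and sections to obtain the slightly stronger total-variation convergence; both suffice for the lemma as stated.
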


The main result of this section concerns the behaviour of local populations. This result depends on being able to establish convergence in probability of the connectivity measure at each location as the number of habitat patches increases. 
A sequence of random variables $ \{Y^{n}\}_{n=1}^{\infty} $ is said to converge in probability to a random variable $ Y $ if, for any $ \epsilon > 0 $, $ \mathbb{P} ( \| Y^{n} - Y\| \geq \epsilon) \rightarrow 0 $. Convergence in probability is denoted $ Y^{n} \stackrel{p}{\rightarrow} Y $. We are now able to state our results on the behaviour of large metapopulations with Markovian landscape dynamics.

\begin{theorem} \label{thmAsym1}
Suppose that 
\begin{equation}
 \mathbb{P}(X_{i,0}^{n} = 1 \mid \theta_{i,0}=\theta, z_{i} = z) = q_{0}(\theta,z), \label{thmAsym1:eq1}
\end{equation}
for some function $ q_{0} : \Theta\times \Omega \rightarrow [0,1] $. Under the assumptions given in Appendix A, if $ X_{i,0}^{n} \stackrel{p}{\rightarrow} X_{i,0} $, then $ X_{i,t}^{n} \stackrel{p}{\rightarrow} X_{i,t} $ for all $ t \geq 0 $, where the transition probability for $ X_{i,t} $ is 
\begin{equation}
\mathbb{P}\left(X_{i,t+1}=1 \mid X_{i,t},\theta_{i,t} = \theta,z_{i}\right)  = s(\theta)X_{i,t} +  f \left( \psi_{t}(z_{i}); \theta\right) \left(1-X_{i,t}\right) \label{Thm2:Eq0}
\end{equation}
and 
\begin{align}
\psi_{t}(z) & =  \int_\Omega D(z,\tilde{z}) \left(\int_\th a(\theta) q_{t}(\theta,\tilde{z}) \pi(d\theta)\right) \zeta(\tilde{z}) d\tilde{z}, \label{Thm2:Eq1}\\
q_{t+1}(\theta,z) & = \int_\th s(\eta) q_{t}(\eta,z) P^{\ast}(\theta,d\eta)
+ \int_\th f \left(\psi_{t}(z); \eta\right) \left(1 - q_{t}(\eta,z)\right)
P^{\ast}(\theta,d\eta).  \label{Thm2:Eq2}
\end{align}
\end{theorem}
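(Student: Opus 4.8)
The plan is to argue by induction on $t$; the base case $t=0$ is the hypothesis $X_{i,0}^{n}\stackrel{p}{\rightarrow}X_{i,0}$, with $q_{0}$ given by (\ref{thmAsym1:eq1}). To make the induction close I would carry a stronger inductive hypothesis than marginal convergence: in addition to $X_{i,t}^{n}\stackrel{p}{\rightarrow}X_{i,t}$ with $q_{t}(\theta,z)=\mathbb{P}(X_{i,t}=1\mid\theta_{i,t}=\theta,z_{i}=z)$, assume that for each bounded measurable $g$ in the finite family arising below (built from $s$, $a$, and $f(\psi_{t}(\cdot);\cdot)$) the weighted empirical averages
\[
n^{-1}\sum_{j=1}^{n} D(z_{i},z_{j})\,g\big(X_{j,t}^{n},\theta_{j,t}\big)
\]
converge in probability, conditionally on $z_{i}$, to
\[
\int_{\om}D(z_{i},\tilde z)\Big(\int_{\th}\big[q_{t}(\theta,\tilde z)g(1,\theta)+(1-q_{t}(\theta,\tilde z))g(0,\theta)\big]\pi(d\theta)\Big)\zeta(\tilde z)\,d\tilde z .
\]
Here I use the stationarity of the landscape chain assumed in Appendix A, so $\theta_{j,t}\sim\pi$ and, by Lemma \ref{lemma2}, is independent of $z_{j}$; taking $g(x,\theta)=x\,a(\theta)$ recovers convergence of the connectivity measure $C_{i,t}^{n}:=n^{-1}\sum_{j}D(z_{i},z_{j})X_{j,t}^{n}a(\theta_{j,t})$ to $\psi_{t}(z_{i})$ of (\ref{Thm2:Eq1}).

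The inductive step then has four parts. (i) The convergence $C_{i,t}^{n}\stackrel{p}{\rightarrow}\psi_{t}(z_{i})$ together with continuity of $f(\cdot;\theta)$ (an Appendix A assumption) gives $f(C_{i,t}^{n};\theta_{i,t})\stackrel{p}{\rightarrow}f(\psi_{t}(z_{i});\theta_{i,t})$. (ii) Conditionally on the past, $X_{i,t+1}^{n}$ is Bernoulli with parameter $s(\theta_{i,t})X_{i,t}^{n}+f(C_{i,t}^{n};\theta_{i,t})(1-X_{i,t}^{n})$; coupling it through a single uniform random variable to the Bernoulli variable with the limiting parameter $s(\theta_{i,t})X_{i,t}+f(\psi_{t}(z_{i});\theta_{i,t})(1-X_{i,t})$ yields $X_{i,t+1}^{n}\stackrel{p}{\rightarrow}X_{i,t+1}$, where $X_{i,t+1}$ satisfies (\ref{Thm2:Eq0}). (iii) Re-establish the empirical-average hypothesis at time $t+1$: conditionally on $\mathcal{F}_{t}^{n}:=\sigma(X_{t}^{n},\theta_{t}^{n},\theta_{t+1}^{n},z^{n})$ the $X_{i,t+1}^{n}$ are independent, so each average $n^{-1}\sum_{j}D(z_{i},z_{j})g(X_{j,t+1}^{n},\theta_{j,t+1})$ has conditional variance $O(n^{-1})$, while its conditional mean equals $n^{-1}\sum_{j}D(z_{i},z_{j})\mathbb{E}[g(X_{j,t+1}^{n},\theta_{j,t+1})\mid\mathcal{F}_{t}^{n}]$, which converges by the time-$t$ hypothesis once the inner connectivity measures appearing inside $f$ are replaced by their limits $\psi_{t}(z_{j})$; Chebyshev's inequality then upgrades this to convergence in probability.

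Part (iv) is the recursion (\ref{Thm2:Eq2}). Write $q_{t+1}(\theta,z)=\mathbb{P}(X_{i,t+1}=1\mid\theta_{i,t+1}=\theta,z_{i}=z)$ and condition on $\theta_{i,t}$. Because the landscape is updated independently of the colonisation occurring at the same step, $X_{i,t+1}$ is conditionally independent of $\theta_{i,t+1}$ given $(X_{i,t},\theta_{i,t},z_{i})$; hence by (\ref{Thm2:Eq0}),
\[
\mathbb{P}(X_{i,t+1}=1\mid\theta_{i,t}=\eta,z_{i}=z)=s(\eta)q_{t}(\eta,z)+f(\psi_{t}(z);\eta)(1-q_{t}(\eta,z)).
\]
Integrating against the conditional law of $\theta_{i,t}$ given $\theta_{i,t+1}=\theta$ and using stationarity, that conditional law is the dual kernel: the local form $\pi(d\eta)P(\eta,d\theta)=\pi(d\theta)P^{\ast}(\theta,d\eta)$ of (\ref{Def:Eq1}) (Theorem \ref{Thm:dual}) identifies the time-reversed transition of the characteristic chain with $P^{\ast}$, so $\mathbb{P}(\theta_{i,t}\in d\eta\mid\theta_{i,t+1}=\theta)=P^{\ast}(\theta,d\eta)$. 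Substituting yields (\ref{Thm2:Eq2}).

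I expect the main obstacle to be step (iii): obtaining the law-of-large-numbers control of the connectivity measure with enough uniformity to drive the induction. The delicate points are identifying the correct conditioning field $\mathcal{F}_{t}^{n}$ (it must include $\theta_{t+1}^{n}$, which is legitimate because $\theta_{t+1}^{n}$ is conditionally independent of $X_{t+1}^{n}$ given $\theta_{t}^{n}$), justifying the replacement of each inner connectivity measure $n^{-1}\sum_{k}D(z_{j},z_{k})X_{k,t}^{n}a(\theta_{k,t})$ by $\psi_{t}(z_{j})$ simultaneously over the outer index $j$ (this uses boundedness of $D$, $a$ and $\zeta$ from Appendix A together with exchangeability of the patches), and checking that the time-$t$ empirical-average hypothesis holds jointly for the finite family of test functions needed. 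By contrast, the passage from a genuinely reversible finite-state landscape to a general state space adds no difficulty here, since $P^{\ast}$ enters only as the transition kernel of the reversed stationary chain.
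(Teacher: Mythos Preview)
Your strategy is the paper's strategy: induction on $t$, with the inductive step driven by a conditional-mean/conditional-variance Chebyshev argument for the empirical averages, and the recursion for $q_{t}$ obtained by recognising $P^{\ast}$ as the transition kernel of the time-reversed stationary characteristic chain. Two points of comparison are worth noting, since they are exactly where your sketch is thin.

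First, the paper carries the inductive hypothesis not as convergence of a ``finite family'' of averages $n^{-1}\sum_{j}D(z_{i},z_{j})g(X_{j,t}^{n},\theta_{j,t})$ but as weak convergence of the random measures $\mu_{n,t}(d\theta,dz)=n^{-1}\sum_{j}X_{j,t}^{n}\delta_{(\theta_{j,t},z_{j})}$ (and $\sigma_{n,t}$) on $\Theta\times\Omega$. This matters: after you replace the inner connectivity by $\psi_{t}(z_{j})$ in step (iii), the integrand depends on $z_{j}$ through $f(\psi_{t}(z_{j});\cdot)$, so test functions of $(X,\theta)$ alone do not close the induction. Working with measures on $\Theta\times\Omega$ handles this automatically; if you prefer to stay with sums, your inductive hypothesis must allow $g=g(x,\theta,z)$.

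Second, for the step you correctly flag as the crux, namely replacing $n^{-1}\sum_{k}D(z_{j},z_{k})X_{k,t}^{n}a(\theta_{k,t})$ by $\psi_{t}(z_{j})$ \emph{uniformly} in $j$, boundedness of $D$, $a$, $\zeta$ and exchangeability are not by themselves the right lever. The paper uses the equicontinuity part of Assumption~(D) together with Ranga Rao's theorem, which upgrades weak convergence $\mu_{n,t}\Rightarrow\mu_{t}$ to uniform convergence of $\int a(\theta)D(z,\tilde z)\,\mu_{n,t}(d\theta,d\tilde z)$ over $z\in\Omega$; this is what delivers the simultaneous replacement. With these two adjustments your argument coincides with the paper's Lemma~\ref{lemma1}, Theorem~\ref{theorem1}, Corollary~\ref{corollary1} and Theorems~\ref{theorem2}--\ref{corollary2}.
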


When $ \Theta $ is a finite state space, equations (\ref{Thm2:Eq1}) and (\ref{Thm2:Eq2}) can be expressed as 
\begin{align}
\psi_{t}(z) & =  \int_\Omega D(z,\tilde{z}) \left( \sum_{j} a(j)  q_{t}(j,\tilde{z}) \pi_{j} \right) \zeta(\tilde{z}) d\tilde{z}, \label{Thm1:Eq1}\\
q_{t+1}(i,z) & = \sum_{j} s(j) q_{t}(j,z) P^{\ast}_{ij} + \sum_{j} f \left(\psi_{t}(z); j\right) \left(1 - q_{t}(j,z)\right) P^{\ast}_{ij}. \label{Thm1:Eq2}
\end{align}
The conditions for Theorem \ref{thmAsym1} to hold are given in a very general form in Appendix A. In the context of a finite state space for the landscape characteristic, sufficient conditions for Theorem \ref{thmAsym1} to hold are as follows: (i) the landscape dynamics are irreducible, aperiodic and stationary, (ii) the support for the patch locations is bounded, and (iii) the colonisation function is Lipschitz continuous for each state of the landscape characteristic. These assumptions are very mild. As previously mentioned, landscapes that have existed for a long time should at least be approximately stationary. To the authors' knowledge, all colonisation functions used in practice are Lipschitz continuous.

This theorem states that the process $ ((X^{n}_{i,t},\theta_{i,t},z_{i}), t \geq 0 ) $ converges to a Markov chain and is asymptotically (as $ n \rightarrow \infty$) independent of the rest of the metapopulation. It is possible to extend this result to
the case of a finite collection of habitat patches so that the local population at each patch is independent of the local populations of all others in this collection. Results of this type are sometimes referred to as `propagation of chaos' \citep[for example][Proposition 4.3]{Leonard:90}.

The following two results provide the recursion (\ref{Thm2:Eq1}) - (\ref{Thm2:Eq2}) with a natural interpretation.

\begin{theorem} \label{thmAsym2}
Let $ ((X_{i,t},\theta_{i,t},z_{i}), t \geq 0 ) $ be the Markov chain defined by (\ref{Thm2:Eq0}) - (\ref{Thm2:Eq2}). Then  
\[ 
\mathbb{P} \left(X_{i,t} = 1 \mid \theta_{i,t} = \theta, z_{i} =z \right) = q_{t}(\theta,z)
\]
for all $ t \geq 0 $.
\end{theorem}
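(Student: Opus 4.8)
The plan is to prove the statement by induction on $t$, with the base case following from the hypothesis and the inductive step being a direct computation using the defining recursions. Let me write $q_t(\theta,z) = \mathbb{P}(X_{i,t}=1 \mid \theta_{i,t}=\theta, z_i=z)$ denote the \emph{true} conditional occupancy probability of the limiting chain; I want to show this equals the function $q_t$ defined by (\ref{Thm2:Eq1})--(\ref{Thm2:Eq2}). For the base case $t=0$, the hypothesis (\ref{thmAsym1:eq1}) of Theorem \ref{thmAsym1} states precisely that $\mathbb{P}(X_{i,0}^n = 1 \mid \theta_{i,0}=\theta, z_i=z) = q_0(\theta,z)$, and since $X_{i,0}^n \stackrel{p}{\rightarrow} X_{i,0}$ (with the $\theta_{i,0}, z_i$ held fixed along the sequence), the limiting chain inherits this same conditional law, so the claim holds at $t=0$.

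For the inductive step, assume $\mathbb{P}(X_{i,t}=1 \mid \theta_{i,t}=\theta, z_i=z) = q_t(\theta,z)$. The key subtlety is that the transition rule (\ref{Thm2:Eq0}) is expressed conditionally on $(X_{i,t}, \theta_{i,t}, z_i)$, with the chain running \emph{forward} in $\theta$, whereas the target recursion (\ref{Thm2:Eq2}) involves the \emph{dual} kernel $P^\ast$. So I would first condition on $(X_{i,t}, \theta_{i,t}, z_i)$ and use (\ref{Thm2:Eq0}) to write
\[
\mathbb{P}(X_{i,t+1}=1 \mid X_{i,t}, \theta_{i,t}=\eta, z_i=z) = s(\eta) X_{i,t} + f(\psi_t(z);\eta)(1 - X_{i,t}),
\]
then take expectations over $X_{i,t}$ given $(\theta_{i,t}=\eta, z_i=z)$ using the inductive hypothesis to get
\[
\mathbb{P}(X_{i,t+1}=1 \mid \theta_{i,t}=\eta, z_i=z) = s(\eta) q_t(\eta,z) + f(\psi_t(z);\eta)(1 - q_t(\eta,z)).
\]
Now I need to pass from conditioning on $\theta_{i,t}$ to conditioning on $\theta_{i,t+1}$. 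This is exactly where the dual kernel enters: by the defining property (\ref{Def:Eq1}) of $P^\ast$ together with stationarity of the landscape chain (so $\theta_{i,t}$ and $\theta_{i,t+1}$ both have law $\pi$, and $z_i$ is independent of the landscape evolution), the regular conditional distribution of $\theta_{i,t}$ given $\theta_{i,t+1}=\theta$ is $P^\ast(\theta,\cdot)$. Integrating the displayed expression against $P^\ast(\theta, d\eta)$ then yields precisely (\ref{Thm2:Eq2}), completing the induction.

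The main obstacle I anticipate is the measure-theoretic bookkeeping in that last step: one must justify carefully that conditioning on the future state $\theta_{i,t+1}$ and then averaging the one-step-back dynamics is legitimate, i.e.\ that $\mathbb{P}(X_{i,t+1}=1 \mid \theta_{i,t+1}=\theta, z_i=z) = \int_\Theta \mathbb{P}(X_{i,t+1}=1 \mid \theta_{i,t}=\eta, z_i=z)\, P^\ast(\theta,d\eta)$. This requires that, conditionally on $\theta_{i,t+1}$, the random variable $X_{i,t+1}$ depends on the past only through $\theta_{i,t}$ (and $z_i$) --- which follows because $X_{i,t+1}$ is generated from $(X_{i,t},\theta_{i,t},z_i)$ alone and the landscape chain is Markov --- and it requires the reversed-time conditional law identity, which is the content of the dual kernel relation (\ref{Def:Eq1}) specialized to the stationary chain. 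Once these two facts are in hand the computation is routine; everything else is a mechanical unwinding of (\ref{Thm2:Eq0}) and the law of total probability.
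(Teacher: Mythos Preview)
Your proposal is correct and follows essentially the same route as the paper. The paper derives Theorem~\ref{thmAsym2} as a specialisation of the more general Theorem~\ref{corollary2}, whose proof is precisely an induction on $t$ in which one computes $\mathbb{P}(X_{i,t+1}=1,\theta_{i,t+1}\in A)$ by integrating the one-step transition against the law of $\theta_{i,t}$ and then invokes the dual-kernel identity (Corollary~\ref{Cor:dual}) to rewrite the result as an integral against $P^{\ast}(\theta,d\eta)$ --- exactly your ``pass from conditioning on $\theta_{i,t}$ to conditioning on $\theta_{i,t+1}$'' step. The only cosmetic difference is that the paper works with joint measures and extracts the conditional probability as a Radon--Nikod\'ym derivative, whereas you manipulate conditional probabilities directly; the conditional-independence observation you flag (that $X_{i,t+1}\perp\theta_{i,t+1}$ given $(\theta_{i,t},z_i)$, since $X_{i,t+1}$ is generated from $(X_{i,t},\theta_{i,t},z_i)$ while $\theta_{i,t+1}$ is generated from $\theta_{i,t}$ via independent randomness) is implicit in the paper's factorised transition kernel for $(X_{i,t},\theta_{i,t})$.
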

Theorem \ref{thmAsym2} shows that $ q_{t}(\theta,z) $ is the probability of a patch located at $ z $ with characteristic $ \theta $ being occupied at time $ t $ in the large metapopulation. 

\begin{theorem} \label{thmAsym3}
Let $ h $ be a continuous function on $ \Theta \times \Omega $. Under the assumptions of Theorem \ref{thmAsym1}, 
\begin{equation}
\lim_{n\rightarrow\infty} n^{-1} \sum_{i=1}^{n} X_{i,t} h(\theta_{i,t},z_{i}) \stackrel{p}{\rightarrow} \int_\om \left( \int_\th  h(\theta,z) q_{t}(\theta,z) \pi(d\theta) \right)\zeta(z) dz . \label{Thm3:Eq1}
\end{equation}
\end{theorem}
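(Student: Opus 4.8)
The plan is to prove the assertion by the second‑moment method, reading the left‑hand side as $V_n := n^{-1}\sum_{i=1}^{n} X_{i,t}^{n} h(\theta_{i,t},z_{i})$ (for the limiting variables $X_{i,t}$ the same computation applies and collapses to an ordinary weak law, the summands being independent across patches by propagation of chaos). Write $h_i := h(\theta_{i,t},z_i)$ and let $c$ denote the right‑hand side of (\ref{Thm3:Eq1}). Since $(\theta_{i,0},z_i)$, $i=1,\dots,n$, are i.i.d.\ and the dynamics (\ref{MD:Eq2b}) and the initialisation (\ref{thmAsym1:eq1}) are symmetric in the patch index, the triples $(X_{i,t}^{n},\theta_{i,t},z_i)$ are exchangeable, whence (using $(X^n_{1,t})^2=X^n_{1,t}$)
\[
\mathbb{E}[V_n] = \mathbb{E}\bigl[X_{1,t}^{n} h_1\bigr], \qquad \mathbb{E}[V_n^{2}] = \tfrac{1}{n}\,\mathbb{E}\bigl[X_{1,t}^{n} h_1^{2}\bigr] + \Bigl(1-\tfrac{1}{n}\Bigr)\,\mathbb{E}\bigl[X_{1,t}^{n} h_1\, X_{2,t}^{n} h_2\bigr].
\]
It therefore suffices to establish $\mathbb{E}[V_n]\rightarrow c$ and $\mathbb{E}[V_n^{2}]\rightarrow c^{2}$, because then $\operatorname{Var}(V_n)\rightarrow0$ and Chebyshev's inequality yields $V_n\stackrel{p}{\rightarrow}c$.

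For the first moment, Theorem \ref{thmAsym1} gives $X_{1,t}^{n}\stackrel{p}{\rightarrow}X_{1,t}$ on the common probability space, and since $(\theta_{1,t},z_1)$ does not depend on $n$ this upgrades to $X_{1,t}^{n} h_1 \stackrel{p}{\rightarrow} X_{1,t} h_1$. As $X_{1,t}^{n}\in\{0,1\}$ we have the domination $|X_{1,t}^{n} h_1|\leq |h_1|$, which is uniformly integrable (it is bounded when $\th\times\om$ is compact, in particular when $\th$ is finite; in general the square‑integrability of $h$ against $\pi\otimes\zeta$ is among the Appendix~A hypotheses), so $\mathbb{E}[X_{1,t}^{n}h_1]\rightarrow\mathbb{E}[X_{1,t}h_1]$. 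By Theorem \ref{thmAsym2}, $\mathbb{E}[X_{1,t}h_1] = \mathbb{E}[q_t(\theta_{1,t},z_1)\,h_1]$, and since under the standing (stationary landscape) assumptions $(\theta_{1,t},z_1)$ has law $\pi\otimes\zeta$, this equals $\int_\om\bigl(\int_\th h(\theta,z)q_t(\theta,z)\pi(d\theta)\bigr)\zeta(z)\,dz = c$. The same domination gives $\tfrac{1}{n}\mathbb{E}[X_{1,t}^{n}h_1^{2}]\rightarrow0$.

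For the cross term I use the extension of Theorem \ref{thmAsym1} to finite collections of patches (``propagation of chaos'') noted after its statement. Applied to patches $1$ and $2$ it yields the joint convergence $(X_{1,t}^{n},\theta_{1,t},z_1,X_{2,t}^{n},\theta_{2,t},z_2)\stackrel{p}{\rightarrow}(X_{1,t},\theta_{1,t},z_1,X_{2,t},\theta_{2,t},z_2)$, where in the limit $(X_{1,t},\theta_{1,t},z_1)$ and $(X_{2,t},\theta_{2,t},z_2)$ are independent, each distributed as the single‑patch chain of Theorem \ref{thmAsym1}. Hence $X_{1,t}^{n} h_1 X_{2,t}^{n} h_2\stackrel{p}{\rightarrow}X_{1,t} h_1 X_{2,t} h_2$; this product is dominated by $|h_1 h_2|\leq \tfrac{1}{2}(h_1^{2}+h_2^{2})$, which is integrable (the $h_i$ are independent across patches with $\mathbb{E}h_i^{2}=\|h\|_{L^2(\pi\otimes\zeta)}^{2}<\infty$), so uniform integrability gives $\mathbb{E}[X_{1,t}^{n} h_1 X_{2,t}^{n} h_2]\rightarrow \mathbb{E}[X_{1,t}h_1]\,\mathbb{E}[X_{2,t}h_2] = c^{2}$ by the limiting independence together with the first‑moment calculation. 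Combining, $\mathbb{E}[V_n^{2}]\rightarrow c^{2}$, which with $\mathbb{E}[V_n]\rightarrow c$ completes the argument.

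The step I expect to be the main obstacle is the cross term: it requires both the pairwise asymptotic independence of distinct patches (the propagation‑of‑chaos refinement of Theorem \ref{thmAsym1}, which itself rests on the weak law for the connectivity measure) and the uniform‑integrability argument that converts convergence in probability of $X_{1,t}^{n}h_1 X_{2,t}^{n}h_2$ into convergence of expectations. A secondary point, relevant only when $\th$ is non‑compact, is to verify that $h$ is square‑integrable against $\pi\otimes\zeta$; this is automatic in the finite‑state setting and I anticipate it is covered by the Appendix~A assumptions in general.
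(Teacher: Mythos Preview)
Your argument is correct but takes a different route from the paper. In Appendix~A the paper first proves the measure-level result directly: Theorem~\ref{theorem1} establishes $\mu_{n,t}\stackrel{d}{\rightarrow}\mu_t$ by induction on $t$, computing the conditional expectation of $\int h\,d\mu_{n,t+1}$ given $(X_t^n,\theta_t^n,z^n)$, showing it converges via the induction hypothesis, bounding the conditional variance by $n^{-1}\sup|h|^2$, and applying a Chebyshev-type inequality. The identification of $\mu_t$ with $q_t\cdot\sigma$ (Theorems~\ref{theorem2} and~\ref{corollary2}) then gives~(\ref{Thm3:Eq1}). Crucially, in the paper's logical order the single-patch convergence of Theorem~\ref{thmAsym1} is itself a \emph{corollary} of this measure convergence (Corollary~\ref{corollary1}), so your derivation runs in the opposite direction: you deduce the empirical-measure result from the single-patch one plus its two-patch propagation-of-chaos extension, which the paper mentions after Theorem~\ref{thmAsym1} but never actually proves. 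Your exchangeability/second-moment route is valid and more overtly probabilistic, but it imports an ingredient the paper leaves unproved; the paper's direct approach is self-contained and in fact yields Theorems~\ref{thmAsym1} and~\ref{thmAsym3} from the same computation. One minor point: Assumption~(B) already makes $\Theta\times\Omega$ compact, so $h$ is automatically bounded and your caveats about square-integrability in the non-compact case are unnecessary.
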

For a finite state space, the right hand side of (\ref{Thm3:Eq1}) is $ \int_\om \left(\sum_{j} h(j,z) q_{t}(j,z) \pi_{j} \right) \zeta(z) dz $. If $ a(\theta) $ is a continuous function, then we can apply Theorem \ref{thmAsym3} to expression (\ref{MD:Con:Eq1}) to see that the connectivity measure of patch $ i $ at time $ t $ is approximated by $ \psi_{t}(z_{i}) $ for $ n $ sufficiently large. Theorem \ref{thmAsym3} can also be used to show that $  \int_\om \int_\th   q_{t}(\theta,z) \pi(d\theta) \zeta(z) dz $ is a good approximation to the proportion of colonised patches in the metapopulation at time $ t $ when $ n $ is large.

\section{Equilibrium of a metapopulation with phase structure}  \label{Sec:Equil}

When the number of patches is finite, the metapopulation described by (\ref{MD:Eq2b}) goes extinct in finite time with probability one. However, we have seen in the previous section that when the number of patches in the metapopulation is large, its temporal trajectory is closely tracked by the recursion (\ref{Thm2:Eq1})-(\ref{Thm2:Eq2}). Therefore, if $ q_{t} $ is bounded away from zero for all $ t \geq 0 $, then the metapopulation may persist for a long time. On the other hand if $ q_{t} $ goes to zero, then the proportion of occupied patches converges to zero and the metapopulation will go extinct quickly. In this section, we provide a criterion to distinguish between these two scenarios.

Persistence criteria have been established for a number of population models. \citet{Chesson:84} and \citet{MG:01} derive persistence criteria for structured metapopulation models, that is where the size of the local population at each patch is modelled, not just its presence or absence. \citet{OH:01} established persistence criteria for a deterministic version of the incidence function model and for the spatially realistic Levins' model. In their criteria, the quantity determining persistence factorises into two parts, one which is dependent on the species' dispersal kernel and the landscape, and the other which is a function of non-spatial species specific parameters. The criterion for the spatially realistic Levins' model has been extended to allow suitable/unsuitable patch dynamics \cite{DFS:05,XFAS:06}. These criteria are closely related to the basic reproduction number in disease modelling \cite{vdDW:02,DHR:10}.

While our previous results hold under rather weak assumptions, to analyse the recursion (\ref{Thm2:Eq1})-(\ref{Thm2:Eq2}) we need to impose some more restrictive assumptions. Two main assumptions are used in the analysis. The first is that the metapopulation has a phase structure. This assumption implies that the colonisation function has form
\begin{equation}
f(x;\theta) = s( \theta) \bar{f}(x;\theta) \label{Sec4:eq1}
\end{equation}
for some function $ \bar{f}(x;\theta) : [0,\infty) \times \Theta \rightarrow [0,1] $. The second assumption is that for each $ \theta \in\Theta,\ \bar{f}(\cdot,\theta) $ is concave. The assumption of a concave colonisation function essentially excludes the possibility of an Allee-like effect in the metapopulation \cite{CBG:08}. Although it precludes the colonisation function used by Hanski \cite{Hanski:94}, it is sufficiently weak to accommodate a wide range of functions including the one used in \cite{HC:01}.

The persistence criterion for model (\ref{MD:Eq2b}) is expressed in terms of two quantities. The first quantity $ r_{S} $ is based on how the focal species reacts to fluctuations in the landscape characteristic and is defined by
\[
r_{S} :=\sum_{m=1}^{\infty}   \mathbb{E}\left\{\bar{f}^{\prime}(0;\theta_{0}) \left[ \prod_{n=0}^{m-1} s(\theta_{n})\right] a(\theta_{m})\right\},
\]
where $ (\theta_{t}, t\geq 0) $ is the Markov chain with transition kernel $ P $ and stationary distribution $ \pi $. The second quantity depends on how the focal species disperses in the landscape and on the distribution of patch locations. It is given by the spectral radius $ r(\mathcal{M}) $ of the bounded linear operator $ \mathcal{M} : C(\Omega) \rightarrow C(\Omega) $ defined by
\[
\mathcal{M} \phi(z) :=  \int_\om D(z,\tilde{z}) \phi(\tilde{z}) \zeta(\tilde{z})d\tilde{z}, \quad \phi \in C(\Omega).
\]
Note that this quantity is independent of the landscape dynamics.

\begin{theorem} \label{theorem3}
Suppose the assumptions listed in Appendices A and C hold. If $ r_{S}\times r(\mathcal{M}) \leq 1 $, then recursion (\ref{Thm1:Eq1})-(\ref{Thm1:Eq2}) has only the trivial fixed point $ q(\theta,z) = 0 $ for all $(\theta,z) \in \Theta\times\Omega$, and this fixed point is globally stable. If $ r_{S} \times r(\mathcal{M}) > 1 $, then recursion (\ref{Thm2:Eq1})-(\ref{Thm2:Eq2}) has a unique non-zero fixed point. Furthermore, if $ q_{0}(\theta,z) > 0 $ for all $
(\theta,z) \in \Theta\times\Omega $, then $ q_{t} $ converges to this non-zero fixed point.
\end{theorem}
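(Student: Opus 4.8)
The plan is to recast the recursion as the iteration $q_{t+1}=F(q_t)$ of a single nonlinear map $F$ on the order interval $\mathcal{K}=\{q\in C(\Theta\times\Omega):0\le q\le 1\}$ (for finite $\Theta$, $C(\Omega)^{|\Theta|}$), writing $\bar q(z)=\int_\Theta a(\theta)q(\theta,z)\pi(d\theta)$ so that the connectivity is $\psi_t=\mathcal{M}\bar q_t$, and to analyse it by nonlinear Perron--Frobenius theory for monotone, strictly subhomogeneous maps. Using the phase-structure form (\ref{Sec4:eq1}) together with $\bar f(0;\theta)=0$ and $\bar f(\cdot;\theta)$ increasing and concave, one checks directly that: (i) $F$ maps $\mathcal{K}$ into itself and is continuous; (ii) $F$ is order preserving; (iii) $F$ is strictly subhomogeneous, $F(\lambda q)\ge\lambda F(q)$ for $\lambda\in(0,1)$, strictly somewhere whenever $q\not\equiv 0$, from $\bar f(\lambda x;\theta)\ge\lambda\bar f(x;\theta)$ (concavity through the origin) and $1-\lambda q\ge 1-q$; (iv) $F$ is differentiable at $0$ with derivative the bounded positive operator $(Lq)(\theta,z)=\int_\Theta s(\eta)q(\eta,z)P^\ast(\theta,d\eta)+b(\theta)(\mathcal{M}\bar q)(z)$, where $b(\theta)=\int_\Theta s(\eta)\bar f'(0;\eta)P^\ast(\theta,d\eta)$; and, crucially, (v) $F(q)\le Lq$ for every $q\in\mathcal{K}$, strictly somewhere whenever $q\not\equiv 0$, which uses $\bar f(x;\theta)\le\bar f'(0;\theta)x$ and $1-q\le 1$.

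The second step identifies the principal eigenvalue $r(L)$. Since $D,\zeta$ are strictly positive and the landscape chain is irreducible, $L$ is a primitive positive operator, so by the Krein--Rutman theorem $r(L)$ is an eigenvalue with a strictly positive eigenfunction $\phi^\ast$ and a strictly positive adjoint eigenfunctional $\nu$. Because $L$ is rank-one in the $z$-variable, one seeks eigenfunctions $q(\theta,z)=u(\theta)\phi(z)$ with $\mathcal{M}\phi=r(\mathcal{M})\phi$; this reduces the eigenproblem to $(Au)(\theta)+r(\mathcal{M})\langle a,u\rangle_\pi\,b(\theta)=\lambda u(\theta)$, where $(Au)(\theta)=\int_\Theta s(\eta)u(\eta)P^\ast(\theta,d\eta)$ and $\langle f,g\rangle_\pi=\int fg\,d\pi$. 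Inverting this rank-one perturbation shows $r(L)$ is the unique $\lambda>r(A)$ solving $r(\mathcal{M})\sum_{m\ge 0}\lambda^{-m-1}\langle a,A^m b\rangle_\pi=1$. The key computation is that, iterating the dual-kernel identity (\ref{Def:Eq1}) to reverse the time-ordering in the $(m+2)$-fold integral and using invariance of $\pi$, one gets $\langle a,A^m b\rangle_\pi=\mathbb{E}\big[\bar f'(0;\theta_0)\big(\prod_{k=0}^{m}s(\theta_k)\big)a(\theta_{m+1})\big]$ for the stationary $P$-chain $(\theta_t)$, so $\sum_{m\ge 0}\langle a,A^m b\rangle_\pi=r_S$. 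Hence, evaluated at $\lambda=1$, the defining equation reads $r(\mathcal{M})\,r_S=1$; since its left side is strictly decreasing in $\lambda$ on $(r(A),\infty)$, with $r(A)<1$, it follows that $r(L)-1$ has the same sign as $r_S\,r(\mathcal{M})-1$.

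The dichotomy now follows by standard monotone-dynamics arguments. If $r_S\,r(\mathcal{M})\le 1$, i.e. $r(L)\le 1$: iterating (v) gives $q_t\le L^tq_0$, which tends to $0$ when $r(L)<1$; the boundary case $r(L)=1$ follows from (iii), since the non-increasing sequence $\langle\nu,q_t\rangle$ must tend to $0$ --- otherwise a subsequential limit $\bar q\not\equiv 0$ would satisfy $\langle\nu,F(\bar q)\rangle<\langle\nu,L\bar q\rangle=\langle\nu,\bar q\rangle$, contradicting convergence --- and moreover any fixed point obeys $q^\ast=F(q^\ast)\le Lq^\ast$, so pairing with $\nu$ forces $F(q^\ast)=Lq^\ast$ and hence $q^\ast\equiv 0$ by the strictness in (v). If $r_S\,r(\mathcal{M})>1$, i.e. $r(L)>1$: by (iv), for small $\epsilon>0$ one has $F(\epsilon\phi^\ast)\ge\epsilon\phi^\ast$ (a subsolution) while $F(\bone)\le\bone$ (a supersolution), so the monotone orbits $F^t(\epsilon\phi^\ast)\uparrow$ and $F^t(\bone)\downarrow$ converge to positive fixed points; strict subhomogeneity (iii) and compactness of $\Theta\times\Omega$ force the non-zero fixed point to be unique (if $\lambda$ is the largest number with $\lambda q_2^\ast\le q_1^\ast$ for positive fixed points $q_1^\ast,q_2^\ast$, then $\lambda\ge 1$, else $q_1^\ast=F(q_1^\ast)\ge F(\lambda q_2^\ast)>\lambda q_2^\ast$ with a uniform margin, contradicting maximality). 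Finally, any $q_0>0$ is squeezed between $\epsilon\phi^\ast$ and $\bone$, so $q_t=F^t(q_0)\to q^\ast$.

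I expect the main obstacle to be the second step: recognising the series $\sum_{m\ge 0}\langle a,A^m b\rangle_\pi$ as $r_S$ is where the standing hypotheses are used together --- phase structure and concavity to pin down the linearisation $L$, invariance of $\pi$ and the dual-kernel identity for the path-reversal --- and the combinatorial bookkeeping in the multiple integral must be done carefully; alongside this, verifying that $L$ is compact and irreducible enough for the Krein--Rutman reduction to product eigenfunctions, and that the orbits $\{q_t\}$ are precompact in $C(\Theta\times\Omega)$ so the subsequential arguments of the third step apply, are the points that require genuine care, the remaining steps being routine.
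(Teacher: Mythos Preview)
Your approach is broadly sound but takes a genuinely different route from the paper. Rather than analysing $F$ on $C(\Theta\times\Omega)$, the paper \emph{reduces} to an operator on $C^{+}(\Omega)$: for a fixed connectivity profile $\phi\in C^{+}(\Omega)$ the inner recursion in $\theta$ is solved explicitly as
$q^{\phi}_{\infty}(\theta,z)=\sum_{m\ge 1}\mathbb{E}\bigl[f(\phi(z);\theta_m^\ast)\prod_{n=1}^{m-1}\bigl(s(\theta_n^\ast)-f(\phi(z);\theta_n^\ast)\bigr)\,\big|\,\theta_0^\ast=\theta\bigr]$,
and then $\mathcal{H}\phi(z):=\int a(\tilde\theta)D(z,\tilde z)q^\phi_\infty(\tilde\theta,\tilde z)\,\sigma(d\tilde\theta,d\tilde z)$ defines a map $C^+(\Omega)\to C^+(\Omega)$ whose fixed points biject with those of the full recursion. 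The Hirsch--Smith cone limit set trichotomy is applied to $\mathcal{H}$. The linearisation of $\mathcal{H}$ at $0$ is $\mathcal{A}\phi(z)=\int a(\tilde\theta)D(z,\tilde z)\phi(\tilde z)\sum_{m\ge 1}\mathbb{E}\bigl[f'(0;\theta_m^\ast)\prod_{n=1}^{m-1}s(\theta_n^\ast)\,\big|\,\theta_0^\ast=\tilde\theta\bigr]\sigma(d\tilde\theta,d\tilde z)$, and once $\sigma=\pi\times\zeta$ the $\tilde\theta$- and $\tilde z$-integrals \emph{separate}, giving $\mathcal{A}=r_S\,\mathcal{M}$ immediately after one time-reversal. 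No rank-one-perturbation eigenvalue computation is needed; $r(\mathcal{A})=r_S\,r(\mathcal{M})$ falls out.

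The paper's reduction buys two things you have to work for. First, $\mathcal{H}$ is order-compact on $C^+(\Omega)$ because it factors through the smoothing kernel $D$, whereas your $L$ on $C(\Theta\times\Omega)$ is not compact (the $A$-part acts fibrewise in $z$); you would need to argue via essential spectral radius $r(A)<1$ to isolate $r(L)$ and then justify that the Krein--Rutman eigenfunction has product form. Second, and this is a small gap in your write-up: $F$ is \emph{not} strictly subhomogeneous at every point. At any $(\theta,z)$ with $P^\ast(\theta,\Theta_1)=1$ the colonisation term vanishes and $F(\lambda q)(\theta,z)=\lambda F(q)(\theta,z)$ exactly, so the ``uniform margin'' in your maximal-$\lambda$ uniqueness argument fails as stated. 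This is repairable (iterate $F$, or run the argument through the functional $\phi\mapsto\mathcal{M}\bar q$ as the paper implicitly does), but it is precisely the wrinkle that the reduction to $C^+(\Omega)$ removes: strong sublinearity and strong positivity of $\mathcal{H}$ hold \emph{everywhere} on $\Omega$ under Assumption~(J), because the explicit series for $q^\phi_\infty$ already averages over all future visits to $\Theta\setminus\Theta_1$.
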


The assumptions from Appendix A were explained following Theorem \ref{thmAsym1} in the context of a finite state space for the landscape dynamics. The two main assumptions from Appendix C have already been mentioned at the beginning of this section, namely that the colonisation function has the form (\ref{Sec4:eq1}) and for each $ \theta \in\Theta,\ \bar{f}(\cdot,\theta) $ is concave. When $ \Theta $ is finite, the two other assumptions simplify. One is that $ s(i) < 1 $ for each $ i \in \Theta $. For the final assumption define $ \Theta_{1} := \{j \in \Theta : s(j)f(x,j) = 0  \mbox{ for all } x\in [0,\infty) \}$. The set $ \Theta_{1} $ comprises those states of the landscape characteristic for which a patch cannot be colonised. The final assumption is that there is an $ i \in \Theta \backslash \Theta_{1} $ and $ j \in \Theta $ such that $ a(j) > 0 $ and $ P_{ij} > 0 $. This means that, given the landscape characteristic was in a state that allowed the patch to be colonised, there is positive probability that at the next time step the landscape characteristic will be in a state which makes a positive contribution to the colonisation of the other patches. 

Theorem \ref{theorem3} is an example of the kind of dichotomy observed in other metapopulation models not displaying an Allee-like effect. Without the assumption that $ \bar{f} $ is concave, the condition $ r_{S}\times r(\mathcal{M}) > 1 $ still implies the existence of a non-zero fixed point, but there may be several non-zero fixed points in this case. Similarly, if $ \bar{f} $ is only locally concave in a neighbourhood of zero and $ r_{S}\times r(\mathcal{M})  < 1 $,  then the extinction fixed point is still locally stable. That is, iterations of the recursion (\ref{Thm1:Eq1})-(\ref{Thm1:Eq2}) will converge to the fixed point $ q(\theta,z) = 0 $ for all $(\theta,z) \in \Theta\times\Omega$ for sufficiently small initial conditions. This follows from the monotonicity property of the recursion (see the proof of Theorem \ref{thm:Conv}). However, a non-zero fixed point may also exist in this case.

Despite the significant differences between the spatially realistic Levins' model with dynamic landscape studied in \cite{DFS:05,XFAS:06} and the model studied here, there are still some important similarities in their persistence criteria. Firstly, the quantity determining persistence factorises into the product of two terms; one determined by the dispersal kernel of the species and the location of habitat patches, and the other by the species reaction to landscape dynamics and other non-spatial factors. If we suppose that all patches have constant area, so $ a(\theta) = \bar{a} $ for all $ \theta \in \Theta $, then the factor depending on the species' dispersal kernel is the leading eigenvalue of the matrix $ M $ with elements $ M_{ij} := D(z_{i},z_{j}) $. When the patch locations are a sample of independent random variables from the probability density function $ \zeta $, the leading eigenvalue of $ M $ can be shown, under certain conditions, to converge to the spectral radius of $ \mathcal{M} $ as $ n \rightarrow \infty $. On restricting the model so that $ \bar{f}(x;\theta) = \bar{f}(x) $, further similarities appear. We can now write
\[
r_{S} = \bar{a} \bar{f}^{\prime}(0)  \sum_{m=0}^{\infty}   \mathbb{E}\left(\prod_{n=0}^{m-1}s(\theta_{n})\right).
\]
Noting that $ \mathbb{E}(\prod_{n=0}^{m-1}s(\theta_{n})) $ is the probability that a local population survives at least $ m $ extinction phases, we see that the quantity $ \sum_{m=0}^{\infty}  \mathbb{E}(\prod_{n=0}^{m-1}s(\theta_{n})) $ is the expected life span of a local population. For the spatially realistic Levins' model with dynamic landscape, the quantity determining persistence has a factor $ (e + \beta)^{-1} $, where $ e $ is the extinction rate and $ \beta $ is the rate of patch destruction. Therefore, the factor $ (e + \beta)^{-1} $ gives the expected life span of the local population. These similarities suggest a certain amount of robustness of the conclusions to the particular modelling choices made. 

Restricting the model to $ \bar{f}(x;\theta) = \bar{f}(x) $ also enables us to have a clearer understanding of how the landscape dynamics affect the equilibrium level of the metapopulation.

\begin{theorem} \label{thm:LandDist}
Suppose $ \bar{f}(x;\theta) = \bar{f}(x) $. Let $ q^{\ast} $ be a fixed point of recursion (\ref{Thm2:Eq1})-(\ref{Thm2:Eq2}). The quantity $ \int_\th a(\theta) q^{\ast}(\theta,z) \pi(d\theta) $ depends on $ (\pi, P) $ only through the sequence 
\begin{equation} 
\mathbb{E}\left(a(\theta_{m+1})\prod_{n=1}^{m}s(\theta_{n})\right), \quad m \geq 1.  \label{LandDist:Eq1}
\end{equation}
\end{theorem}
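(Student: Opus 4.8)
The plan is to unfold the fixed-point equation in the landscape characteristic and then use the duality relation (\ref{Def:Eq1}) to trade expectations along the dual chain for expectations along the stationary $P$-chain. Fix a fixed point $q^{\ast}$ and set $G(z):=\int_{\Theta}a(\theta)q^{\ast}(\theta,z)\pi(d\theta)$ --- the quantity of interest --- together with $\psi^{\ast}(z):=\int_{\Omega}D(z,\tilde z)G(\tilde z)\zeta(\tilde z)\,d\tilde z$, the value taken by (\ref{Thm2:Eq1}) at the fixed point, and $c(z):=\bar{f}(\psi^{\ast}(z))$. Under the phase structure (\ref{Sec4:eq1}) with $\bar{f}(x;\theta)=\bar{f}(x)$, equation (\ref{Thm2:Eq2}) at the fixed point collapses, after collecting terms, to
\begin{equation*}
q^{\ast}(\theta,z)=\int_{\Theta}s(\eta)\bigl[c(z)+(1-c(z))\,q^{\ast}(\eta,z)\bigr]P^{\ast}(\theta,d\eta).
\end{equation*}

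First I would iterate this identity in its first argument. Writing $\widehat{P}^{\ast}(\theta,d\eta):=s(\eta)P^{\ast}(\theta,d\eta)$ for the associated sub-probability kernel, repeated substitution gives $q^{\ast}(\cdot,z)=c(z)\sum_{k\ge0}(1-c(z))^{k}\,(\widehat{P}^{\ast})^{k+1}\mathbf{1}$, the remainder term vanishing because $(\widehat{P}^{\ast})^{N}\mathbf{1}\to0$ under the control on $s$ imposed in Appendix C. Multiplying by $a(\theta)$ and integrating against $\pi$ (dominated convergence justifying the sum--integral interchange) identifies $\int_{\Theta}a(\theta)(\widehat{P}^{\ast})^{k+1}\mathbf{1}(\theta)\,\pi(d\theta)$ with the expectation $\mathbb{E}^{\ast}\bigl[a(\theta^{\ast}_{0})\prod_{j=1}^{k+1}s(\theta^{\ast}_{j})\bigr]$, where $(\theta^{\ast}_{j})_{j\ge0}$ is the stationary Markov chain with transition kernel $P^{\ast}$.

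The key step is to recognise this as a time-reversed $P$-chain expectation. Iterating (\ref{Def:Eq1}) along a path shows that the law of $(\theta^{\ast}_{0},\dots,\theta^{\ast}_{k+1})$ coincides with the law of $(\theta_{k+1},\dots,\theta_{0})$ for the stationary $P$-chain, so $\mathbb{E}^{\ast}\bigl[a(\theta^{\ast}_{0})\prod_{j=1}^{k+1}s(\theta^{\ast}_{j})\bigr]=\mathbb{E}\bigl[a(\theta_{k+1})\prod_{j=0}^{k}s(\theta_{j})\bigr]$, and a shift of index using stationarity of the $P$-chain rewrites this as $\mathbb{E}\bigl(a(\theta_{m+1})\prod_{n=1}^{m}s(\theta_{n})\bigr)=:\beta_{m}$ with $m=k+1$. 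We thus obtain the closed relation
\begin{equation*}
G(z)=c(z)\sum_{m\ge1}(1-c(z))^{m-1}\beta_{m},\qquad c(z)=\bar{f}\Bigl(\int_{\Omega}D(z,\tilde z)G(\tilde z)\zeta(\tilde z)\,d\tilde z\Bigr),
\end{equation*}
in which $(\pi,P)$ enters only through the sequence $(\beta_{m})_{m\ge1}$, as claimed.

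It remains to note that this self-consistent equation pins $G$ down given $(\beta_{m})$: $G\equiv0$ always solves it and corresponds to $q^{\ast}\equiv0$, while in the complementary case Theorem \ref{theorem3} provides a unique non-zero fixed point of the recursion, and since $r_{S}=\bar{f}^{\prime}(0)\sum_{m\ge1}\beta_{m}$ likewise depends on $(\pi,P)$ only through $(\beta_{m})$, the dichotomy of Theorem \ref{theorem3} (governed by $r_{S}\times r(\mathcal{M})$), and hence the value of $G$, is determined by $(\beta_{m})$ and the landscape-independent data $\bar{f}$, $D$, $\zeta$. I expect the main obstacle to be making the path-level duality precise --- (\ref{Def:Eq1}) is only a one-step statement, so one must verify carefully that its iteration identifies the reversed $P^{\ast}$-path measure with the $P$-path measure; the remaining technicalities (convergence of the geometric-type series, vanishing of the remainder, the sum--integral interchange) all reduce to the boundedness hypotheses on $s$ and $a$ from Appendix C.
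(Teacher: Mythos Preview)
Your argument is correct and follows essentially the same route as the paper: both unfold the fixed-point equation into the geometric-type series $q^{\ast}(\theta,z)=\sum_{m\ge1}\bar f(\psi^{\ast}(z))(1-\bar f(\psi^{\ast}(z)))^{m-1}\mathbb{E}\bigl[\prod_{n=1}^{m}s(\theta^{\ast}_{n})\mid\theta^{\ast}_{0}=\theta\bigr]$, integrate against $a(\theta)\pi(d\theta)$, and use duality to rewrite the result in terms of the $P$-chain moments (\ref{LandDist:Eq1}), so that the induced fixed-point map for the connectivity depends on $(\pi,P)$ only through that sequence. The only cosmetic differences are that the paper quotes the series representation from the already-established formula (\ref{AppC:Eq2}) and packages the conclusion via the operator $\mathcal{H}$ of (\ref{AppC:Eq3}), whereas you iterate the recursion directly and spell out the path-level time-reversal step that the paper leaves implicit in passing from (\ref{AppC:Eq2b}) to (\ref{AppC:Eq2c}).
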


Theorem \ref{thm:LandDist} states that, when the metapopulation is in equilibrium, the expected area taken up by colonised patches, $ \int_\th a(\theta) q^{\ast}(\theta,z) \pi(d\theta) $,  depends on the landscape dynamics only through expected future contributions to the connectivity measure of a colonised patch during the local population's life span.  This future contribution to the connectivity measure may be interpreted as the number of propagules produced. If all patches have constant area, then the sequence (\ref{LandDist:Eq1}) is a multiple of the tail probabilities of the life span of the local population. In that case, the probability that a patch at location $ z $ is colonised at equilibrium depends on the landscape dynamics only through the distribution of the local population's life span.

\begin{theorem} \label{thm:SO}
Suppose $ \bar{f}(x;\theta) = \bar{f}(x) $.  Let $ (\theta_{t}, t\geq0 ) $ and $ (\tilde{\theta}_{t}, t \geq 0 ) $ denoted the Markov chains generated by  $ (\pi,P) $ and $ (\tilde{\pi},\tilde{P}) $, respectively,  and let $ q^{\ast} $ and $ \tilde{q}^{\ast} $ be the fixed points of the respective recursions (\ref{Thm1:Eq1})-(\ref{Thm1:Eq2}).  If 
\begin{equation}
\mathbb{E}\left(a(\tilde{\theta}_{m+1})\prod_{n=1}^{m}s(\tilde{\theta}_{n})\right) \leq \mathbb{E}\left(a(\theta_{m+1})\prod_{n=1}^{m}s(\theta_{n})\right), \label{Thm4:Eq1}
\end{equation}
for all $ m \geq 1 $, then
\begin{equation}
\int_\th a(\tilde{\theta}) \tilde{q}^{\ast} (\theta,z) \tilde{\pi}(d\theta) \leq   \int_\th a(\theta) q^{\ast}(\theta,z) \pi(d\theta),  \label{Thm4:Eq2}
\end{equation}
for all $ z \in \Omega $.
\end{theorem}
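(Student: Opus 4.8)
The plan is to reduce (\ref{Thm4:Eq2}) to a monotone fixed-point comparison for the scalar-valued quantity $c^{\ast}(z):=\int_{\th}a(\theta)q^{\ast}(\theta,z)\pi(d\theta)$ (and its tilde analogue $\tilde c^{\ast}$), which is exactly what appears on both sides of (\ref{Thm4:Eq2}). First I would record the reduction that already underlies Theorem \ref{thm:LandDist}: when $\bar f(x;\theta)=\bar f(x)$, substituting the phase form $f(x;\theta)=s(\theta)\bar f(x)$ into (\ref{Thm2:Eq2}) shows that, at a fixed $z$, the recursion collapses to the affine recursion $q_{t+1}(\cdot,z)=\bar f(\psi_t(z))\,\mathcal K^{\ast}\mathbf 1+(1-\bar f(\psi_t(z)))\,\mathcal K^{\ast}q_t(\cdot,z)$, where $\mathcal K^{\ast}$ is the sub-Markovian kernel $\mathcal K^{\ast}g:=\int_{\th}s(\eta)g(\eta)P^{\ast}(\cdot,d\eta)$, $\psi_t(z)=\mathcal M c_t(z)$ and $c_t(z)=\int_{\th}a(\theta)q_t(\theta,z)\pi(d\theta)$. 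Solving the fixed-point equation gives $q^{\ast}(\cdot,z)=\bar f(\psi^{\ast}(z))\sum_{j\ge0}(1-\bar f(\psi^{\ast}(z)))^{j}(\mathcal K^{\ast})^{j+1}\mathbf 1$ (the Neumann series converges since $s<1$ makes the spectral radius of $(1-\lambda)\mathcal K^{\ast}$ strictly less than one), so integrating against $a(\theta)\,\pi(d\theta)$ and using $\int_{\th}a(\theta)(\mathcal K^{\ast})^{m}\mathbf 1(\theta)\pi(d\theta)=g_m$ (obtained by reversing the stationary chain via (\ref{Def:Eq1}) iterated, with $g_m$ the sequence (\ref{LandDist:Eq1})), one finds that $c^{\ast}$ solves $c=\Psi_g(c)$, where $\Psi_g(c)(z):=G_g\big(\bar f(\mathcal M c(z))\big)$ and $G_g(\lambda):=\sum_{m\ge1}\lambda(1-\lambda)^{m-1}g_m$. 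Conversely the same formula turns any fixed point of $\Psi_g$ into a fixed point of the $q$-recursion, so by Theorem \ref{theorem3} the only fixed points of $\Psi_g$ in $C(\Omega)_{+}$ are $0$ and $c^{\ast}$ (with $c^{\ast}\ne0$ precisely when $r_S\,r(\mathcal M)>1$).

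Next I would prove two monotonicity statements. The key one is that $\{g_m\}_{m\ge1}$ is nonincreasing: from the representation $g_m=\int_{\th}a(\theta)(\mathcal K^{\ast})^{m}\mathbf 1(\theta)\pi(d\theta)$, since $\mathcal K^{\ast}$ is positivity-preserving and $\mathcal K^{\ast}\mathbf 1\le\mathbf 1$, the functions $(\mathcal K^{\ast})^{m}\mathbf 1$ decrease pointwise in $m$, and $a\ge0$. Writing $G_g(\lambda)=\mathbb E[g_{M_\lambda}]$ with $M_\lambda$ geometric of parameter $\lambda$ (hence stochastically nonincreasing in $\lambda$), monotonicity of $\{g_m\}$ forces $G_g$ to be nondecreasing on $[0,1]$; together with positivity of $\mathcal M$ and monotonicity of $\bar f$ this makes $\Psi_g$ order-preserving on $C(\Omega)_{+}$ (and likewise $\Psi_{\tilde g}$). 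The second statement is immediate: hypothesis (\ref{Thm4:Eq1}), i.e. $\tilde g_m\le g_m$ for every $m\ge1$, together with nonnegativity of the weights $\lambda(1-\lambda)^{m-1}$, gives $G_{\tilde g}\le G_g$ on $[0,1]$, hence $\Psi_{\tilde g}(c)\le\Psi_g(c)$ for all $c\in C(\Omega)_{+}$.

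To finish, observe that $\tilde c^{\ast}$ is a fixed point of $\Psi_{\tilde g}$, so $\tilde c^{\ast}=\Psi_{\tilde g}(\tilde c^{\ast})\le\Psi_g(\tilde c^{\ast})$: it is a subsolution for $\Psi_g$. Since $\Psi_g$ is order-preserving, the iterates $\Psi_g^{t}(\tilde c^{\ast})$ increase in $t$ and are bounded above by $g_1\mathbf 1$ (as $\Psi_g(c)\le G_g(1)\mathbf 1=g_1\mathbf 1$), so they converge and, by continuity of $\Psi_g$, the limit is a fixed point $c_\infty\ge\tilde c^{\ast}$. If $\tilde c^{\ast}\not\equiv 0$ then $c_\infty$ is a nonzero fixed point, hence $c_\infty=c^{\ast}$ and $\tilde c^{\ast}\le c^{\ast}$; if $\tilde c^{\ast}\equiv 0$ the inequality is trivial. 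The step I expect to require the most care is the first-paragraph reduction — checking that the only coupling between locations in (\ref{Thm2:Eq1})--(\ref{Thm2:Eq2}) enters through the scalar $\mathcal M c_t(z)$, and that the classification of fixed points of $\Psi_g$ really follows from Theorem \ref{theorem3} — while the conceptual crux is the short lemma that $\{g_m\}$ is nonincreasing, since this is what upgrades the trivial bound $G_{\tilde g}\le G_g$ into genuine monotonicity of $\Psi_g$. A more computational alternative, if one prefers to avoid the operator $\Psi_g$, is to unroll the $q$-recursion from $q_0\equiv\mathbf 1$, obtaining $c_t(z)=\sum_{k=1}^{t}w^{(t)}_k(z)\,g_k$ with nonnegative weights summing to one that are ``first-success'' probabilities built from $\bar f(\mathcal M c_1(z)),\dots,\bar f(\mathcal M c_{t-1}(z))$, and to induct on $t$: shrinking those probabilities shifts the first-success index stochastically upward, which lowers $\sum_k w^{(t)}_k g_k$ because $\{g_k\}$ is nonincreasing, giving $\tilde c_t\le c_t$ for all $t$, and then one passes to the limit using Theorem \ref{theorem3} (noting $r_S=\bar f'(0)\sum_{m\ge1}g_m$, so (\ref{Thm4:Eq1}) also yields $\tilde r_S\le r_S$ and the persistence regimes stay nested).
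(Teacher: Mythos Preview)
Your argument is correct and runs on the same rails as the paper's: both reduce, via the phase structure and the assumption $\bar f(x;\theta)=\bar f(x)$, to a scalar fixed-point problem governed by the function $G_g(\lambda)=\sum_{m\ge1}\lambda(1-\lambda)^{m-1}g_m$ with $g_m=\mathbb E\big(a(\theta_{m+1})\prod_{n=1}^{m}s(\theta_n)\big)$, compare the two operators using hypothesis (\ref{Thm4:Eq1}), and finish by a monotone fixed-point comparison together with the uniqueness furnished by Theorem~\ref{thm:CLST}. The organisational difference is that the paper carries out the fixed-point comparison at the level of the connectivity $\phi^\ast$ and the operator $\mathcal H=\mathcal M\circ G_g\circ\bar f$, obtains $\tilde\phi^\ast\le\phi^\ast$ via Schauder, and then transfers this to (\ref{Thm4:Eq2}) by an Abel summation $\sum_m\lambda(1-\lambda)^{m-1}g_m=\sum_r(1-(1-\lambda)^r)(g_r-g_{r+1})$; you instead work directly with $c^\ast=\int_\Theta a\,q^\ast\,d\pi$ and the conjugate operator $\Psi_g=G_g\circ\bar f\circ\mathcal M$, and replace Schauder by monotone iteration from the subsolution $\tilde c^\ast$. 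One point worth noting: you isolate and prove the short lemma that $\{g_m\}_{m\ge1}$ is nonincreasing (via $(\mathcal K^\ast)^m\mathbf 1\downarrow$), whereas the paper uses this fact tacitly when it treats the Abel coefficients $\alpha_r=g_r-g_{r+1}$ as nonnegative in the final chain of inequalities; making this explicit, as you do, is a genuine clarification.
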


Theorem \ref{thm:SO} seems intuitively obvious; landscape dynamics which yield greater expected contributions to the connectivity measure result in greater expected area taken up by the colonised patches in equilibrium. What is important is that inequality (\ref{Thm4:Eq1}) must hold for the entire sequence so the expected contribution must be greater at all future times during the local population's life span. If all patches have constant area, then inequality (\ref{Thm4:Eq1}) reduces to stating that the  life span of the local population is larger, in the usual stochastic ordering \cite[Section 1.A]{SS:07}, under $ (\pi,P) $ than under $ (\tilde{\pi},\tilde{P}) $. 

Our final result identifies the Markov chain which makes the life span of the local population the longest for a given stationary distribution $ \pi $.

\begin{corollary} \label{thm:Max}
If the Markov chain $ (\theta_{t}, t\geq 0 ) $ is stationary, then 
\begin{equation}
\mathbb{E}\left(\prod_{n=0}^{m}s(\theta_{n})\right) \leq \mathbb{E}(s(\theta_{0})^{m+1}). \label{Max:Eq1}
\end{equation}
\end{corollary}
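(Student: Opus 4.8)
The plan is to prove this directly from stationarity together with an elementary convexity (Hölder) inequality, using no finer structure of the chain at all. The first step is the observation that, because $(\theta_{t}, t\geq 0)$ is stationary, every $\theta_{n}$ for $n=0,1,\ldots,m$ has the same marginal law (namely the stationary distribution $\pi$). Hence $s(\theta_{0}),\ldots,s(\theta_{m})$ are identically distributed random variables taking values in $[0,1]$, and in particular $\mathbb{E}\big(s(\theta_{n})^{m+1}\big)=\mathbb{E}\big(s(\theta_{0})^{m+1}\big)$ for each $n$. No Markov property, irreducibility, or reversibility is needed beyond this.

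The second step is to apply the generalized H\"older inequality to the product $\prod_{n=0}^{m} s(\theta_{n})$ with all $m+1$ exponents equal to $m+1$, which is admissible since $\sum_{n=0}^{m}(m+1)^{-1}=1$. This gives
\[
\mathbb{E}\left(\prod_{n=0}^{m} s(\theta_{n})\right) \;\leq\; \prod_{n=0}^{m} \left(\mathbb{E}\left(s(\theta_{n})^{m+1}\right)\right)^{1/(m+1)} \;=\; \mathbb{E}\left(s(\theta_{0})^{m+1}\right),
\]
where the last equality uses the identical marginals from Step~1. This is precisely \eqref{Max:Eq1}. An equivalent route avoiding H\"older: the arithmetic--geometric mean inequality gives $\prod_{n=0}^{m} s(\theta_{n}) \leq \big((m+1)^{-1}\sum_{n=0}^{m} s(\theta_{n})\big)^{m+1}$, and convexity of $x\mapsto x^{m+1}$ on $[0,\infty)$ yields $\big((m+1)^{-1}\sum_{n=0}^{m} s(\theta_{n})\big)^{m+1} \leq (m+1)^{-1}\sum_{n=0}^{m} s(\theta_{n})^{m+1}$; taking expectations and again invoking the identical marginals gives the same bound.

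There is essentially no obstacle here; the only thing one must notice is that the statement is really an assertion about identically distributed $[0,1]$-valued random variables, together with the fact that equality is attained by the degenerate chain $\theta_{t}\equiv\theta_{0}$ (the static landscape), which legitimately has any prescribed $\pi$ as its stationary distribution and for which $\prod_{n=0}^{m}s(\theta_{n})=s(\theta_{0})^{m+1}$. Combined with the discussion preceding the corollary (and, in the constant-area case, with Theorem~\ref{thm:SO} applied to the pair consisting of a general stationary chain and the static chain), this identifies the static landscape as the one maximising every tail probability of the local population's life span, and hence the equilibrium occupancy.
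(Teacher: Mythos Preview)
Your proof is correct and follows essentially the same route as the paper: apply the generalized H\"older inequality with equal exponents $m+1$, then use stationarity to equate all the marginal moments $\mathbb{E}\big(s(\theta_{n})^{m+1}\big)$. The additional AM--GM argument and the remark on the equality case (the static chain) are fine embellishments but not needed for the statement itself.
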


This implies that among Markov chains with the same stationary distribution $ \pi $, the one which makes the life span of the local population the greatest is the one for which $ \theta_{0} = \theta_{t} $ for all $ t \geq 0 $, that is the static landscape. This result with Theorem \ref{thm:SO} implies that a static landscape maximises the probability of a patch being occupied when the patch areas are constant. We have not been able to show a similar result for the sequence (\ref{LandDist:Eq1}). The landscape dynamics which maximises the sequence (\ref{LandDist:Eq1}) would seem to be affected by how patch areas and local survival probabilities depend on the landscape dynamics. This is to be expected since the model incorporates the possibility of pulsed dispersal.

To conclude this section, we perform some simulations comparing the proportion of time a patch in the metapopulation is occupied with the limiting probability of patch occupancy determined by $ \int_\th q^{\ast}(\theta,z) \pi(d\theta) $ where $ q^{\ast}(\theta,z) $ is the fixed point of the recursion (\ref{Thm2:Eq1}) - (\ref{Thm2:Eq2}). All simulations are performed with constant patch areas $ a(\theta) = 10 $ for all $ \theta \in \Theta $, $ \bar{f}(x) = 1-\exp(-x) $ and $ D(z,\tilde{z}) = \exp(-\|z-\tilde{z}\|)$. To facilitate the presentation, we assume a one dimensional landscape. The patch locations are sampled from the uniform distribution on $ [0,10] $.

The survival probabilities $ s_{t} = s(\theta_{t}) $ are modelled by the Markov chain studied in \citet{MB:15}. This Markov chain is defined by 
\begin{equation} \label{DLM:Eq2}
s_{t+1} = \left\{ \begin{array}{ll}
s_{t}(1-L_{t+1}), & \mbox{with probability } p(s_{t}), \\
s_{t} + (1-s_{t}) R_{t+1}, & \mbox{with probability } 1 - p(s_{t}),
\end{array} \right.
\end{equation}
where $ p:[0,1] \rightarrow [0,1] $, and $\{L_{t}\}$ and $ \{R_{t}\} $ are sequences of independent and identically distributed random variables on $[0,1]$ with distributions $ F_{L} $ and $ F_{R} $, respectively. Two sample paths are plotted in Figure~\ref{Fig1} for two choices of $ F_{L}$ and $ F_{R}$ with $ p(x) = 10 (x-0.9) \mathbb{I}(x>0.9) $. Although we do not prove that this process is not reversible, the plotted sample paths strongly suggest that it is not. Specifically,  the process would not look the same it time were reversed since the large downward jumps in the trajectory would appear as large upwards jumps if time were reversed.

This Markov chain can provide a reasonable model of changes in habitat quality due to disturbance followed by a slow restoration as follows. Immediately after a disturbance, the habitat is low quality so the local survival probability is small. As time progresses, the habitat recovers and the local survival probability increases until some maximal level is reached or the habitat is again disturbed. To capture the rapid decrease in the survival probability following disturbance, $ F_{L} $ should have considerable mass near one.  The relatively slow recovery of the habitat means that $ F_{R} $ should have most of its mass near zero. The function $ p $ reflects the probability of disturbance for a given survival probability and might reasonably be assumed to be increasing. 

\begin{figure} 
\includegraphics[width=7.5cm]{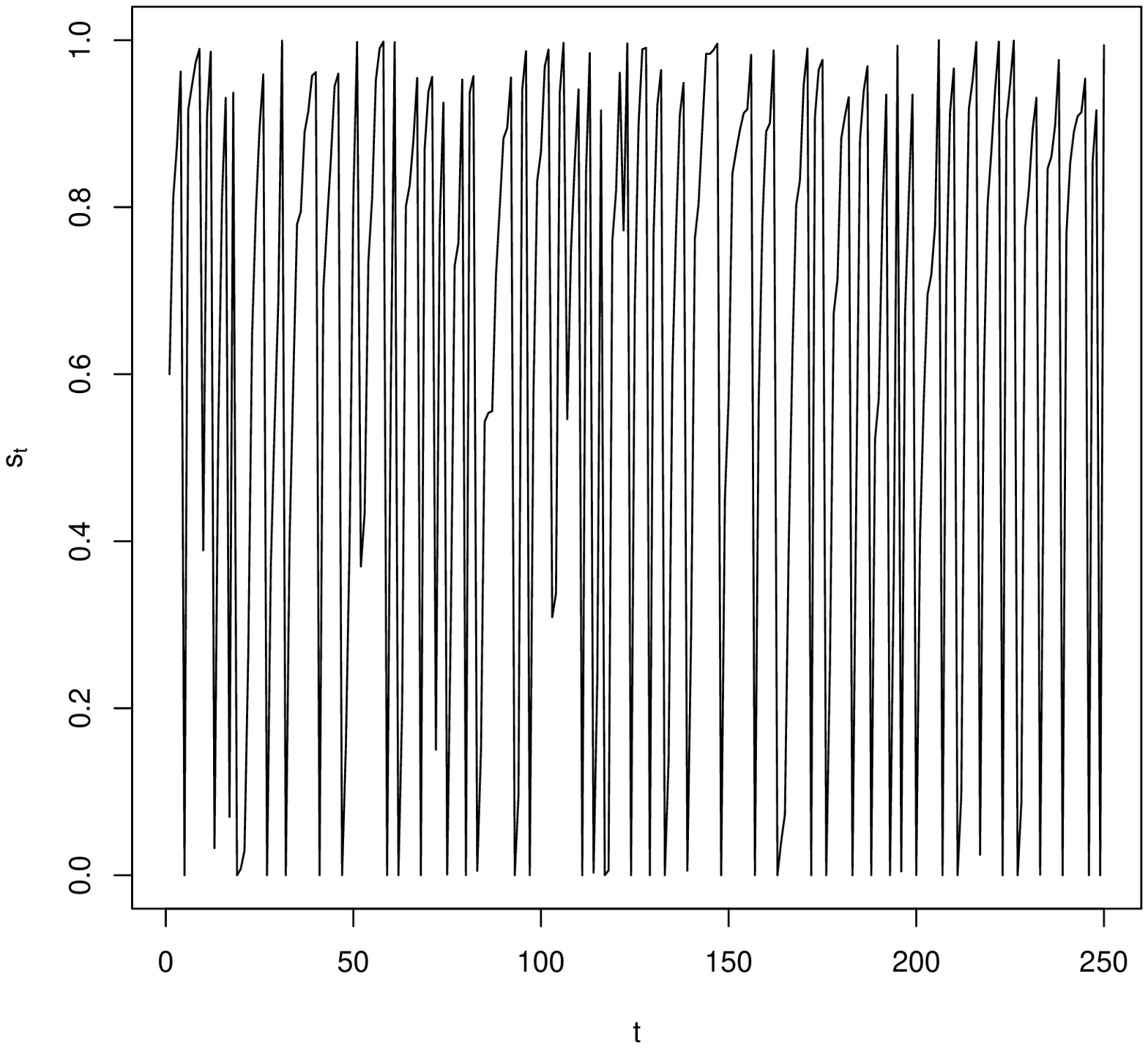}
\includegraphics[width=7.5cm]{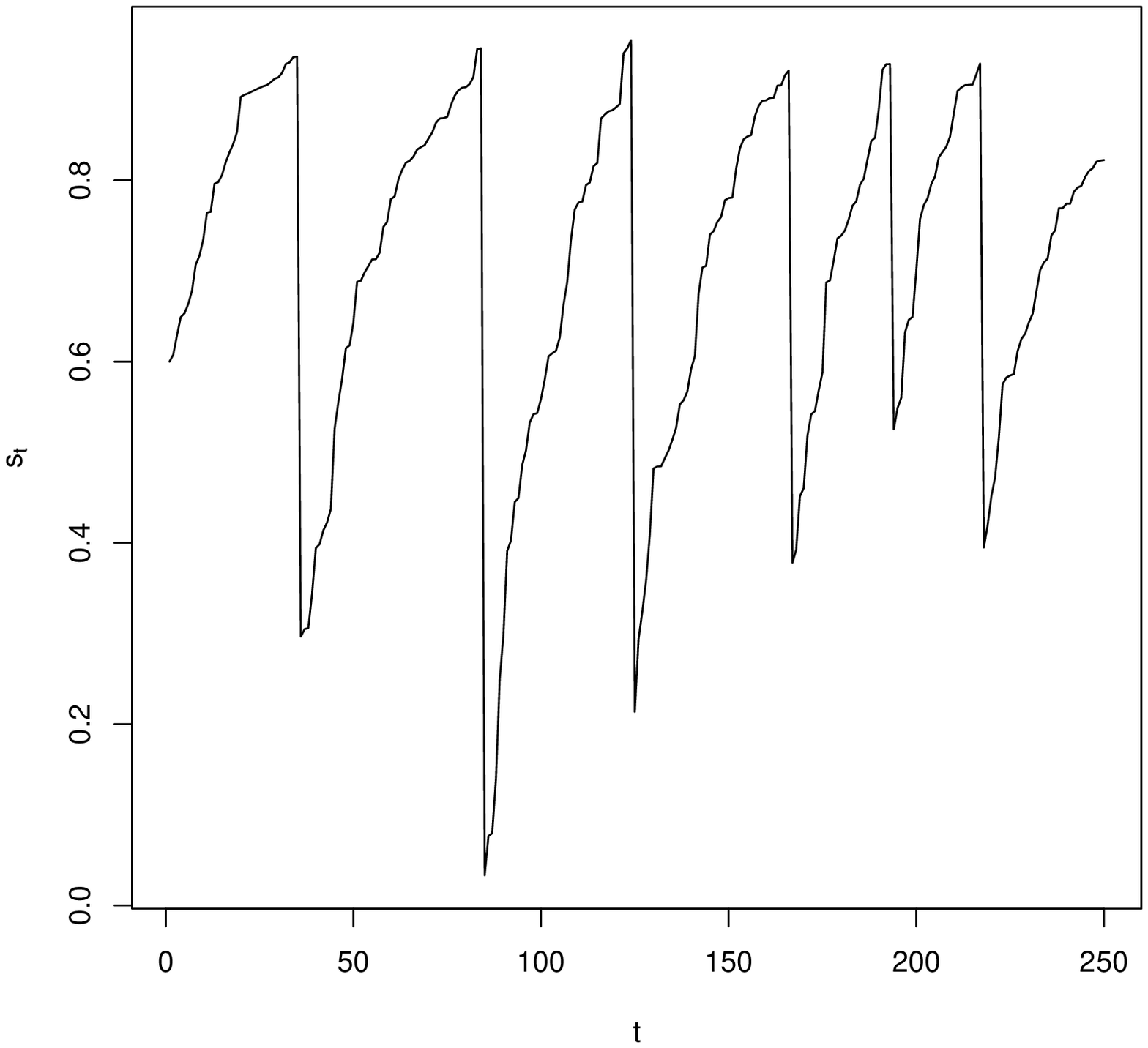}
\caption{Simulated path of survival probabilities. Left: $ L_{t} \sim \mbox{Beta}(1,0.1) $, $ R_{t} \sim \mbox{Beta}(1,1) $. Right: $ L_{t} \sim \mbox{Beta}(1,1) $, $ R_{t} \sim \mbox{Beta}(1,20) $.}\label{Fig1}
\end{figure}

We simulate metapopulations with 50 and 250 habitat patches for $ 10^5 $ time steps with the two survival processes depicted in Figure~\ref{Fig1} and compute the proportion of time each patch is occupied. Treating the resulting time series as stationary, the standard error on the estimated proportions was estimated to be no more than $ 0.003 $ for all simulations. This is compared to the fixed point of the deterministic recursion. Details of how the fixed point is calculated are given in Appendix C. The results are plotted in Figures \ref{Fig2}. As we expect, the fixed point of the deterministic recursion provides a better approximation as the number of patches in the metapopulation increases. It appears that the deterministic recursion
has a greater tendency to over-estimate the proportion of time the patch is occupied than to under-estimate it. Furthermore the deterministic recursion generally provides a better approximation for patches in the center of the metapopulation than those on the periphery. 

\begin{figure}
\includegraphics[width=7.5cm]{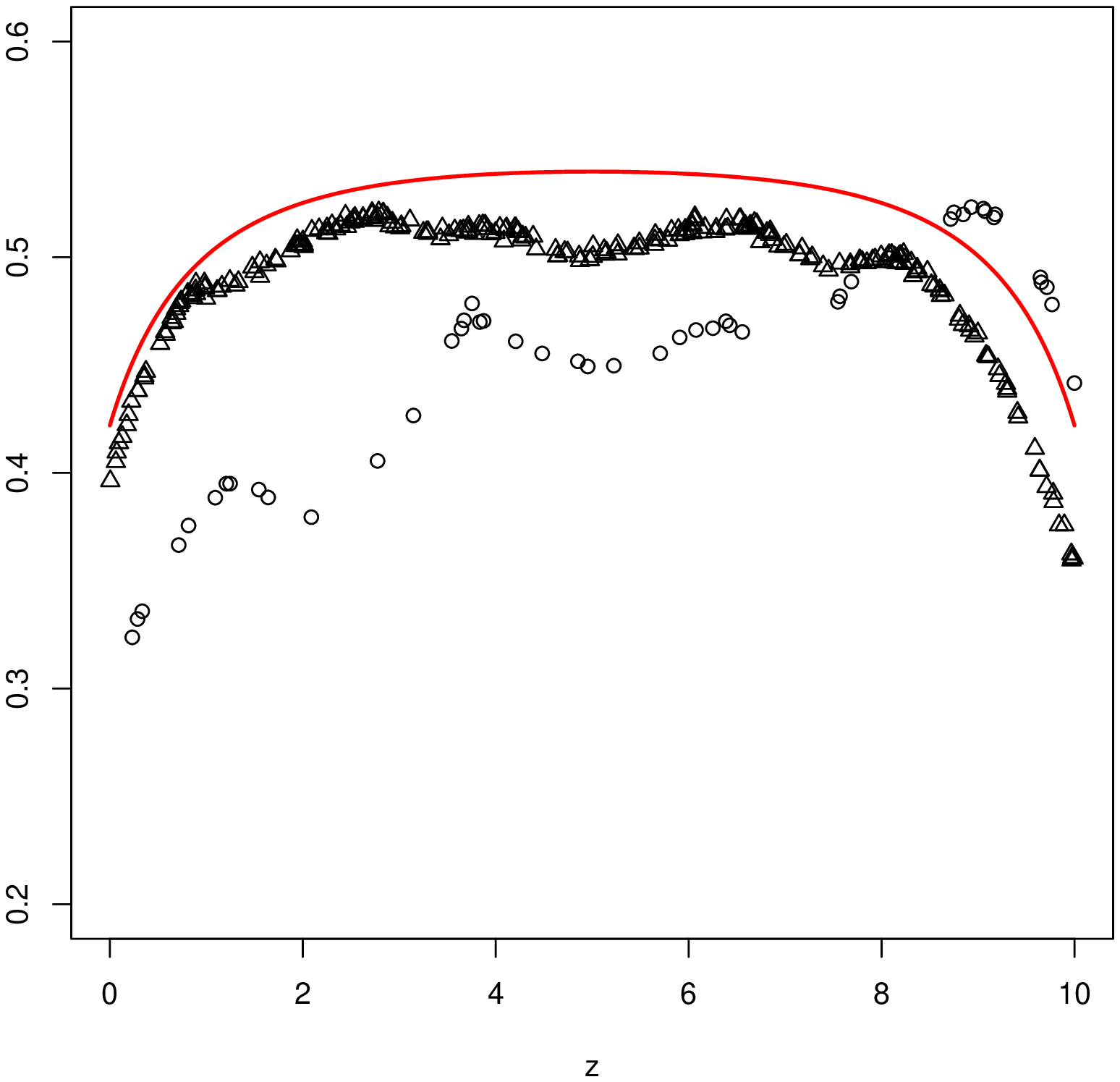}
\includegraphics[width=7.5cm]{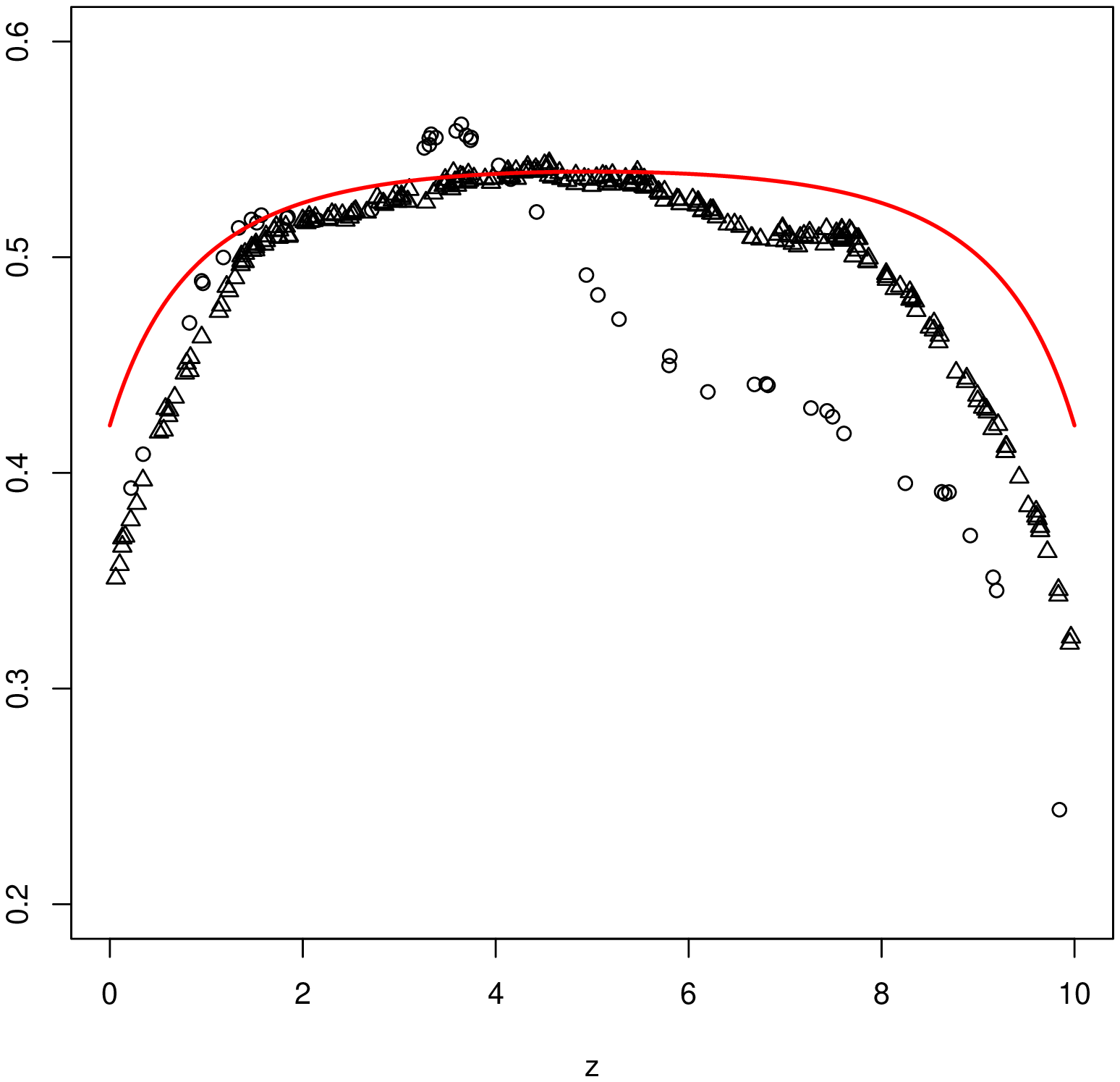}
\includegraphics[width=7.5cm]{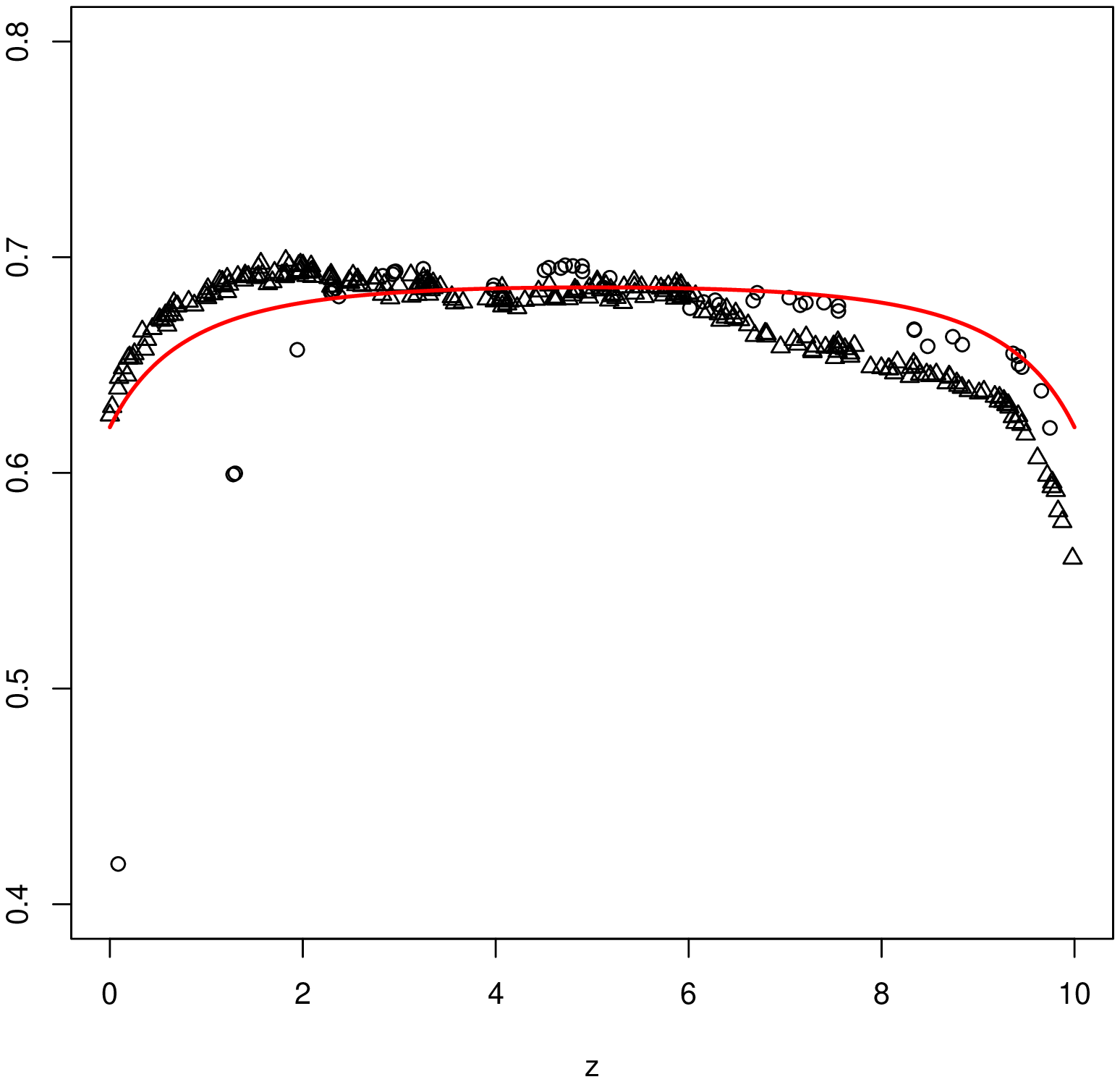}
\includegraphics[width=7.5cm]{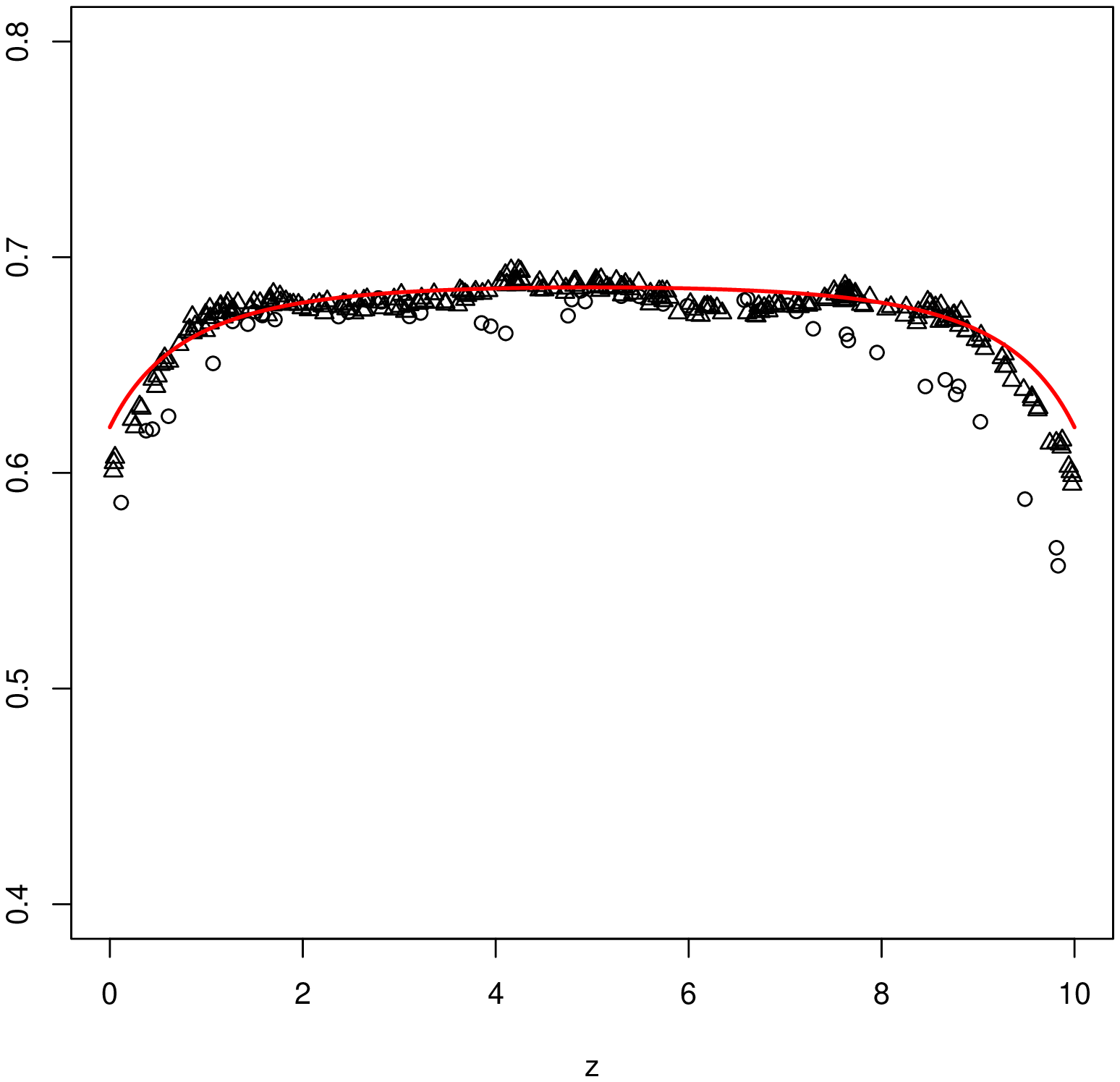}
\caption{The estimated proportion of time each patch is occupied in a
metapopulation with 50 patches ($ \circ $) and 250 patches ($\triangle$)
from simulations. The location of the patch is given by the position of
the points on the $ z $ axis. The line indicates the limiting
probability of patch occupancy. Landscape dynamics for top row took $
L_{t} \sim \mbox{Beta}(1,0.1) $ and $ R_{t} \sim \mbox{Beta}(1,1) $.
Landscape dynamics for bottom row took  $ L_{t} \sim \mbox{Beta}(1,1) $ and $ R_{t} \sim \mbox{Beta}(1,20) $. }\label{Fig2}
\end{figure}

\section{Discussion}

The importance of landscape dynamics to the persistence of metapopulations has been well established in the literature. In contrast to previous mathematical analyses that have employed a suitable/unsuitable classification of habitat patches, we adopted a more general Markovian model for the landscape dynamics to better reflect environmental fluctuations.  It would be of interest to combine the metapopulation model (\ref{MD:Eq2b}) with the Markovian models for succession studied in \cite{Usher:79,Balzter:00,LL:00,BTB:10} among others. The results presented in Sections 3 and 4 would still apply since they were developed for general Markov landscape dynamics. However, using these specific landscape dynamics may reveal a more precise connection with metapopulation survival.

Our analysis yielded similar conclusions to those obtained from models employing the suitable/unsuitable classification of habitat patches. In particular, we note that when certain simplifications are imposed, our persistence criterion (Theorem \ref{theorem3})  has a similar form to the persistence criterion for the spatially realistic Levins' model with dynamic landscape \cite{DFS:05,XFAS:06}. As such, it seems that these conclusions are relatively robust to at least some modification of the model assumptions.  Specifically, we believe the assumptions requiring the model to have the phase structure and for the patch characteristics to be identically distributed across space could be relaxed. Without the phase structure, we expect the existence and uniqueness of the equilibrium to still hold. In fact, much of the current proof could be retained with Lemma \ref{Lem:Mono} being the main difficulty. On the other hand, to prove stability of the equilibrium would need different methods to those currently used. To relax the assumption that patch characteristics are identically distributed across space, we could allow the transition kernel of the patch characteristic, and hence its stationary distribution, to depend on the patch location. This would bring our model closer to the setting described in \cite{DFS:05,XFAS:06} where differences in patch sizes and extinction rates are accommodated. Using the general results from Appendix A, we expect the connectivity measure to still have a deterministic limit and local populations at different patches to be asymptotically independent. However, to obtain a recursion similar to  (\ref{Thm2:Eq0}) - (\ref{Thm2:Eq2}) which we needed to construct the persistence criterion in Theorem \ref{theorem3}, would require a closer investigation of the reversed Markov chain for the patch characteristic.

Of the other assumptions used in the analysis, most are technical assumptions introduced to avoid certain pathological cases. The two assumptions which would significantly impact the results are that the colonisation function is concave and that fluctuations in the landscape are independent between habitat patches. As noted in Section 4, allowing non-concave colonisation functions, such as the one used in \cite{Hanski:94}, would introduce the possibility of a strong Allee-like effect in the metapopulation \cite{CBG:08} so the equilibrium would no longer be globally stable. Independence of patch characteristics at different patches has been used in many other metapopulation models incorporating landscape dynamics \cite{KMVHL:00,DFS:05,Ross:06,WCP:06,XFAS:06,RSAH:15}. However, for certain environmental disturbances such as fires, droughts and floods, the spatial extent can be large compared to the entire habitable area \cite{TBPP:98}, which means the assumption of independence between patches is unlikely to hold. If the metapopulation exhibited some limiting behaviour without the independence assumption, then it would most likely have a very different form.

Dependence between patch characteristics at different patches may not always be obvious. Theorem 3.2 offers the possibility of identifying dependence between patches when the metapopulation is large. For large metapopulations, if patch characteristics are independent at different patches, then the local populations at the two patches are approximately independent. We could estimate the strength of dependence between two patches. Strong dependence would indicate the presence of dependence in the patch characteristics. Unfortunately, testing for independence would not be very useful since the independence of local populations is only asymptotic.

One important way in which our model differs from those using the suitable/unsuitable classification of habitat patches is in the distribution of the local population life span. For static landscapes and dynamics landscapes using the suitable/unsuitable classification, the life span of a local population always has a geometric distribution (discrete time models) or an exponential distribution (continuous time models). We do not have any results characterising the life span distributions permitted by our model, but we expect that almost any distribution is possible by analogy with phase type distributions \cite{OCinneide:90}. 

The effect of landscape dynamics on the equilibrium level of the metapopulation appears quite complicated in general, however for metapopulations with phase structure the role of landscape dynamics is much clearer. The landscape dynamics affect the equilibrium of the metapopulation primarily through the expected future contributions to the connectivity measure of a colonised patch. When the patch area is constant, expected future contributions to the connectivity measure of a colonised patch is determined by the distribution of the local population's life span. Given these results, it is natural to wonder whether the metapopulations with dynamic landscape behave similarly to metapopulations whose local populations have non-geometric/exponential life span distributions, at least in the specialised setting. However, in analysis not reported here we have seen that the equilibrium level in metapopulations with non-geometric life span distributions depends on the life span distribution only through its expectation. This is perhaps not
surprising given this has also been observed in the SIS model with general infectious period distributions \cite{Neal:14}. (Recall the standard SIS model, or stochastic logistic model, has been used a stochastic counterpart to Levins' model \cite{Ovaskainen:01}.)

Finally, it has been observed in metapopulation models with suitable/unsuitable habitat dynamics that metapopulations are more likely to persist and will persist at higher levels of occupancy with static landscapes than with dynamic landscapes. Our analysis shows that this still holds for metapopulations with more general landscape dynamics (Corollary \ref{thm:Max}) if the patch area is constant. However, static landscapes are not necessarily optimal when the patch area is stochastic, leading to similar behaviour to pulsed dispersal \cite{RSAH:15}. The conclusion that static landscapes are optimal may also be false if we move beyond the metapopulation framework and consider species coexistence. Multiple mechanisms have been identified by which landscape dynamics enables the species coexistence \cite{SC:02,RSW:04,MC:09}. These mechanisms are based on different species responding to environmental disturbances in different ways. For example, one species may be better at surviving disturbance, while another may actively try to colonise new areas in response to the disturbance. So although a static landscape may be optimal from the perspective of a single species, it may be to the detriment of other species in the community.

\section{Appendix A --- Proofs for single patch asymptotics}
In this appendix we prove the results of Section \ref{Sec:Asym} in a more general form than stated there. We begin by listing the main assumptions used in our analysis of model (\ref{MD:Eq2b}). For all $ n $:
\begin{itemize}
\item[(A)] The functions $ a(\cdot) $ and $ s(\cdot) $ are continuous on $ \Theta $.  
\item[(B)] Both $ \Theta $ and $ \Omega $ are  compact spaces.
\item[(C)] The colonisation function is continuous in the second argument and satisfies the Lipschitz condition 
\[
\sup_{\theta \in\Theta} \left| f(z;\theta) - f(\tilde{z};\theta) \right| \leq L \left|z-\tilde{z}\right|
\]
for any $ z,\tilde{z} \in \Omega $ and some $ L < \infty $. The colonisation function  is increasing and satisfies $ f(0) = 0 $.
\item[(D)] The function $ D(\cdot,\cdot) $ defines a uniformly bounded and equicontinuous family of functions on $ \Omega $. That is, there exists a finite constant $ \bar{D} $ such that for all $ z,\tilde{z} \in \Omega $, 
$
\left|D(z,\tilde{z})\right| \leq \bar{D},
$
and for every $ \epsilon > 0 $ there exists a $ \delta>0 $ such that for all $ z,\tilde{z} $ with $ \| z - \tilde{z}\| < \delta $ 
$$
\sup_{y\in \Omega} |D(z,y) - D(\tilde{z},y)| < \epsilon.
$$
Furthermore, $ D(z,\tilde{z}) > 0 $ for all $ (z,\tilde{z}) \in \Omega\times\Omega $.
\item[(E)] The transition kernel of the patch characteristic process
satisfies the weak Feller property, that is, for every continuous function $ h $ on $ \Theta $, the function defined by 
$$
Ph(\theta) := \int_\th h(\eta) P(\theta,d\eta), \quad \theta\in\Theta,
$$
is also continuous \citep[Proposition 6.1.1(i)]{MT:96}.
\end{itemize}
We will discuss these assumptions further in Subsection \ref{Subsec:Proof3}. For now we note that Assumptions (A) - (D) are satisfied by typical models. Assumption (E) is a regularity assumption needed when $ \Theta $ is a general state space. It basically requires that the distributions $ P(\theta,\cdot) $ and $ P(\theta^{\prime}, \cdot) $ are close if $ \theta $ and $ \theta^{\prime} $ are close. When $ \Theta $ is a finite state space, Assumption (E) is trivially satisfied.

\subsection{Limiting behaviour of the landscape} \label{Subsec:Land}
We construct random measures $ \sigma_{n,t} $ which summarise the state of the landscape in a metapopulation with $ n $ patches at time $ t $. These measures are purely atomic, placing mass $ n^{-1} $ at the point determined by patch $ i$'s location and its characteristic variable at time~$t$. Let $ C^{+}(\Theta\times\Omega)$ be the space of continuous functions $ h: \Theta \times \Omega \rightarrow [0,\infty) $. By Assumption (B), $ \Omega $ and $ \Theta $ are compact so every function in $ C^{+}(\Theta\times\Omega) $ is bounded. The random measure $ \sigma_{n,t} $ is defined by
$$
\int_\thom h(\theta,z) \sigma_{n,t} (d\theta,dz)  :=  n^{-1} \sum_{i=1}^{n} h(\theta_{i,t},z_{i}), \quad \mbox{for all }  h \in C^{+}(\Theta\times\Omega).
$$
As $ n\rightarrow \infty $, the sequence of random measure $ \sigma_{n,t} $ converges in distribution to $ \sigma_{t} $ if and only if 
\begin{equation}
\int_\thom h(\theta,z) \sigma_{n,t}(d\theta,dz) \stackrel{d}{\rightarrow} \int_\thom h(\theta,z) \sigma_{t}(d\theta,dz), \quad \mbox{for all } h \in C^{+}(\Theta\times\Omega) \label{MeasureConverge}
\end{equation}
\citep[Theorem 16.16]{Kallenberg:02}. Since we are only deal with random measures converging to non-random measures, the convergence in (\ref{MeasureConverge}) can be replaced by convergence in probability. The last of our main assumptions is
\begin{itemize}
\item[(F)] As $ n \rightarrow \infty $, $ \sigma_{n,0} \stackrel{d}{\rightarrow} \sigma_{0} $ for some non--random measure $ \sigma_{0} $. 
\end{itemize}
Although this assumption only concerns the initial variation in the landscape, it implies a similar `law of large numbers' for the landscape at all subsequent times. 

\begin{lemma} \label{lemma1}
Suppose Assumptions~(B), (E) and (F) hold. Then $ \sigma_{n,t} \stackrel{d}{\rightarrow} \sigma_{t} $, where $ \sigma_{t} $ is defined by the recursion
$$
\int_\thom h(\theta,z) \sigma_{t+1} (d\theta,dz) = \int_\thom h(\theta,z) \int_\om P(\eta,d\theta) \sigma_{t}(d\eta,dz), \quad \mbox{for all } h \in C^{+}(\Theta\times\Omega).
$$
\end{lemma}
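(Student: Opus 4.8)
The plan is to prove the lemma by induction on $ t $, the base case $ t = 0 $ being precisely Assumption~(F). For the inductive step I would assume $ \sigma_{n,t} \stackrel{d}{\rightarrow} \sigma_{t} $, fix an arbitrary $ h \in C^{+}(\Theta\times\Omega) $, and — using the criterion (\ref{MeasureConverge}) — reduce the problem to showing $ \int_\thom h \, d\sigma_{n,t+1} \stackrel{d}{\rightarrow} \int_\thom h\, d\sigma_{t+1} $, where $ \sigma_{t+1} $ is the measure determined by the stated recursion, which in the notation $ Ph(\theta,z) := \int_\th h(\eta,z) P(\theta,d\eta) $ reads $ \int_\thom h\, d\sigma_{t+1} = \int_\thom Ph\, d\sigma_{t} $.

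First I would isolate the fluctuation term by conditioning. Let $ \mathcal{F}_{t} $ be the $ \sigma $-field generated by $ z^{n} $ together with $ (\theta^{n}_{s}, s \leq t) $ (one may also throw in the metapopulation state up to time $ t $, which is irrelevant here). Since the patch characteristics evolve as independent Markov chains with common kernel $ P $, independently of everything else, conditional on $ \mathcal{F}_{t} $ the variables $ \theta_{1,t+1},\ldots,\theta_{n,t+1} $ are independent with $ \theta_{i,t+1} \sim P(\theta_{i,t},\cdot) $. Hence
\[
\mathbb{E}\left[ \int_\thom h \, d\sigma_{n,t+1} \;\Big|\; \mathcal{F}_{t} \right] = n^{-1} \sum_{i=1}^{n} Ph(\theta_{i,t},z_{i}) = \int_\thom Ph \, d\sigma_{n,t},
\]
and, as the summands are conditionally independent and bounded by $ \|h\|_{\infty} < \infty $ (Assumption~(B)),
\[
\mathrm{Var}\left[ \int_\thom h \, d\sigma_{n,t+1} \;\Big|\; \mathcal{F}_{t} \right] \leq \frac{\|h\|_{\infty}^{2}}{n}.
\]
Taking expectations and applying Chebyshev's inequality gives $ \int_\thom h\, d\sigma_{n,t+1} - \int_\thom Ph\, d\sigma_{n,t} \stackrel{p}{\rightarrow} 0 $.

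Next I would verify $ Ph \in C^{+}(\Theta\times\Omega) $ so that the inductive hypothesis applies to it: nonnegativity and boundedness are immediate, while for continuity one notes that $ h $ is uniformly continuous on the compact space $ \Theta\times\Omega $, so for $ z,z' $ close $ |h(\eta,z)-h(\eta,z')| $ is small uniformly in $ \eta $, whence $ |Ph(\theta,z)-Ph(\theta,z')| $ is small uniformly in $ \theta $; and for each fixed $ z $, $ \theta\mapsto Ph(\theta,z) $ is continuous by the weak Feller property (Assumption~(E)), and the two facts combine to give joint continuity. The same reasoning shows $ h\mapsto\int_\thom Ph\, d\sigma_{t} $ is a positive linear functional on $ C^{+}(\Theta\times\Omega) $ with $ \int_\thom 1\, d\sigma_{t+1} = \int_\thom 1\, d\sigma_{t} = 1 $, so by the Riesz representation theorem the recursion indeed determines $ \sigma_{t+1} $ as a well-defined probability measure on $ \Theta\times\Omega $. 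Then the inductive hypothesis gives $ \int_\thom Ph\, d\sigma_{n,t} \stackrel{d}{\rightarrow} \int_\thom Ph\, d\sigma_{t} = \int_\thom h\, d\sigma_{t+1} $ (convergence in probability, the limit being deterministic); adding the fluctuation estimate yields $ \int_\thom h\, d\sigma_{n,t+1} \stackrel{d}{\rightarrow} \int_\thom h\, d\sigma_{t+1} $, and since $ h $ was arbitrary, (\ref{MeasureConverge}) gives $ \sigma_{n,t+1} \stackrel{d}{\rightarrow} \sigma_{t+1} $, closing the induction.

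The one step I expect to require genuine care is the upgrade of the weak Feller property — stated only for functions of $ \theta $ — to joint continuity of $ Ph $ on $ \Theta\times\Omega $; this is exactly where compactness of $ \Omega $ is used, via uniform continuity of $ h $. Everything else is a routine conditional law of large numbers together with the standard bookkeeping for convergence of random measures to a deterministic limit.
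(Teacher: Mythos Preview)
Your proof is correct and follows essentially the same route as the paper: induction on $t$, computation of the conditional mean as $\int_\thom Ph\,d\sigma_{n,t}$, a conditional variance bound of order $n^{-1}\|h\|_\infty^{2}$ combined with Chebyshev, and the key verification that $Ph\in C^{+}(\Theta\times\Omega)$ via the weak Feller property in $\theta$ and uniform continuity of $h$ (from compactness) in $z$. The only addition you make beyond the paper is the explicit appeal to Riesz to confirm $\sigma_{t+1}$ is a well-defined probability measure, which is a nice touch but not strictly needed.
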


\begin{proof}
If $ \int_\thom h(\theta,z)\, \sigma_{n,t}(d\theta,dz) \stackrel{d}{\rightarrow} \int_\thom h(\theta,z)\, \sigma_{t}(d\theta,dz) $ for all $ h \in C^{+}(\Theta\times\Omega) $, then $ \sigma_{n,t} \stackrel{d}{\rightarrow} \sigma_{t} $ \citep[Theorem 16.16]{Kallenberg:02}. We use induction on $ t $ to prove weak convergence of the random measures $ \sigma_{n,t} $ to non--random measures $ \sigma_{t} $. By Assumption~(F), $ \sigma_{n,0} \stackrel{d}{\rightarrow} \sigma_{0} $ for some non--random measure $ \sigma_{0}$.  The conditional expectation of $ \int_\thom h(\theta,z)\, \sigma_{n,t+1}(d\theta,dz)  $ given $ (\theta_{t}^{n}, z^{n}) $ is
\begin{align*}
\mathbb{E} \left(\int_\thom h(\theta,z) \sigma_{n,t+1} (d\theta,dz) \mid \theta_{t}^{n}, z^{n} \right) & =  n^{-1} \sum_{i=1}^{n} \int_\om h(\eta,z_{i}) P(\theta_{i,t},d\eta) \\
& = \int_\thom \left\{ \int_\om h(\eta,z) P(\theta,d\eta) \right\} \sigma_{n,t}(d\theta,dz).
\end{align*}
Suppose that $ \sigma_{n,t} \stackrel{d}{\rightarrow} \sigma_{t} $ for some non--random measure $ \sigma_{t}$. If $ \int_\th h(\eta,z) P(\theta,d\eta) $ is in $ C^{+}(\Theta\times\Omega)$, then
\begin{equation}
\lim_{n\rightarrow\infty} \mathbb{E} \left(\int_\thom h(\theta,z) \sigma_{n,t+1} (d\theta,dz) \mid \theta_{t}^{n}, z^{n} \right) = \int_\thom \left\{ \int_\om h(\eta,z) P(\theta,d\eta) \right\} \sigma_{t}(d\eta,dz). \label{ProofLem1:Eq0}
\end{equation}
We now show that $ \int_\th h(\eta,z) P(\theta,d\eta) \in C^{+}(\Theta\times\Omega) $. For any $(\theta',z') \rightarrow (\theta,z) $, 
\begin{align*}
\lim_{(\theta',z') \rightarrow (\theta,z)} \int_\th h(\eta,z')P(\theta',d\eta) &  = \lim_{\theta' \rightarrow \theta}  \int_\th h(\eta,z)P(\theta',d\eta)  \\
& + \lim_{(\theta',z') \rightarrow (\theta,z)}  \int_\th \left[h(\eta,z') - h(\eta,z)\right] P(\theta',d\eta).
\end{align*}
Since $ P $ has the weak Feller property from Assumption~(E), 
$$ 
\lim_{\theta' \rightarrow \theta} \int_\th h(\eta,z)P(\theta',d\eta) =  \int_\th h(\eta,z)P(\theta,d\eta). 
$$
As $ \int_\th P(\theta,d\eta) = 1$, 
$$
\left| \lim_{(\theta',z') \rightarrow (\theta,z)}  \int_\th \left[h(\eta,z') - h(\eta,z)\right] P(\theta',d\eta) \right| \leq \lim_{z'\rightarrow z}\sup_{\theta'\in\Theta} |h(\theta',z') - h(\theta',z) | .
$$
From Assumption (B), $ \Theta \times \Omega $ is compact so the Heine-Cantor Theorem implies that $ h $ is uniformly continuous. Therefore, $ \int_\th h(\eta,z')  P(\theta',d\eta) \rightarrow \int_\th h(\eta,z) P(\theta,d\eta) $ as $ (\theta',z') \rightarrow (\theta,z) $. Hence, $ \int_\th h(\eta,z)P(\theta,d\eta) \in C^{+}(\Theta\times\Omega) $ and equality (\ref{ProofLem1:Eq0}) holds.

The conditional variance of $ \int_\thom h(\theta,z) \sigma_{n,t+1}(d\theta,dz) $ can be bounded by
$$
\mbox{var}\left( \int_\thom h(\theta,z) \sigma_{n,t+1}(d\theta,dz) \mid \theta_{t}^{n}, z^{n} \right) \leq n^{-1}  \sup_{(\theta,z)\in\Theta\times \Omega} |h(\theta,z)|^{2}.
$$
As the conditional variance goes to zero in probability, we can apply a Chebyshev type inequality \cite[Appendix C]{MP:12} to conclude that  
\begin{align}
\int_\thom h(\theta,z) \sigma_{n,t+1}(d\theta,dz)  & \stackrel{p}{\rightarrow}  \int_\thom \left\{ \int_\om h(\eta,z) P(\theta,d\eta) \right\} \sigma_{t}(d\theta,dz). \nonumber \\
& = \int_\thom h(\theta,z) \left\{  \int_\om P(\eta,d\theta)  \sigma_{t}(d\eta,dz) \right\} \label{ProofLem1:Eq1} \\
& = \int_\thom h(\theta,z) \sigma_{t+1}(d\theta,dz). \nonumber
\end{align}
Hence, $ \sigma_{n,t+1} \stackrel{d}{\rightarrow} \sigma_{t+1} $. The recursion for $ \sigma_{t+1} $ is determined by equation (\ref{ProofLem1:Eq1}).
\end{proof}

\subsection{Limiting behaviour of the metapopulation} \label{Subsec:Meta}
Similar to our treatment of the landscape, we construct random measured $ \mu_{n,t} $ which summarise the state of the metapopulation at time $ t $. These measures are defined by
$$
\int_\thom h(\theta,z) \mu_{n,t} (d\theta,dz)  :=  n^{-1} \sum_{i=1}^{n}  X_{i,t}^{n} h(\theta_{i,t},z_{i}), \quad \mbox{for all } h \in C^{+}(\Theta\times\Omega). \label{MR:Eq4}
$$
The measure $ \mu_{n,t}$ has a similar structure to $ \sigma_{n,t}$, but only involves those patches that are occupied at time~$ t $.   Under the stated assumptions, the sequence of random measures $ \{\mu_{n,t}\}_{n=1}^{\infty} $ converges to a deterministic measure as the number of patches tends to infinity.
  
\begin{theorem} \label{theorem1} Suppose that Assumptions (A) -- (F) hold and that $ \mu_{n,0} \stackrel{d}{\rightarrow} \mu_{0} $ for some non--random measure $ \mu_{0} $. Then $ \mu_{n,t} \stackrel{d}{\rightarrow} \mu_{t} $ for all $ t=0,1,\ldots, $ where $ \mu_{t} $ is defined by the recursion
\begin{align}
\lefteqn{\int_\thom h(\theta,z) \mu_{t+1}(d\theta,dz)} \nonumber\\
=  & \int_\thom s(\theta) \left\{ \int_\th h(\eta,z) P(\theta,d\eta) \right\} \mu_{t}(d\theta,dz)  \nonumber \\
& + \int_\thom \left\{ \int_\th h(\eta,z) P(\theta,d\eta) \right\} f\left( \int_\thom a(\tilde{\theta}) D(z,\tilde{z})\mu_{t}(d\tilde{\theta},d\tilde{z}); \theta \right) \sigma_{t}(d\theta,dz) \nonumber\\
&   - \int_\thom \left\{ \int_\th h(\eta,z) P(\theta,d\eta) \right\} f\left( \int_\thom a(\tilde{\theta}) D(z,\tilde{z})\mu_{t}(d\tilde{\theta},d\tilde{z}); \theta \right)  \mu_{t}(d\theta,dz), \label{MR:Eq5}
\end{align}
for all $ h \in C^{+}(\Theta\times\Omega)$.
\end{theorem}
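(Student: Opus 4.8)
The plan is to argue by induction on $t$, mirroring the proof of Lemma \ref{lemma1}. The base case $t=0$ is the hypothesis $\mu_{n,0}\stackrel{d}{\rightarrow}\mu_{0}$, and Lemma \ref{lemma1} (which needs only (B), (E), (F)) gives $\sigma_{n,t}\stackrel{d}{\rightarrow}\sigma_{t}$ for all $t$ with $\sigma_{t}$ non-random. Suppose $\mu_{n,t}\stackrel{d}{\rightarrow}\mu_{t}$ for some non-random (sub-probability) measure $\mu_{t}$; since both limits are deterministic, these convergences hold in probability and jointly, so $(\sigma_{n,t},\mu_{n,t})\stackrel{p}{\rightarrow}(\sigma_{t},\mu_{t})$.

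For the inductive step I would first compute the conditional expectation of $\int_\thom h(\theta,z)\,\mu_{n,t+1}(d\theta,dz)$ given $\mathcal{F}_{t}^{n}:=\sigma(X_{s}^{n},\theta_{s}^{n},\,s\leq t;\,z^{n})$. Conditionally on $\mathcal{F}_{t}^{n}$ the pairs $(X_{i,t+1}^{n},\theta_{i,t+1})$ are independent over $i$; within a pair $X_{i,t+1}^{n}$ is Bernoulli with the probability in (\ref{MD:Eq2b}), $\theta_{i,t+1}\sim P(\theta_{i,t},\cdot)$, and the two are conditionally independent because the colonisation/extinction step depends on $\theta_{i,t}$ but not on $\theta_{i,t+1}$. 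Carrying out the expectation patch by patch, and using that the connectivity measure at patch $i$ equals $\int_\thom a(\tilde\theta)D(z_{i},\tilde z)\,\mu_{n,t}(d\tilde\theta,d\tilde z)$, the conditional expectation is exactly the right-hand side of (\ref{MR:Eq5}) with $(\sigma_{t},\mu_{t})$ replaced by $(\sigma_{n,t},\mu_{n,t})$; denote this functional by $\Phi(\sigma_{n,t},\mu_{n,t})$. Its three summands come, respectively, from the survival term $s(\theta_{i,t})X_{i,t}^{n}$ (integrated against $\mu_{n,t}$), the colonisation term $f(\,\cdot\,;\theta_{i,t})$ over all patches (integrated against $\sigma_{n,t}$), and the correction $-f(\,\cdot\,;\theta_{i,t})X_{i,t}^{n}$ removing the patches already occupied at time $t$.

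The crux is to prove $\Phi(\sigma_{n,t},\mu_{n,t})\stackrel{p}{\rightarrow}\Phi(\sigma_{t},\mu_{t})$, the right-hand side of (\ref{MR:Eq5}). As in the proof of Lemma \ref{lemma1}, Assumptions (B) and (E) give $(\theta,z)\mapsto\int_\th h(\eta,z)P(\theta,d\eta)\in C^{+}(\thom)$. By Assumptions (A), (B) and (D), $z\mapsto\int_\thom a(\tilde\theta)D(z,\tilde z)\,\mu(d\tilde\theta,d\tilde z)$ is bounded and continuous for every finite measure $\mu$, and by (C) the colonisation function is jointly continuous and Lipschitz in its first argument; hence the integrand $G(\theta,z)=\{\int_\th h(\eta,z)P(\theta,d\eta)\}\,f\big(\int_\thom a(\tilde\theta)D(z,\tilde z)\mu_{t}(d\tilde\theta,d\tilde z);\theta\big)$ of the last two summands lies in $C^{+}(\thom)$, so $\int G\,d\sigma_{n,t}\stackrel{p}{\rightarrow}\int G\,d\sigma_{t}$ and $\int G\,d\mu_{n,t}\stackrel{p}{\rightarrow}\int G\,d\mu_{t}$. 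It remains to absorb the error from having $\mu_{n,t}$ rather than $\mu_{t}$ inside $f$: writing $g_{n}(z)$ and $g(z)$ for the connectivity measures formed from $\mu_{n,t}$ and $\mu_{t}$, the Lipschitz bound in (C) together with $\|\sigma_{n,t}\|\leq1$ and $\|\mu_{n,t}\|\leq1$ reduces the task to showing $\sup_{z\in\Omega}|g_{n}(z)-g(z)|\stackrel{p}{\rightarrow}0$. \emph{This uniform-in-location law of large numbers for the connectivity measure is the main obstacle.} I would establish it from Assumption (D): the family $\{(\tilde\theta,\tilde z)\mapsto a(\tilde\theta)D(z,\tilde z):z\in\Omega\}$ is uniformly bounded and equicontinuous, hence relatively compact in $C(\thom)$ by the Arzel\`a--Ascoli theorem, and the weak convergence $\mu_{n,t}\stackrel{p}{\rightarrow}\mu_{t}$ then upgrades to uniform convergence of the integrals over this family, much as in \cite{MP:12,MP:13}.

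Finally, since the $(X_{i,t+1}^{n},\theta_{i,t+1})$ are conditionally independent over $i$ and $X_{i,t+1}^{n}\in\{0,1\}$, the conditional variance obeys $\mbox{var}\big(\int_\thom h\,d\mu_{n,t+1}\mid\mathcal{F}_{t}^{n}\big)\leq n^{-1}\sup_{(\theta,z)\in\thom}|h(\theta,z)|^{2}\to0$. Combining this with $\mathbb{E}\big(\int_\thom h\,d\mu_{n,t+1}\mid\mathcal{F}_{t}^{n}\big)\stackrel{p}{\rightarrow}\Phi(\sigma_{t},\mu_{t})$ and the Chebyshev-type inequality of \cite[Appendix C]{MP:12} gives $\int_\thom h\,d\mu_{n,t+1}\stackrel{p}{\rightarrow}\Phi(\sigma_{t},\mu_{t})$ for every $h\in C^{+}(\thom)$, which is by definition $\int_\thom h\,d\mu_{t+1}$ under the recursion (\ref{MR:Eq5}) (the limiting functional being positive, linear and bounded, it represents a sub-probability measure, and tightness of the $\mu_{n,t+1}$ on the compact $\thom$ makes the distributional limit this deterministic measure). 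As the limits are non-random, \cite[Theorem 16.16]{Kallenberg:02} yields $\mu_{n,t+1}\stackrel{d}{\rightarrow}\mu_{t+1}$, which closes the induction.
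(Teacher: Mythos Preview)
Your proposal is correct and follows essentially the same approach as the paper: induction on $t$, computation of the conditional expectation as the three-term functional (survival against $\mu_{n,t}$, colonisation against $\sigma_{n,t}$, correction against $\mu_{n,t}$), a uniform-in-$z$ law of large numbers for the connectivity measure to handle the argument of $f$, continuity of the integrands via Assumptions (A)--(E), and the conditional variance bound plus the Chebyshev-type inequality of \cite[Appendix C]{MP:12}. The only cosmetic differences are that the paper writes the conditional expectation with the \emph{limiting} connectivity $\int a(\tilde\theta)D(z,\tilde z)\,\mu_{t}(d\tilde\theta,d\tilde z)$ already inside $f$ and isolates the discrepancy as an explicit error term $\epsilon_{n,t}(h)$ bounded via the Lipschitz condition, and that for the uniform convergence $\sup_{z}|g_{n}(z)-g(z)|\stackrel{p}{\rightarrow}0$ the paper invokes Ranga Rao \cite[Theorem 3.1]{RR:62} directly rather than arguing via Arzel\`a--Ascoli as you do; the mathematical content is the same.
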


\begin{proof}
The proof follows closely the arguments of the proof of Lemma \ref{lemma1} and the proof of Theorem 3.1 \citep{MP:14}. By assumption $ \mu_{n,0} \stackrel{d}{\rightarrow} \mu_{0} $ for some non--random measure $ \mu_{0}$. Suppose that $ \mu_{n,t} \stackrel{d}{\rightarrow} \mu_{t} $ for some non--random measure $ \mu_{t}$. Then
\begin{align}
&\mathbb{E} \left(\int_\thom h(\theta,z) \, \mu_{n,t+1}(d\theta,dz) \mid
X_{t}^{n}, \theta_{t}^{n}, z^{n} \right) \hspace*{8cm} \nonumber\\
 = & \ n^{-1} \sum_{i=1}^{n} \mathbb{E} \left(h(\theta_{i,t+1},z_{i}) | \theta_{i,t}, z_{i}\right) \mathbb{E}\left(X_{i,t+1}^{n} | X_{t}^{n}, \theta_{t}^{n}, z^{n}\right) \nonumber \\
= & \ \int_\thom s(\theta) \left\{ \int_\th h(\eta,z) P(\theta,d\eta) \right\} \mu_{n,t}(d\theta,dz)  \label{MR:Eq6a} \\
& + \int_\thom \left\{ \int_\th h(\eta,z) P(\theta,d\eta) \right\} f\left( \int_\thom a(\tilde{\theta}) D(z,\tilde{z})\mu_{t}(d\tilde{\theta},d\tilde{z}); \theta \right) \sigma_{n,t}(d\theta,dz)  \label{MR:Eq6b} \\
& - \int_\thom \left\{ \int_\th h(\eta,z) P(\theta,d\eta) \right\} f\left( \int_\thom a(\tilde{\theta}) D(z,\tilde{z})\mu_{t}(d\tilde{\theta},d\tilde{z}); \theta \right) \mu_{n,t}(d\theta,dz)  + \epsilon_{n,t}(h),  \label{MR:Eq6c} 
\end{align}
where
\begin{align*}
|\epsilon_{n,t}(h) |  \leq & 2L \left(\int_\thom\int_\th h(\eta,z) P(\theta,d\eta)
\sigma_{n}(d\theta,dz)  \right)  \\
& \times \sup_{z \in \Omega} \left|\int_\thom a(\tilde{\theta}) D(z,\tilde{z})\mu_{n,t}(d\tilde{\theta},d\tilde{z}) - \int_\thom a(\tilde{\theta}) D(z,\tilde{z})\mu_{t}(d\tilde{\theta},d\tilde{z}) \right|, 
\end{align*}
 as $ f $ is uniformly Lipschitz continuous from Assumption~(C). Ranga Rao \cite[Theorem 3.1]{RR:62} showed that 
$$
\sup_{g \in \mathcal{G}} \left|\int_{\mathcal{X}} g(x) \nu_{n}(dx)-\int_{\mathcal{X}} g(x) \nu(dx) \right| \rightarrow 0,
$$
for a sequence of probability measures $ \nu_{n} $ converging weakly to $ \nu $ and $ \mathcal{G} $ a uniformly bounded and equicontinuous family of functions on $ \mathcal{X}$. Applying a small modification that result and Assumption~(D), it follows that if $ \mu_{n,t} \stackrel{d}{\rightarrow} \mu_{t} $, a non--random measure, then
\[
\sup_{z \in \Omega} \left|\int_\thom a(\tilde{\theta})D(z,\tilde{z})\mu_{n,t}(d\tilde{\theta},d\tilde{z}) - \int_\thom a(\tilde{\theta}) D(z,\tilde{z})\mu_{t}(d\tilde{\theta},d\tilde{z}) \right|  \stackrel{p}{\rightarrow} 0.
\]
To prove convergence of the integrals at (\ref{MR:Eq6a}) - (\ref{MR:Eq6c}), we need both $ s(\theta) \int_\th h(\eta,z)P(\theta,d\eta) $ and $ \int_\th h(\eta,z)P(\theta,d\eta) f(\int_\thom a(\tilde{\theta}) D(z,\tilde{z}) \mu_{t}(d\tilde{\theta},d\tilde{z});\theta) $ to be in $ C^{+}(\Theta\times\Omega) $. From the proof of Lemma \ref{lemma1}, $ \int_\th h(\eta,z)P(\theta,d\eta) \in C^{+}(\Theta\times\Omega) $. With Assumption (A) this implies $ s(\theta) \int_\th h(\eta,z)P(\theta,d\eta) \in C^{+}(\Theta\times\Omega) $. Also $ f(\int_\thom a(\tilde{\theta}) D(z,\tilde{z}) \mu_{t}(d\tilde{\theta},d\tilde{z});\theta) \in C^{+}(\Theta\times\Omega) $ by Assumptions~(C) and (D)  so $ \int_\th h(\eta,z)P(\theta,d\eta) f(\int_\thom a(\tilde{\theta}) D(z,\tilde{z}) \mu_{t}(d\tilde{\theta},d\tilde{z});\theta) \in C^{+}(\Theta\times\Omega) $.
Applying the induction hypothesis, $ \mu_{n,t} \stackrel{d}{\rightarrow} \mu_{t} $ for some non--random measure $ \mu_{t}$. Therefore,
\begin{align*}
&\mathbb{E} \left(\int_\thom h(\theta,z) \, \mu_{n,t+1}(d\theta,dz) \mid X_{t}^{n}, \theta_{t}^{n}, z^{n} \right) \stackrel{p}{\rightarrow}  \int_\thom s(\theta) \left\{ \int_\th h(\eta,z) P(\theta,d\eta) \right\} \mu_{t}(d\theta,dz) \\
& + \int_\thom \left\{ \int_\th h(\eta,z) P(\theta,d\eta) \right\} f\left( \int_\thom a(\tilde{\theta}) D(z,\tilde{z})\mu_{t}(d\tilde{\theta},d\tilde{z}); \theta \right) \sigma_{t}(d\theta,dz) \\
& - \int_\thom \left\{ \int_\th h(\eta,z) P(\theta,d\eta) \right\} f\left( \int_\thom a(\tilde{\theta}) D(z,\tilde{z})\mu_{t}(d\tilde{\theta},d\tilde{z}); \theta \right) \mu_{t}(d\theta,dz).
\end{align*}
The conditional variance of $ \int_\thom h(\theta,z) \mu_{n,t+1}(d\theta,dz) $ can be bounded by $ n^{-1} \sup_{(\theta,z)} |h(\theta,z)|^{2} $. Applying a Chebyshev type inequality \cite[Appendix C]{MP:12}, we conclude that  $ \int_\thom h(\theta,z) \mu_{n,t+1}(d\theta,dz) $ converges to $ \int_\thom h(\theta,z) \mu_{t+1}(d\theta,dz) $ in probability. Hence, $ \mu_{n,t+1} \stackrel{d}{\rightarrow} \mu_{t+1} $ with $ \mu_{t+1} $ determined by the recursion (\ref{MR:Eq5}).
\end{proof}

A consequence of Theorem \ref{theorem1} is that $ (X^{n}_{i,t},\theta_{i,t}) $ converges to a Markov chain with time dependent transition probabilities.
\begin{corollary} \label{corollary1}
Assume the conditions of Theorem \ref{theorem1} hold. If $ X_{i,0}^{n} \stackrel{p}{\rightarrow} X_{i,0} $, then $ X_{i,t}^{n} \stackrel{p}{\rightarrow} X_{i,t} $ for all $ t \geq 0 $, where the transition probability for $ X_{i,t} $ is 
\begin{equation}
\mathbb{P}\left(X_{i,t+1}=1 \mid X_{i,t},\theta_{i,t} = \theta,z_{i}\right)  = s(\theta)X_{i,t} +  f \left( \psi_{t}(z_{i}); \theta\right) \left(1-X_{i,t}\right) 
\end{equation}
and 
\begin{align}
\psi_{t}(z) & =  \int_\thom  a(\tilde{\theta}) D(z,\tilde{z})  \mu_{t}(d\tilde{\theta},d\tilde{z}) \label{Thm2:Eq1b}
\end{align}
\end{corollary}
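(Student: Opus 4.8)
The plan is to prove the corollary by induction on $ t $, reducing everything to the fact that, by Theorem \ref{theorem1}, the empirical measure $ \mu_{n,t} $ has a \emph{deterministic} limit $ \mu_{t} $. The base case $ t=0 $ is the hypothesis $ X_{i,0}^{n}\stackrel{p}{\rightarrow}X_{i,0} $. For the inductive step, suppose $ X_{i,t}^{n}\stackrel{p}{\rightarrow}X_{i,t} $. Writing $ \psi_{n,t}(z):=\int_\thom a(\tilde\theta)D(z,\tilde z)\,\mu_{n,t}(d\tilde\theta,d\tilde z) $, the conditional occupation probability in (\ref{MD:Eq2b}) is $ p_{n,t}:=s(\theta_{i,t})X_{i,t}^{n}+f(\psi_{n,t}(z_{i});\theta_{i,t})(1-X_{i,t}^{n}) $; let $ p_{t} $ denote the same expression with $ \mu_{n,t} $ replaced by $ \mu_{t} $ and $ X_{i,t}^{n} $ by $ X_{i,t} $, and (after enlarging the probability space to carry a Uniform$(0,1)$ variable $ U_{i,t+1} $ independent of the past) represent $ X_{i,t+1}^{n}=\mathbb{I}(U_{i,t+1}\leq p_{n,t}) $ and define $ X_{i,t+1}:=\mathbb{I}(U_{i,t+1}\leq p_{t}) $ using the \emph{same} $ U_{i,t+1} $. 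This quantile coupling is legitimate because $ X_{i,t+1}^{n} $ is conditionally Bernoulli$(p_{n,t})$, and it gives
\[
\mathbb{P}\left(X_{i,t+1}^{n}\neq X_{i,t+1}\right)\leq\mathbb{E}\,|p_{n,t}-p_{t}|,
\]
so since all quantities lie in $ [0,1] $ it suffices, by bounded convergence, to show $ p_{n,t}\stackrel{p}{\rightarrow}p_{t} $.

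For the survival term, $ X_{i,t}^{n}\stackrel{p}{\rightarrow}X_{i,t} $ together with continuity and boundedness of $ s $ (Assumptions (A), (B)) give $ s(\theta_{i,t})X_{i,t}^{n}\stackrel{p}{\rightarrow}s(\theta_{i,t})X_{i,t} $. For the colonisation term the key input is the uniform-in-$z$ convergence of the connectivity functional already obtained inside the proof of Theorem \ref{theorem1}: from Ranga Rao's theorem and the equicontinuity of $ D $ (Assumption (D)),
\[
\sup_{z\in\Omega}\bigl|\psi_{n,t}(z)-\psi_{t}(z)\bigr|\stackrel{p}{\rightarrow}0,
\]
and hence $ \psi_{n,t}(z_{i})-\psi_{t}(z_{i})\stackrel{p}{\rightarrow}0 $ despite $ z_{i} $ being random. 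The Lipschitz bound on $ f $ in its first argument, uniform in $ \theta $ (Assumption (C)), then yields $ |f(\psi_{n,t}(z_{i});\theta_{i,t})-f(\psi_{t}(z_{i});\theta_{i,t})|\leq L|\psi_{n,t}(z_{i})-\psi_{t}(z_{i})|\stackrel{p}{\rightarrow}0 $; combining with the induction hypothesis and boundedness of $ f $ gives $ p_{n,t}\stackrel{p}{\rightarrow}p_{t} $, closing the induction. Finally, since $ U_{i,t+1} $ is independent of $ (\theta_{i,s},z_{i})_{s\leq t} $ and of $ X_{i,t} $, the triple $ (X_{i,t},\theta_{i,t},z_{i}) $ is a Markov chain and its transition probability is read off directly from $ X_{i,t+1}=\mathbb{I}(U_{i,t+1}\leq p_{t}) $, with $ \psi_{t}(z)=\int_\thom a(\tilde\theta)D(z,\tilde z)\mu_{t}(d\tilde\theta,d\tilde z) $ as in (\ref{Thm2:Eq1b}).

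I expect the only genuinely delicate point to be transferring the convergence of the random measures $ \mu_{n,t} $ to convergence of $ \psi_{n,t}(z_{i}) $ at the \emph{random} evaluation point $ z_{i} $: weak convergence of $ \mu_{n,t} $ alone is insufficient, and one must invoke the uniform bound supplied by the equicontinuity in Assumption (D) and Ranga Rao's theorem, exactly as in the proof of Theorem \ref{theorem1}. The rest --- the quantile coupling and the continuity/boundedness estimates --- is routine.
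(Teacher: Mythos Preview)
Your argument is correct and is precisely the standard coupling approach that the paper defers to by citing \cite[Corollary~1]{MP:13}; the paper gives no independent proof here. The induction on $t$, the quantile coupling via a common uniform $U_{i,t+1}$, the bound $\mathbb{P}(X_{i,t+1}^{n}\neq X_{i,t+1})=\mathbb{E}|p_{n,t}-p_{t}|$, and the reduction of $p_{n,t}\stackrel{p}{\rightarrow}p_{t}$ to the uniform connectivity convergence $\sup_{z}|\psi_{n,t}(z)-\psi_{t}(z)|\stackrel{p}{\rightarrow}0$ (already established in the proof of Theorem~\ref{theorem1} via Ranga Rao's theorem and Assumption~(D)) are exactly the ingredients used in \cite{MP:13}. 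Your identification of the delicate point---that evaluating at the random $z_{i}$ forces one to use the uniform bound rather than mere weak convergence of $\mu_{n,t}$---is also on target.

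One small remark on presentation: the phrase ``represent $X_{i,t+1}^{n}=\mathbb{I}(U_{i,t+1}\le p_{n,t})$'' tacitly assumes the processes $X^{n}$ for different $n$ are already driven by common innovation noise, so that a single $U_{i,t+1}$ serves for every $n$. This is the intended construction (and is needed for convergence in probability to make sense at all across $n$), but it is worth stating explicitly rather than as an afterthought, since without it the sequence $(X_{i,t+1}^{n})_{n}$ could fail to converge even when $p_{n,t}$ does.
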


\begin{proof}
The proof follows the same arguments as the proof of Corollary~1 of~\citet{MP:13}. 
\end{proof}

The following result assumes that the landscape is in equilibrium to simplify the recursion (\ref{MR:Eq5}). 

\begin{theorem} \label{theorem2}
If $ \sigma_{t} = \sigma $ for some measure $ \sigma $ and all $ t \geq 0 $, then $ \mu_{t} $ is absolutely continuous with respect to $ \sigma $ for all $ t \geq 0 $. The Radon-Nikod\'{y}m derivative of $ \mu_{t} $ with respect $ \sigma $, denoted by $ \frac{\partial \mu_{t}}{\partial \sigma} $, is given by the recursion
\begin{align}
\frac{\partial \mu_{t+1}}{\partial \sigma} (\theta,z) & =   \int_\th s(\eta) \frac{\partial \mu_{t}}{\partial \sigma}(\eta,z) P^{\ast}(\theta,d\eta)  \nonumber \\
& + \int_\th f\left( \int_\thom a(\tilde{\theta})D(z,\tilde{z})\frac{\partial \mu_{t}}{\partial \sigma} (\tilde{\theta},\tilde{z}) \sigma(d\tilde{\theta},d\tilde{z}); \eta \right) \left( 1 - \frac{\partial \mu_{t}}{\partial \sigma} (\eta,z) \right)  P^{\ast}(\theta,d\eta) . \label{theorem1:Eq5}
\end{align}
\end{theorem}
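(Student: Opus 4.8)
The plan is to induct on $t$, feeding the recursion (\ref{MR:Eq5}) of Theorem \ref{theorem1} through the adjoint identity (\ref{Def:Eq1}) that defines $P^{\ast}$. For absolute continuity, I would first note that the occupied patches form a subset of all patches, so $\int h\,d\mu_{n,t} = n^{-1}\sum_{i=1}^{n} X_{i,t}^{n} h(\theta_{i,t},z_{i}) \le n^{-1}\sum_{i=1}^{n} h(\theta_{i,t},z_{i}) = \int h\,d\sigma_{n,t}$ for every $h \in C^{+}(\Theta\times\Omega)$; passing to the limit and using $\sigma_{t}=\sigma$ gives $\int h\,d\mu_{t} \le \int h\,d\sigma$ for all such $h$, hence $\mu_{t} \le \sigma$ as measures and in particular $\mu_{t} \ll \sigma$, with density $q_{t}:=\partial\mu_{t}/\partial\sigma$ taking values in $[0,1]$ $\sigma$-a.e. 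It is also convenient to record that $\sigma$ disintegrates as $\sigma(d\theta,dz)=\pi(d\theta)\,\nu(dz)$, where $\nu$ is the (time-invariant) marginal of the patch locations: the location marginal of $\sigma_{t+1}$ equals that of $\sigma_{t}$ by the recursion of Lemma \ref{lemma1} (since $P(\theta,\cdot)$ is a probability measure), and disintegrating that recursion over $z$ shows the $z$-section of $\sigma$ is $P$-invariant for $\nu$-a.e.\ $z$, hence coincides with the (unique) stationary distribution $\pi$ for which $P^{\ast}$ is the dual.

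Next I would substitute $\mu_{t}=q_{t}\sigma$ and $\sigma_{t}=\sigma$ into (\ref{MR:Eq5}). The argument of $f$ becomes $\psi_{t}(z):=\int a(\tilde\theta)D(z,\tilde z)q_{t}(\tilde\theta,\tilde z)\sigma(d\tilde\theta,d\tilde z)$, and the second and third integrals combine into $\int \big(\int h(\eta,z)P(\theta,d\eta)\big) f(\psi_{t}(z);\theta)\,(1-q_{t}(\theta,z))\,\sigma(d\theta,dz)$; writing $g_{t}(\theta,z):= s(\theta)q_{t}(\theta,z)+f(\psi_{t}(z);\theta)(1-q_{t}(\theta,z))$, the whole right-hand side collapses to
\[
\int h\,d\mu_{t+1}=\int \Big(\int h(\eta,z)P(\theta,d\eta)\Big)\,g_{t}(\theta,z)\,\sigma(d\theta,dz).
\]
Then, holding $z$ fixed and applying the adjoint relation $\int (Pf)(\theta)g(\theta)\pi(d\theta)=\int f(\eta)\big(\int g(\theta)P^{\ast}(\eta,d\theta)\big)\pi(d\eta)$ — valid for indicators by (\ref{Def:Eq1}) and for bounded measurable $f,g$ by the usual simple-function approximation — with $f=h(\cdot,z)$ and $g=g_{t}(\cdot,z)$, and finally integrating over $z$ against $\nu$, I obtain $\int h\,d\mu_{t+1}=\int h(\theta,z)\big(\int g_{t}(\eta,z)P^{\ast}(\theta,d\eta)\big)\sigma(d\theta,dz)$ for all $h\in C^{+}(\Theta\times\Omega)$. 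Since $(\theta,z)\mapsto\int g_{t}(\eta,z)P^{\ast}(\theta,d\eta)$ is bounded and measurable and $C^{+}(\Theta\times\Omega)$ is measure-determining, uniqueness of the Radon--Nikod\'{y}m derivative yields $q_{t+1}(\theta,z)=\int g_{t}(\eta,z)P^{\ast}(\theta,d\eta)$, which, on expanding $g_{t}$, is exactly (\ref{theorem1:Eq5}); moreover $q_{t+1}\in[0,1]$ because $g_{t}\in[0,1]$ and $P^{\ast}$ is a probability kernel. With the base case $t=0$ already covered by the absolute-continuity argument, the induction closes.

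The step I expect to demand the most care is the dual-kernel manipulation: confirming that $\sigma$ genuinely factors through the stationary distribution $\pi$ (so that the $z$-independent dual $P^{\ast}$ appearing in (\ref{theorem1:Eq5}) is the correct object), extending the adjoint identity (\ref{Def:Eq1}) from sets to the bounded measurable integrand $g_{t}(\cdot,z)$, and carrying the $z$-dependence through by Fubini after the per-$z$ duality step. Folding the three terms of (\ref{MR:Eq5}) into the single kernel $g_{t}$, and checking the boundedness and measurability needed to invoke uniqueness of densities, are routine once $\mu_{t}=q_{t}\sigma$ is in hand.
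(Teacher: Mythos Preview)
Your proposal is correct and follows essentially the same route as the paper: write $\mu_t=q_t\sigma$, feed this into the measure recursion (\ref{MR:Eq5}), apply the dual-kernel identity (the paper's Corollary~\ref{Cor:dual}) to swap $P$ for $P^{\ast}$, and read off $q_{t+1}$ by uniqueness of the Radon--Nikod\'ym derivative. The differences are cosmetic: you fold the three terms of (\ref{MR:Eq5}) into a single integrand $g_t$ before applying duality, whereas the paper applies Corollary~\ref{Cor:dual} to each term separately; and you supply an explicit $\mu_{n,t}\le\sigma_{n,t}$ argument for absolute continuity where the paper simply cites \cite[Lemma~5]{MP:13b}. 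Your insistence that $\sigma$ factor as $\pi(d\theta)\,\nu(dz)$ so that the $\Theta$-only duality can be applied fibrewise in $z$ is exactly the mechanism the paper uses without comment; you are right that this needs uniqueness of the invariant measure, which in the paper's intended setting is supplied by the positive Harris assumption.
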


\begin{proof}
That $ \mu_{t} $ is absolutely continuous with respect to $ \sigma $ for all $ t \geq 0$ follows from the same arguments as \citet[Lemma 5]{MP:13b}. The recursion for the Radon-Nikod\'{y}m derivative of $ \mu_{t} $ with respect to $ \sigma $ requires the dual kernel (see Appendix B). Applying Corollary \ref{Cor:dual} to the integrals on the right hand side of recursion (\ref{MR:Eq5}), we can express the three terms as
\begin{align*}
\lefteqn{ \int_\thom s(\theta) \left\{ \int_\th h(\eta,z) P(\theta,d\eta) \right\} \mu_{t}(d\theta,dz)} \\  & = \int_\th h(\theta,z) \left\{ \int_\thom s(\eta) \frac{\partial \mu_{t}}{\partial \sigma}(\eta,z) P^{\ast}(\theta,d\eta)\right\} \sigma(d\eta,dz), \\
\lefteqn{\int_\thom \left\{ \int_\th h(\eta,z) P(\theta,d\eta) \right\}  f\left( \int_\thom a(\tilde{\theta})D(z,\tilde{z})\mu_{t}(d\tilde{\theta},d\tilde{z}); \theta \right) \sigma(d\theta,dz)}  \\
& =  \int_\thom  h(s,z) \left\{ \int_\th f\left( \int_\thom a(\tilde{\theta})D(z,\tilde{z})\mu_{t}(d\tilde{\theta},d\tilde{z}); \eta \right)  P^{\ast}(\theta,d\eta) \right\} \sigma (d\theta,dz),
\end{align*}
and
\begin{align*}
\lefteqn{\int_\thom \left\{ \int_\th h(\eta,z) P(\theta,d\eta) \right\}  f\left( \int_\thom a(\tilde{\theta})D(z,\tilde{z})\mu_{t}(d\tilde{\theta},d\tilde{z}); \theta \right) \mu_{t}(d\theta,dz)}  \\
& =  \int_\thom  h(s,z) \left\{ \int_\th f\left( \int_\thom a(\tilde{\theta})D(z,\tilde{z})\mu_{t}(d\tilde{\theta},d\tilde{z}); \eta \right)  \frac{\partial \mu_{t}}{\partial \sigma} (\eta,z) P^{\ast}(\theta,d\eta) \right\} \sigma (d\theta,dz).
\end{align*}
The recursion follows by combining these terms and noting that the Radon-Nikod\'{y}m derivative is uniquely defined up to a $ \sigma$-null set.
\end{proof}

\begin{theorem} \label{corollary2}
Assume that $ (X^{n}_{i,0},\theta_{i,0},z_{i}) \stackrel{d}{=} (X^{n}_{j,0},\theta_{j,0},z_{j}) $ for all $ i,j $. Suppose that 
\[
 \mathbb{P}(X_{i,0}^{n} = 1 \mid \theta_{i,0}=\theta, z_{i} = z) = q_{0}(\theta,z),
\]
for some function $ q_{0} : \Theta\times \Omega \rightarrow [0,1] $. Under the conditions of Corollary \ref{corollary1} and Theorem \ref{theorem2},
\begin{equation}
\mathbb{P} \left(X_{i,t} = 1 \mid \theta_{i,t} = \theta, z_{i} =z \right) = \frac{\partial \mu_{t}}{\partial \sigma} (\theta,z) \label{Cor2:proof:eq0}
\end{equation}
for all $ t \geq 0 $.
\end{theorem}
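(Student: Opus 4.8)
The plan is to prove the identity by induction on $ t $, writing $ q_{t}(\theta,z):=\frac{\partial \mu_{t}}{\partial \sigma}(\theta,z) $ and using two ingredients already established: the explicit one-step dynamics of the limiting Markov chain $ ((X_{i,t},\theta_{i,t},z_{i}),t\geq 0) $ from Corollary \ref{corollary1}, and the recursion (\ref{theorem1:Eq5}) for the Radon-Nikod\'{y}m derivative from Theorem \ref{theorem2}. A preliminary observation: since the hypothesis of Theorem \ref{theorem2} is that $ \sigma_{t}=\sigma $ for all $ t $, the measure $ \sigma $ is invariant under $ \sigma \mapsto \int_\om P(\eta,\cdot)\,\sigma(d\eta,\cdot) $; under the standing assumptions $ P $ has a unique invariant law $ \pi $, so the conditional $ \theta $-distribution of $ \sigma $ given the location equals $ \pi $ for $ \zeta $-a.e.\ location, i.e.\ $ \sigma=\pi\otimes\zeta $. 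Consequently the characteristic process $ (\theta_{i,t})_{t\geq 0} $, whose kernel does not involve $ z_{i} $, is independent of the static location $ z_{i} $, and $ \theta_{i,t} $ has marginal law $ \pi $ for every $ t $.

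For the base case I would show $ \mu_{0}=q_{0}\cdot\sigma $. Conditional on $ (\theta_{0}^{n},z^{n}) $ the initial occupancies are independent Bernoulli variables with means $ q_{0}(\theta_{i,0},z_{i}) $, so $ \int_\thom h\,d\mu_{n,0}=n^{-1}\sum_{i=1}^{n}X_{i,0}^{n}h(\theta_{i,0},z_{i}) $ has conditional mean $ \int_\thom h\,q_{0}\,d\sigma_{n,0} $ and conditional variance $ O(n^{-1}) $; the Chebyshev-type argument used in the proof of Lemma \ref{lemma1}, together with $ \sigma_{n,0}\stackrel{d}{\rightarrow}\sigma $, gives $ \int_\thom h\,d\mu_{n,0}\stackrel{p}{\rightarrow}\int_\thom h\,q_{0}\,d\sigma $, and comparison with $ \mu_{n,0}\stackrel{d}{\rightarrow}\mu_{0} $ yields $ \mu_{0}=q_{0}\cdot\sigma $. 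Passing to the limit in the hypothesis $ \mathbb{P}(X_{i,0}^{n}=1\mid\theta_{i,0}=\theta,z_{i}=z)=q_{0}(\theta,z) $ (the occupancies being $ 0 $--$ 1 $ valued and convergent in probability) identifies $ q_{0}=\frac{\partial\mu_{0}}{\partial\sigma} $ with $ \mathbb{P}(X_{i,0}=1\mid\theta_{i,0}=\theta,z_{i}=z) $, which is the claim at $ t=0 $.

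For the inductive step, assume $ \mathbb{P}(X_{i,t}=1\mid\theta_{i,t}=\eta,z_{i}=z)=q_{t}(\eta,z) $. By Corollary \ref{corollary1}, given $ (X_{i,t},\theta_{i,t},z_{i}) $ the variable $ X_{i,t+1} $ is Bernoulli with parameter $ s(\theta_{i,t})X_{i,t}+f(\psi_{t}(z_{i});\theta_{i,t})(1-X_{i,t}) $ and is conditionally independent of $ \theta_{i,t+1} $, while $ \psi_{t}(z)=\int_\thom a(\tilde\theta)D(z,\tilde z)\,\mu_{t}(d\tilde\theta,d\tilde z)=\int_\thom a(\tilde\theta)D(z,\tilde z)q_{t}(\tilde\theta,\tilde z)\,\sigma(d\tilde\theta,d\tilde z) $. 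Two applications of the tower property (conditioning first on $ (X_{i,t},\theta_{i,t},z_{i},\theta_{i,t+1}) $, then on $ \theta_{i,t} $), together with the fact that given $ \theta_{i,t} $ the variable $ X_{i,t} $ is independent of $ \theta_{i,t+1} $, give
\[
\mathbb{P}(X_{i,t+1}=1\mid\theta_{i,t+1}=\theta,z_{i}=z)=\int_\th\bigl[s(\eta)q_{t}(\eta,z)+f(\psi_{t}(z);\eta)\bigl(1-q_{t}(\eta,z)\bigr)\bigr]\,\mathbb{P}(\theta_{i,t}\in d\eta\mid\theta_{i,t+1}=\theta,z_{i}=z).
\]
Since the characteristic process is independent of $ z_{i} $ and $ \theta_{i,t}\sim\pi $, the joint law of $ (\theta_{i,t},\theta_{i,t+1}) $ is $ \pi(d\eta)P(\eta,d\theta) $, which by the defining property (\ref{Def:Eq1}) of the dual kernel equals $ \pi(d\theta)P^{\ast}(\theta,d\eta) $; hence $ \mathbb{P}(\theta_{i,t}\in d\eta\mid\theta_{i,t+1}=\theta,z_{i}=z)=P^{\ast}(\theta,d\eta) $. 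Substituting, the right-hand side is exactly recursion (\ref{theorem1:Eq5}) evaluated with $ \frac{\partial\mu_{t}}{\partial\sigma}=q_{t} $, so it equals $ \frac{\partial\mu_{t+1}}{\partial\sigma}(\theta,z) $, which closes the induction.

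I expect the main obstacle to be the time-reversal step: establishing rigorously that the conditional law of $ \theta_{i,t} $ given $ (\theta_{i,t+1},z_{i}) $ is $ P^{\ast}(\theta,\cdot) $. This requires extracting, from the hypothesis $ \sigma_{t}\equiv\sigma $, both the stationarity of $ (\theta_{i,t}) $ at the invariant law $ \pi $ and the product form $ \sigma=\pi\otimes\zeta $ (equivalently, independence of the characteristic process from the location), and then invoking the dual-kernel relation (\ref{Def:Eq1}); the bookkeeping of the several conditional-independence statements needed to drop the conditioning on $ \theta_{i,t+1} $ also needs care. By comparison, the base-case law-of-large-numbers identification of $ \mu_{0} $ with $ q_{0}\cdot\sigma $ is routine given the tools in the proof of Lemma \ref{lemma1}.
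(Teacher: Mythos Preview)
Your proposal is correct and follows essentially the same route as the paper: an induction on $t$, identifying $\frac{\partial\mu_{0}}{\partial\sigma}=q_{0}$ at the base step and then using the dual kernel to pass from the law of $(X_{i,t},\theta_{i,t})$ to the conditional $\mathbb{P}(X_{i,t+1}=1\mid\theta_{i,t+1}=\theta,z_{i}=z)$. The only cosmetic differences are that the paper establishes the base case via uniform integrability of $\int h\,d\mu_{n,0}$ rather than a conditional-variance Chebyshev bound, and in the inductive step it applies Corollary~\ref{Cor:dual} directly to the integrals $\int_\thom\{\int_{A}P(\theta,d\eta)\}(\cdots)\,\sigma(d\theta,dz)$ rather than phrasing the same identity probabilistically as $\mathbb{P}(\theta_{i,t}\in d\eta\mid\theta_{i,t+1}=\theta)=P^{\ast}(\theta,d\eta)$; the obstacle you flag (extracting the product form of $\sigma$ and stationarity of $\theta_{i,t}$) is thereby sidestepped, since the algebraic dual-kernel relation suffices without first proving independence of $\theta_{i,t}$ and $z_{i}$.
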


\begin{proof}
We first show that $ \sigma $ is the distribution of $ (\theta_{i,0},z_{i}) $. For any $ h \in C^{+}(\Theta\times\Omega)$, the sequence of random variables $ \int_\thom h(\theta,z) \sigma_{n,0}(d\theta,dz) $ is uniformly integrable. This sequence of random variables converges in probability as the limiting measure $ \sigma $ is non-random. Therefore,
\begin{equation}
\mathbb{E}\left(\int_\thom h(\theta,z)\sigma_{n,0}(d\theta,dz) \right) \rightarrow \int_\thom h(\theta,z) \sigma(d\theta,dz). \label{Cor2:proof:eq1}
\end{equation}
As  $ (\theta_{i,0},z_{i}) \stackrel{d}{=} (\theta_{j,0},z_{j}) $ for all $ i,j $,
\begin{equation}
\mathbb{E}\left(\int_\thom h(\theta,z)\sigma_{n,0}(d\theta,dz) \right)  =
\mathbb{E}\left(h(\theta_{i,0},z_{i}) \right).  \label{Cor2:proof:eq2}
\end{equation}
Since (\ref{Cor2:proof:eq1}) and (\ref{Cor2:proof:eq2}) hold for all $ h \in C^{+}(\Theta\times\Omega)$, we see that $ \sigma $ is the distribution of $ (\theta_{i,0},z_{i}) $. 

Similarly, we can identify the Radon-Nikod\'{y}m derivative of $ \mu_{0} $ with respect to $ \sigma $ as the function $ q_{0}(\theta,z) $. For any $ h \in C^{+}(\Theta\times\Omega)$, the sequence of random variables $ \int_\thom h(\theta,z) \mu_{n,0}(d\theta,dz) $  convergences in probability and is uniformly integrable. Therefore,
\[
\mathbb{E}\left(\int_\thom h(\theta,z)\mu_{n,0}(d\theta,dz) \right) \rightarrow \int_\thom h(\theta,z) \mu_{0}(d\theta,dz) = \int_\thom h(\theta,z) \frac{\partial \mu_{0}}{\partial \sigma}(\theta,z) \sigma(d\theta,dz).
\]
As  $ (X^{n}_{i,0},\theta_{i,0},z_{i}) \stackrel{d}{=} (X^{n}_{j,0},\theta_{j,0},z_{j}) $ for all $ i,j $,
\begin{align}
\mathbb{E}\left(\int_\thom h(\theta,z)\mu_{n,0}(d\theta,dz) \right) & = \mathbb{E}\left(h(\theta_{i,0},z_{i}) X_{i,0}^{n}\right) \nonumber\\
& = \int_\thom h(\theta,z) \mathbb{P} \left(X_{i,0}^{n} = 1 \mid \theta_{i,0} = \theta , z_{i} = z \right)  \sigma(d\theta,dz) \nonumber\\
& = \int_\thom h(\theta,z) q_{0}(\theta,z)  \sigma(d\theta,dz). \label{Cor2:proof:eq3}
\end{align}
As equation (\ref{Cor2:proof:eq3}) holds for all $ h \in C^{+}(\Theta\times\Omega) $ and the Radon-Nikod\'{y}m derivative is unique, it follows that
\[
\frac{\partial \mu_{0}}{\partial \sigma}(\theta,z) = q_{0}(\theta,z) .
\]
Thus, we have established equality (\ref{Cor2:proof:eq0}) for $ t = 0 $. The proof for $ t > 0 $ proceeds by induction. Although we will always be conditioning on the patch location, this will not be made explicit to simplify the expressions. Let $ \psi_{t}(z) = \int_\thom a(\tilde{\theta})D(z,\tilde{z}) \mu_{t}(d\tilde{\theta},d\tilde{z}) $. Then $ (X_{i,t}, \theta_{i,t}) $ is a Markov chain on $ \{0,1\}\times \Theta $ with transition kernel 
\begin{align*}
\mathbb{P} \left( X_{i,t+1} = 1, \theta_{t+1} \in A \mid X_{i,t} = x, \theta_{i,t} =\theta \right) & =  \left(s(\theta)x + f(\psi_{t}(z);\theta)(1-x)\right) \int_{A} P(\theta, d\eta) \\
\mathbb{P} \left( X_{i,t+1} = 0, \theta_{t+1} \in A \mid X_{i,t} = x, \theta_{i,t} =\theta \right) & =  \left((1-s(\theta))x + (1-f(\psi_{t}(z);\theta))(1-x)\right) \int_{A} P(\theta, d\eta),
\end{align*}
for any measurable set $ A \subset \Theta$. To compute  $ \mathbb{P}(X_{i,t} = 1 \mid \theta_{i,t} = \theta ) $, note that
\begin{align}
&\mathbb{P}(X_{i,t+1} = 1, \theta_{i,t+1} \in A )  \\
= & \int_\thom \mathbb{P}\left( X_{i,t+1}=1, \theta_{i,t+1} \in A \mid X_{i,t} =1, \theta_{i,t} = \theta\right) \mathbb{P} \left(X_{i,t} =1 \mid \theta_{i,t} = \theta \right) \sigma(d\theta,dz)  \nonumber\\
  &  + \int_\thom \mathbb{P}\left( X_{i,t+1}=1, \theta_{i,t+1} \in A \mid X_{i,t} = 0, \theta_{i,t} = \theta\right) \mathbb{P} \left(X_{i,t} = 0 \mid \theta_{i,t} = \theta \right) \sigma(d\theta,dz) \nonumber\\
 = &  \int_\thom  \left\{\int_{A} P(\theta, d\eta)\right\} s(\theta) \mathbb{P} \left(X_{i,t} =1 \mid \theta_{i,t} = \theta \right) \sigma(d\theta,dz)  \nonumber\\
  &  + \int_\thom \left\{\int_{A} P(\theta, d\eta)\right\} f(\psi_{t}(z);\theta) \left( 1- \mathbb{P} \left(X_{i,t} = 1 \mid \theta_{i,t} = \theta \right) \right) \sigma(d\theta,dz). \label{CE:Eq1}
\end{align}
Applying Corollary \ref{Cor:dual} to the integrals in (\ref{CE:Eq1}) gives
\begin{align}
\lefteqn{\int_\thom  \int_\th \bone(\eta \in A) P(\theta, d\eta) s(\theta) \mathbb{P} \left(X_{i,t} =1 \mid \theta_{i,t} = \theta \right) \sigma(d\theta,dz)}  \nonumber\\
  = &  \int_\thom  \left\{\int_\th P^{\ast}(\theta,d\eta) s(\eta) \mathbb{P}\left(X_{i,t} =1 \mid \theta_{i,t} = \eta\right) \right\} \bone(\theta \in A)  \sigma (d\theta,dz) \label{CE:Eq2}
\end{align}
and
\begin{align}
\lefteqn{\int_\thom \int_\th \bone(\eta \in A ) P(\theta, d\eta) f(\psi_{t}(z);\theta)  \left( 1- \mathbb{P} \left(X_{i,t} = 1 \mid \theta_{i,t} = \theta \right) \right) \sigma(d\theta,dz)}  \nonumber\\
  = & \int_\thom  \left\{\in_\th P^{\ast}(\theta,d\eta) f(\psi_{t}(z);\eta)   \left( 1 - \mathbb{P}\left(X_{i,t} =1 \mid \theta_{i,t} = \eta \right)\right) \right\} \bone(\theta \in A)  \sigma (d\theta,dz). \label{CE:Eq3}
\end{align}
Substituting (\ref{CE:Eq2}) and (\ref{CE:Eq3}) into equation (\ref{CE:Eq1}) yields
\begin{align*}
& \mathbb{P}(X_{i,t+1} = 1, \theta_{i,t+1} \in A ) \\
& =  \int_\thom  \left\{\int_\th P^{\ast}(\theta,d\eta) s(\eta) \mathbb{P}\left(X_{i,t} =1 \mid \theta_{i,t} = \eta \right) \right\} \bone(\theta \in A)  \sigma (d\theta,dz) \\
&  +  \int_\thom  \left\{ \int_\th   f(\psi_{t}(z);\eta)  P^{\ast}(\theta,d\eta) s(\eta) \left( 1 - \mathbb{P}\left(X_{i,t} =1 \mid \theta_{i,t} = \eta\right)\right) \right\} \bone(\theta \in A)  \sigma (d\theta,dz) .
\end{align*}
As the Radon-Nikod\'{y}m derivative is unique up to a $ \sigma $-null set,
\begin{align*}
\lefteqn{\mathbb{P} \left(X_{i,t+1} = 1 \mid \theta_{i,t+1} = \theta \right) } \nonumber\\
 = &  \int_\th s(\eta)  \mathbb{P}\left(X_{i,t} =1 \mid \theta_{i,t} = \eta\right)   P^{\ast}(\theta,d\eta)  + \int_\th \ f(\psi_{t}(z);\eta)  \left( 1 -  \mathbb{P}\left(X_{i,t} =1 \mid \theta_{i,t} = r\right) \right) P^{\ast}(\theta,d\eta).  
\end{align*}
Comparing with (\ref{theorem1:Eq5}), we see that if $$ \mathbb{P} \left(X_{i,t} = 1 \mid \theta_{i,t} = \theta \right) = \frac{\partial \mu_{t}}{\partial \sigma} (\theta,z),$$ then  $$ \mathbb{P} \left(X_{i,t+1} = 1 \mid \theta_{i,t+1} = \theta \right) = \frac{\partial \mu_{t+1}}{\partial \sigma} (\theta,z) ,$$ for all $ t \geq 0 $.
\end{proof}

\subsection{Proof of results from Section 3} \label{Subsec:Proof3}

\begin{proof}[Proof of Lemma \ref{lemma2}] \label{Proof:lemma2}

For any $ h \in C(\Theta\times\Omega) $, $\mathbb{E}\left(h(\theta_{i,t},z_{i})\right) = \mathbb{E} \left(\mathbb{E}\left( h(\theta_{i,t},z_{i})\mid \theta_{i,0},z_{i}\right) \right) $. Let $ P^{t} $ be the $ t $-step transition kernel of the Markov chain for patch characteristic. As this Markov chain is positive Harris and aperiodic, it has a unique invariant measure $ \pi $ and, by \citet[Theorem 13.3.3]{MT:96},
$$
\mathbb{E} \left( h(\theta_{i,t},z_{i}) \mid \theta_{i,0} = \theta, z_{i} = z \right) = 
\int_\th h(\eta,z) P^{t}(\theta,d\eta) \rightarrow \int_\th h(\eta,z) \pi(d\eta),
$$
for every $ (\theta, z) $ as $ t \rightarrow \infty $. By the Dominated Convergence Theorem,
\begin{align}
\lim_{t\rightarrow\infty} \mathbb{E}\left(h(\theta_{i,t},z_{i})\right) = \mathbb{E} \left(\int_\th h(\eta,z_{i}) \pi(d\eta) \right) = \int_\om \int_\th h(\eta,z) \pi(d\eta) \zeta(z) dz. \label{Proof3:Eq1}
\end{align}
As (\ref{Proof3:Eq1}) holds for all $ h \in C(\Theta\times\Omega) $, this proves the limit (\ref{lemma2:Eq1})
\end{proof}

The results given in Section \ref{Sec:Asym} follow as special cases of the results in this appendix. The assumptions required for these results to hold are rather weak. Assumption (B) requires $ \Omega $ to be compact, that is a closed and bounded subset of $ \mathbb{R}^{d} $. The condition imposed by Assumption (B) on $ \Theta $ is trivially satisfied when $ \Theta $ is a finite set, so too Assumptions (A) and (E).  Assumption (C) will be satisfied when $ \Theta $ is a finite set if for each $ \theta \in \Theta $, the colonisation function $ f(\cdot,\theta) $ is Lipschitz. This is a fairly weak requirement since in most cases the colonisation function is taken to be smooth. Even for general state spaces, Assumptions (A), (B) and (E) are not very restrictive. For example, if the Markov chain for the patch characteristic can be expressed as $ \theta_{t} = F(\theta_{t-1},W_{t}) $ with the $ W_{t} $ independent and identically distributed random variables taking values in $ \mathbb{R}^{m} $ and $ F :\Theta \times \mathbb{R}^{m} \rightarrow \Theta $ a smooth function, then Assumption (E) is satisfied. Similar to the finite state space case, Assumption (C) will be satisfied for a general state space if for each $ \theta \in \Theta $, the colonisation function $ f(\cdot,\theta) $ is Lipschitz and the Lipschitz constant is bound on $ \Theta $. Finally, Assumption (D) is satisfied by the typical choice $ D(z,z^{\prime}) = \exp(-\alpha\|z- z^{\prime}\|) $, but range limited dispersion kernels such as $ D(z,z^{\prime}) \propto \bone(\|z-z^{\prime}\| \leq r) $ will not satisfy the lower bound imposed in Assumption (D).

Assume now that the $ (X_{i,0}^{n},\theta_{i,0},z_{i}) $ are independent and identically distributed random variables. By  the law of large numbers and Lemma \ref{lemma2},
$$
\int_\thom h(\theta,z) \sigma_{n,0}(d\theta,dz) \stackrel{d}{\rightarrow} \int_\om \int_\th
h(\theta,z) \pi(d\theta) \zeta(z)dz, \quad \mbox{for all } h \in
C^{+}(\Theta\times\Omega).
$$
Therefore, $ \sigma_{n,0} $ converges in distribution to the non-random measure $ \pi \times \zeta $ and Assumption (F) holds. Furthermore, $ \sigma_{t} = \pi \times \zeta$ as $ (\theta_{i,t},z_{i}) \stackrel{d}{=} (\theta_{i,0},z_{i}) $ for all $ i $. The law of large numbers can also be used to show $ \mu_{n,0} \stackrel{d}{\rightarrow} \mu_{0} $. As in the proof of Theorem \ref{corollary2}, if equation (\ref{thmAsym1:eq1}) holds, then $ q_{0} $ is the Random-Nikod\'{y}m derivative of $ \mu_{0} $ with respect to $ \sigma $. Theorem \ref{thmAsym1} now follows from Corollary \ref{corollary1} and Theorem \ref{theorem2}. Theorem \ref{thmAsym2} follows from Theorem \ref{corollary2} and Theorem \ref{thmAsym3} follows from Theorem \ref{theorem1}.

\section{Appendix B --- Dual process construction}

The dual kernel has been used by various authors studying Markov chains and processes \citep[see][and references therein]{BPZ:95}. As we have been unable to find anything in the literature dealing explicitly with the case of interest here, we state the definition of the dual kernel and some basic results. In the following, $(S, \Sigma) $ denotes a general measurable space. If $ P = \{ P(x,A), x \in S, A \in \Sigma \} $ is such that (i) for each $ A \in \Sigma $, $ P(\cdot,A) $ is a non-negative measurable function on $ S $, and (ii) for each $ x \in S $, $  P(x,\cdot) $ is a measure on $ (S,\Sigma) $ with $ P(x,S) \leq 1 $, then we call $ P $ a sub-transition kernel. If $ P(x,S) = 1 $ for all $ x \in S $, then $ P $ is a transition kernel \citep[pg. 65]{MT:96}

\begin{defn} \label{Def:dual}
Let $P$ be a sub-transition kernel on $(S,\Sigma)$ and let $\pi$ be a $ \sigma$-finite measure on $(S,\Sigma) $. If there exists a sub-transition kernel $P^{\ast} $ such that
\begin{equation}
\int_{A} \pi(dx) P(x,B) = \int_{B} \pi(dx) P^{\ast}(x,A), \label{Def:Eq1}
\end{equation}
for all $A,B \in \Sigma $, then $ P^{\ast} $ is called a \emph{dual of $P$ with respect to} $\pi$. If
\begin{equation}
\int_{A} \pi(dx) P(x,B) = \int_{B} \pi(dx) P(x,A), \label{Def:Eq1a}
\end{equation}
for all $ A,B \in \Sigma $, then $P$ is said to be \emph{reversible with respect to} $\pi$.
\end{defn}

We shall see that if $\pi$ is a subinvariant measure for $P$, then the dual of $P$ with respect to~$\pi$ is determined uniquely $\pi$-almost everywhere, in that, for all $A\in \Sigma$, $P^{\ast}(x,A)$ is the same for $\pi$-almost all $x\in S$. By setting $B$ equal to $S$ in (\ref{Def:Eq1a}), we notice that if $P$ is reversible with respect to $\pi$, then $\pi$ is an invariant measure for $P$. More generally, we have the following.

\begin{theorem} \label{Thm:dual}
Let $ P $ be a sub-transition kernel on $ (S,\Sigma) $ and let $ \pi $ be a $ \sigma$-finite measure on $ (S,\Sigma) $. Then $ \pi $ is a
subinvariant measure for $P$ if and only if there exists a dual $P^{\ast}$ for $P$ with respect to $\pi$. Further, $ \pi $ is an invariant measure for $ P $ if and only if $ P^{\ast} $ is a transition kernel. If $ P^{\ast} $ is dual for $ P $, then $ \pi $ is invariant for $ P^{\ast} $ if and only if $ P $ is a transition kernel.
\end{theorem}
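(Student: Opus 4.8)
The plan is to construct the dual $P^{\ast}$ via the Radon--Nikod\'ym theorem and then read off all three equivalences from the defining identity (\ref{Def:Eq1}) by specialising $A$ or $B$ to $S$. The easy implications go first: if a dual $P^{\ast}$ exists, then putting $A=S$ in (\ref{Def:Eq1}) and using $P^{\ast}(x,S)\le1$ gives
\[
\int_S \pi(dx)\,P(x,B)\;=\;\int_B \pi(dx)\,P^{\ast}(x,S)\;\le\;\pi(B)\qquad\text{for all }B\in\Sigma,
\]
so $\pi$ is subinvariant; and $\pi$ is \emph{invariant} for $P$ exactly when equality holds here for all $B$, i.e.\ when $P^{\ast}(x,S)=1$ for $\pi$-almost every $x$, that is, when $P^{\ast}$ is a transition kernel. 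Putting $B=S$ instead gives $\int_A\pi(dx)\,P(x,S)=\int_S\pi(dx)\,P^{\ast}(x,A)$, so $\pi$ is invariant for $P^{\ast}$ exactly when $\int_A\pi(dx)\,P(x,S)=\pi(A)$ for all $A$, i.e.\ when $P(x,S)=1$ for $\pi$-almost every $x$, that is, when $P$ is a transition kernel. Thus only the existence of $P^{\ast}$ under subinvariance requires real work.

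For that, assume $\pi$ subinvariant, fix $A\in\Sigma$, and set $\kappa_A(B):=\int_A\pi(dx)\,P(x,B)$ for $B\in\Sigma$. Since $P(x,\cdot)$ is a measure for each $x$, monotone convergence makes $\kappa_A$ a measure on $(S,\Sigma)$, and subinvariance gives $\kappa_A(B)\le\int_S\pi(dx)\,P(x,B)\le\pi(B)$; hence $\kappa_A\ll\pi$ and $\kappa_A$ is $\sigma$-finite. By Radon--Nikod\'ym there is a $\pi$-a.e.\ unique measurable $g_A\colon S\to[0,\infty)$ with $\kappa_A(B)=\int_B g_A\,d\pi$, and one sets $P^{\ast}(x,A):=g_A(x)$; then (\ref{Def:Eq1}) holds by construction and $P^{\ast}(\cdot,A)$ is measurable. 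From $\kappa_S\le\pi$ and the monotonicity $\kappa_A\le\kappa_{A'}$ ($A\subseteq A'$) one gets $0\le P^{\ast}(x,A)\le P^{\ast}(x,S)\le1$ for $\pi$-a.e.\ $x$, and from $\kappa_{\bigcup_k A_k}=\sum_k\kappa_{A_k}$ one gets $P^{\ast}(x,\bigcup_k A_k)=\sum_k P^{\ast}(x,A_k)$ for $\pi$-a.e.\ $x$, for each fixed countable disjoint family; uniqueness of $P^{\ast}$ up to a $\pi$-null set follows from uniqueness of Radon--Nikod\'ym derivatives.

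The step I expect to be the main obstacle is upgrading these ``$\pi$-a.e.'' statements to a bona fide sub-transition kernel: one must pick, simultaneously over all $A\in\Sigma$, versions of $P^{\ast}(x,\cdot)$ that are genuine measures of total mass $\le1$ for \emph{every} $x$, whereas the construction above controls each exceptional set only up to the particular decomposition used. This is exactly the difficulty encountered when building regular conditional distributions, and I would handle it the same way. In all our applications $S$ is a compact metric space, so $\Sigma$ is countably generated; fix a countable algebra $\mathcal A$ generating $\Sigma$, whereupon the finite-additivity, monotonicity and boundedness requirements reduce to countably many $\pi$-a.e.\ identities and hence hold off a single $\pi$-null set $N$. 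Off $N$ the map $A\mapsto P^{\ast}(x,A)$ is a bounded, monotone, finitely additive set function on $\mathcal A$ that is continuous at $\emptyset$ (dominated convergence against a $\sigma$-finite exhaustion, using $\int_{A_1}P(\cdot,B)\,d\pi\le\pi(B)$), so it extends by Carath\'eodory to a measure $P^{\ast}(x,\cdot)$ on $\Sigma$ of mass $\le1$; on $N$ set $P^{\ast}(x,\cdot)\equiv0$. A monotone-class argument propagates (\ref{Def:Eq1}) from $\mathcal A$ to all of $\Sigma$, and combining this with the first paragraph yields all three assertions; if one prefers not to assume countable generation, the same conclusion can be quoted from the standard existence theorem for the adjoint of a sub-Markovian kernel.
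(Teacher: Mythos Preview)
Your proof is correct and follows the same Radon--Nikod\'ym route as the paper: define the measure $\kappa_A$ (the paper calls it $\eta_A$), use subinvariance to get absolute continuity with respect to $\pi$, take the density to be $P^{\ast}(\cdot,A)$, and then read off the three equivalences by setting $A=S$ or $B=S$ in the duality relation. The arguments and computations are essentially identical.

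The one substantive difference is that you take the regularity issue more seriously than the paper does. The paper is content to establish $P^{\ast}(x,\emptyset)=0$, countable additivity, and $P^{\ast}(x,S)\le1$ each only for $\pi$-almost all $x$, with the exceptional null set depending on the particular disjoint family chosen; it does not carry out the upgrade to a genuine kernel valid for every $x$. You correctly flag this as the usual regular-conditional-distribution difficulty and sketch the standard remedy via a countable generating algebra (available here since $\Theta$ is compact metric in all applications). Your sketch of the Carath\'eodory extension step is a little brisk---continuity at $\emptyset$ for fixed $x$ off a single null set is where the real work lies, and one typically needs either a compact class argument or the Borel-isomorphism trick rather than just dominated convergence---but the strategy is sound and the result is classical. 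For the paper's purposes the gap is harmless, since $P^{\ast}$ is only ever used under integration against $\pi$ (Corollary~\ref{Cor:dual}), so the $\pi$-a.e.\ version suffices; but your instinct to address it is the right one.
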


\begin{proof}
Let $P$ be a sub-transition kernel on $(S,\Sigma)$ and let $ \pi $ be a $\sigma$-finite measure on $(S,\Sigma)$. We first show that if $\pi$ 
is a subinvariant measure for $P$, then there exists a sub-transition kernel $ P^{\ast} $ satisfying Definition \ref{Def:dual}. Suppose $\pi
$ is subinvariant for~$P$. For $A \in\Sigma$, define 
$$
\eta_{A}(\cdot) := \int_{A} \pi(dx)P(x,\cdot).
$$
It is a measure on $(S,\Sigma)$ because $P(x,\cdot)$ is a measure on $(S,\Sigma)$. It is also clear that $\eta_{A}$ is absolutely continuous with respect to $\pi$, because if $N\in\Sigma $ is any $\pi$-null set then
$$
\eta_{A}(N)=\int_{A}\pi(dx) P(x,N)\leq \int_{S} \pi(dx) P(x,N) \leq \pi(N) = 0.
$$
So, by the Radon-Nikod\'{y}m theorem, there exists a function $P^{\ast} : S \times \Sigma \rightarrow [0,\infty) $ such that $P^{\ast}(\cdot,A)$  is a $\Sigma$-measurable function, and for all $B\in\Sigma$,
$$
\int_{A} \pi(dx) P(x,B) = \eta_{A}(B) = \int_{B} \pi(dx) P^{\ast}(x,A).
$$
Hence, $P^{\ast}$ is determined uniquely $\pi$-almost everywhere by equation (\ref{Def:Eq1}). It remains to show that, for $\pi$-almost
all $x \in S$, $P^{\ast}(x,\cdot)$ is a measure on $(S,\Sigma)$ with $P^{\ast}(x,S) \leq 1$.

For any $A\in\Sigma $, $P^{\ast}(\cdot,A) $ is the Radon-Nikod\'{y}m derivative of $\eta_{A}$ with respect to $\pi$. As $\eta_{\emptyset}$
is the null measure, $P^{\ast}(x,\emptyset) = 0$ for $\pi$-almost all $ x \in S $. To show that $P^{\ast}(x,\cdot)$ is countably additive, let $ \{B_{k}\} $ be a sequence of pairwise disjoint sets in $ \Sigma$. We want to show that the Radon-Nikod\'{y}m derivative of $ \eta_{\cup_{k} B_{k}} $ with respect to $ \pi $ is $ \sum_{k} P^{\ast}(\cdot,B_{k}) $. For any $ A \in \Sigma $,
\begin{align*}
\eta_{\cup_{k} B_{k}} (A) & = \int_{\cup_{k} B_{k}} \pi(dx) P(x,A)  \\
& = \sum_{k} \int_{B_{k}} \pi(dx) P(x,A) \\
& = \sum_{k} \int_{A} \pi(dx) P^{\ast}(x,B_{k}) \\
& = \int_{A} \pi(dx) \sum_{k} P^{\ast}(x,B_{k}).
\end{align*}
Hence, $ P^{\ast}(x,\cup_{k} B_{k}) = \sum_{k} P^{\ast}(x,B_{k}) $ for $\pi$-almost all $ x \in S $.  Finally, since $\pi$ is subinvariant for $P$, we have, for any $A \in \Sigma $,
$$
\int_{A} \pi(dx) P^{\ast}(x,S) = \int_{S} \pi(dx) P(x,A) \leq \pi(A).
$$
Hence, by the Radon-Nikod\'{y}m Theorem, $P^{\ast}(x,S) \leq 1 $ for $\pi$-almost all $x \in S$.

We now show that if there exists a dual $ P^{\ast} $ for $ P $ with respect to $\pi $, then $ \pi $ is subinvariant. Since $ P^{\ast} $ is
a sub-transition kernel, $ P^{\ast}(x,S) \leq 1 $ for all $ x \in S$. On setting $B$ equal to $S$ in equation (\ref{Def:Eq1}) we see that
\begin{equation}
\int_{S} \pi(dx) P(x,A) 
  = \int_{A} \pi(dx) P^{\ast}(x,S) \leq \int_{A} \pi(dx) = \pi(A),
  \label{Proof:Thm2:eq1}
\end{equation}
that is, $\pi$ is subinvariant for $ P $. This completes the proof of the first part of Theorem~\ref{Thm:dual}.

To prove the second part we note that if $P^{\ast}$ is a transition kernel  then $ P^{\ast}(x,S) = 1 $ for all $x \in S$. In that case, inequality  (\ref{Proof:Thm2:eq1}) becomes equality, and $\pi$ is seen to be invariant. On the other hand, if $\pi$  is invariant for $P$, then
$$
\pi(A) = \int_{S} \pi(dx) P(x,A) = \int_{A} \pi(dx) P^{\ast}(x,S), 
$$
for all $A\in \Sigma $. Therefore, $ P^{\ast}(x,S) = 1 $ for $\pi$-almost all $x \in S$, and $P^{\ast}$ is a transition kernel.  The final part is proved in similar vein.
\end{proof}

\begin{corollary}\label{Cor:dual}
Let $ \phi $ and $ \psi $ be $ \Sigma $-measurable functions. Then, under the conditions of Theorem \ref{Thm:dual}, the dual $ P^{\ast} $ satisfies
$$
\int_{S} \pi(dx) \phi(x) \int_{S} P(x,dy) \psi(y)  = \int_{S} \pi(dx) \psi(x) \int_{S} P^{\ast}(x,dy) \phi(y).
$$
\end{corollary}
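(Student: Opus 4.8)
The plan is to obtain the identity from the defining relation (\ref{Def:Eq1}) by the standard approximation argument (``the standard machine''): check it for indicator functions, extend by linearity to simple functions, and then pass to monotone limits, applying this first in the argument $\phi$ and then in the argument $\psi$. Before starting I would fix, once and for all, the regular version of $P^{\ast}$ constructed in the proof of Theorem \ref{Thm:dual}, so that $P^{\ast}(x,\cdot)$ is a genuine sub-probability measure for $\pi$-almost every $x$, while $P^{\ast}(\cdot,A)$ is $\Sigma$-measurable for each $A\in\Sigma$. With this version fixed, $x\mapsto\int_{S}P^{\ast}(x,dy)\phi(y)$ is well defined $\pi$-a.e.\ and, being an increasing limit of finite linear combinations of the functions $P^{\ast}(\cdot,A)$, is itself $\Sigma$-measurable, so both sides of the asserted equation make sense for non-negative measurable $\phi,\psi$.

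First, for $\phi=\bone_{A}$ and $\psi=\bone_{B}$ the claimed identity reduces to $\int_{A}\pi(dx)\,P(x,B)=\int_{B}\pi(dx)\,P^{\ast}(x,A)$, which is exactly (\ref{Def:Eq1}). Next, keeping $\psi=\bone_{B}$ fixed, both sides are linear in $\phi$, so the identity holds for every non-negative simple $\phi$; for non-negative measurable $\phi$ take simple $\phi_{n}\uparrow\phi$ and apply monotone convergence on the left (the integrand $\phi_{n}(x)P(x,B)$ increases to $\phi(x)P(x,B)$) and twice on the right (for $\pi$-a.e.\ $x$, $\int P^{\ast}(x,dy)\phi_{n}(y)\uparrow\int P^{\ast}(x,dy)\phi(y)$ against the measure $P^{\ast}(x,\cdot)$, and then in $\pi$ restricted to $B$). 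This gives the identity for all non-negative measurable $\phi$ and all indicator $\psi$. Now fix such a $\phi$: both sides are linear in $\psi$, so the identity extends to non-negative simple $\psi$, and then to all non-negative measurable $\psi$ by taking $\psi_{n}\uparrow\psi$ and using monotone convergence against $P(x,\cdot)$ on the left and the trivial increase of $\psi_{n}(x)$ on the right (the factor $\int P^{\ast}(x,dy)\phi(y)$ being non-negative). Finally, for general $\Sigma$-measurable $\phi,\psi$ write $\phi=\phi^{+}-\phi^{-}$ and $\psi=\psi^{+}-\psi^{-}$, expand bilinearly into four non-negative pieces, and subtract; this is legitimate whenever the integrals are finite, in particular whenever $\phi,\psi$ are bounded and $\pi$-integrable, which is the case in every application of the corollary in this paper (there $\pi$ is a probability measure and $\phi,\psi$ take values in $[0,1]$).

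The only step that is more than mechanical is the monotone passage to the limit on the right-hand side: it uses that $P^{\ast}(x,\cdot)$ is countably additive for $\pi$-almost every $x$, which is precisely what the proof of Theorem \ref{Thm:dual} supplies, and it requires that the countably many $\pi$-null exceptional sets accumulated along the approximation have a $\pi$-null union, so that a single good version of $P^{\ast}$ works throughout. Everything else is the routine functional monotone-class bookkeeping.
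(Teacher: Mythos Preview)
Your argument is correct and follows essentially the same route as the paper's own proof: verify the identity for indicator functions via (\ref{Def:Eq1}), extend by linearity to simple functions, pass to non-negative measurables by monotone convergence, and finish by splitting into positive and negative parts. You are somewhat more careful than the paper in doing the extension one variable at a time and in flagging the need for a regular version of $P^{\ast}$ and for integrability in the signed case, but the underlying approach is the same standard-machine argument.
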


\begin{proof}
Suppose the conditions of Theorem \ref{Thm:dual} hold, and $ P^{\ast} $ is a sub-transition kernel that satisfies equation (\ref{Def:Eq1}). Let $ \phi $ and $ \psi $ be the indicator functions $ \phi(x) = \mathbb{I}(x \in A) $ and $ \psi(x) = \mathbb{I}(x \in B) $, where $ A, B $ are $ \Sigma$-measurable sets. Then
\begin{align*}
\int_S \pi(dx) \phi(x) \int_S P(x,dy) \psi(y) & = \int_S \pi(dx)   \mathbb{I}(x \in A)  \int_S P(x,dy)  \mathbb{I}(y \in B) \\
& = \int_{A} \pi(dx)   P(x,B) \\
& = \int_{B} \pi(dx)   P^{\ast}(x,A) \\
& = \int_S \pi(dx) \mathbb{I}(y \in B)  \int_S P^{\ast}(x,dy) \mathbb{I}(y \in A) \\
& = \int_S \pi(dx) \psi(x) \int_S P^{\ast}(x,dy) \phi(y).
\end{align*}
The result holds for indicator functions and can be extended by linearity of the integrals to simple functions $ \phi(x) = \sum_{k} a_{k} \mathbb{I}(x \in A_{k}) $ and $ \psi(x) = \sum_{k} b_{k} \mathbb{I}(x \in B_{k}) $, where $ a_{k}, b_{k} \in \mathbb{R} $ and $ A_{k}, B_{k} $ are $ \Sigma$-measurable sets. Now let $\phi$ and $\psi$  be any $\Sigma$-measurable functions, then we can decompose them as $\phi = \phi^{+} - \phi^{-} $ and $ \psi = \psi^{+} - \psi^{-} $,  where $ \phi^{+}, \phi^{-}, \psi^{+}, \psi^{-} \geq 0 $ are $\Sigma$-measurable functions. Then there exists sequences of non-negative, non-decreasing simple functions $ (\phi^{+}_{n}), (\phi^{-}_{n}), (\psi^{+}_{n}) $ and $ (\psi^{-}_{n})$  such that $ \phi^{+}_{n} \rightarrow \phi^{+}, \phi^{-}_{n} \rightarrow \phi^{-}, \psi^{+}_{n} \rightarrow \psi^{+} $ and $ \psi^{-}_{n} \rightarrow \psi^{-}$, with convergence interpreted pointwise. The result follows by applying the Monotone Convergence Theorem and linearity of integration.
\end{proof}

\section{Appendix C --- Proofs of equilibrium properties} \label{AppendC}

In this appendix we prove the results of Section \ref{Sec:Equil}. As previously noted, our analysis of the recursion (\ref{Thm2:Eq1})-(\ref{Thm2:Eq2}) assumes the model has a phase structure and the form of the colonisation function essentially excludes the Allee effect. We will work with the more general form of the recursion given by equation (\ref{theorem1:Eq5}) which reduces to (\ref{Thm2:Eq1})-(\ref{Thm2:Eq2}) when $ \sigma $ has the product form discussed in Section \ref{Sec:Asym}. We state our assumptions more fully here together with two other technical assumptions.
\begin{itemize}
\item[(G)] Phase structure: $ s(\theta) \geq f(x,\theta) $  for all $ (x,\theta) \in [0,\infty) \times \Theta $.
\item[(H)] No Allee effect: Define $ \Theta_{1} = \{\theta \in \Theta : f(x,\theta) = 0  \mbox{ for all } (x,\theta) \in [0,\infty) \}$. For all $ \theta \in \Theta\backslash \Theta_{1} $, $ f(x,\theta) $ is strictly concave in $ x $.
\item[(I)]  $ \sup_{\theta \in \Theta} s(\theta) < 1 $.
\item[(J)] For every $ \phi \in C^{+}(\Omega) $, 
\[
\sigma\left( \{ (\theta,z) \in \Theta \times\Omega: \phi(z) > 0 \mbox{ and } P^{\ast}(\theta,\Theta_{1}) < 1 \mbox{ and } a(\theta) > 0 \} \right) > 0.
\]
\end{itemize}
The importance of Assumptions (G) and (H) has already been discussed in Section \ref{Sec:Equil}. The only additional point concerning Assumption (H) to make is that the set $ \Theta_{1} $ is interpreted as the set of patch characteristics which are incompatible with the patch being colonised. Assumption (I) implies that there is no patch characteristic that makes the survival of the local population to the next time period certain. Assumption (J) can be simplified when $ \sigma $ is the product measure $ \zeta \times \pi $ since
\begin{align*}
& \sigma\left( \{ (\theta,z) \in \Theta \times\Omega: \phi(z) > 0 \mbox{ and } P^{\ast}(\theta,\Theta_{1}) < 1 \mbox{ and } a(\theta) > 0 \} \right) \\
&= \left(\int_\om \mathbb{I}(\phi(z) > 0) \zeta (z) dz \right) \left( \int_\th \mathbb{I}(P^{\ast}(\theta,\Theta_{1}) < 1 \mbox{ and } a(\theta) > 0 ) \pi(d\theta)\right).
\end{align*}
As $ \Omega $ is the support of $ \zeta $, it follows $ \int_\om \mathbb{I}(\phi(z) > 0) \zeta (z) dz  > 0 $ for any $ \phi \in C^{+}(\Omega) $. The conditions on $ \pi $ implied by Assumption (J)  are more subtle. If $ P^{\ast}(\theta,\Theta_{1}) < 1 $ for all $ \theta \in \Theta $, then Assumption (J) requires that the set of patch characteristics corresponding to patches with positive area has positive $ \pi $ measure.

Our analysis uses the following notation. For any two functions $ \phi $ and $ \psi $ defined on a common domain $ \mathcal{X} $ we write $ \phi \leq \psi $ if $ \phi(x) \leq \psi(x) $ for all $ x \in \mathcal{X} $. Similarly, we write $ \phi < \psi $ if $ \phi \leq \psi $ and $ \phi(x) < \psi(x) $ for some $ x \in \mathcal{X}$. Finally, we write $ \phi \ll \psi $ if $ \phi(x) < \psi(x) $ for all $ x \in \mathcal{X} $. With slight abuse of notation, we let $ 0 $ denote the function that is zero for all $ x \in \mathcal{X} $.

\begin{theorem} \label{thm:CLST}
Suppose Assumptions (A)-(D), (G)-(J) hold. For the recursion (\ref{theorem1:Eq5}) either: (i) $ 0 $ is the unique fixed point in $ C^{+}(\Theta \times \Omega) $ or (ii) there are two fixed points in $ C^{+}(\Theta\times \Omega) $ of which one is $ 0 $ and the other $ q^{\ast} $ satisfies $ 0 \ll q^{\ast} $. 
\end{theorem}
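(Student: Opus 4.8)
The plan is to read recursion~(\ref{theorem1:Eq5}) as a fixed-point equation $q=Tq$ for the operator $T$ on $C^+(\Theta\times\Omega)$ sending $q$ to the right-hand side of~(\ref{theorem1:Eq5}), and to exploit that, thanks to the phase structure, $T$ is monotone and concave. Using Assumption~(G) to write $f(x;\theta)=s(\theta)\bar f(x;\theta)$, one has
\[
Tq(\theta,z)=\int_\th s(\eta)\Big[q(\eta,z)+\bar f\big(\psi_q(z);\eta\big)\big(1-q(\eta,z)\big)\Big]P^{\ast}(\theta,d\eta),\qquad \psi_q(z):=\int_\thom a(\tilde\theta)D(z,\tilde z)q(\tilde\theta,\tilde z)\,\sigma(d\tilde\theta,d\tilde z).
\]
First I would record the routine facts: $T0=0$; $T$ maps $C^+(\Theta\times\Omega)$ into itself (continuity in $z$ via Assumption~(D) and in $\theta$ via the regularity of $P^{\ast}$); and since $\int_\th s(\eta)P^{\ast}(\theta,d\eta)\le \sup_\theta s(\theta)<1$ by Assumption~(I), every fixed point satisfies $q\le\sup_\theta s(\theta)<1$, so $1-q$ is bounded below by a positive constant. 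The dichotomy then amounts to showing that if a nonzero fixed point $q^{\ast}$ exists then $0\ll q^{\ast}$ and $q^{\ast}$ is the only nonzero fixed point.

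Next I would establish the two structural properties. Monotonicity: if $q_1\le q_2$ then $Tq_1\le Tq_2$, because $q+\bar f(\psi;\eta)(1-q)=\bar f(\psi;\eta)+q\big(1-\bar f(\psi;\eta)\big)$ is nondecreasing in $q$ (as $\bar f\le 1$) and nondecreasing in $\psi$ (as $\bar f$ is nondecreasing), while $\psi_q$ is nondecreasing in $q$. Concavity/sublinearity: for $\lambda\in(0,1)$, $T(\lambda q)\ge\lambda Tq$, since $\bar f(0;\eta)=0$ together with concavity (Assumption~(H)) gives $\bar f(\lambda\psi;\eta)\ge\lambda\bar f(\psi;\eta)$ and $1-\lambda q\ge 1-q$; moreover on $\Theta\setminus\Theta_1$ the strict concavity of $\bar f(\cdot;\eta)$ makes this strict whenever $\psi>0$, and after averaging against $P^{\ast}(\theta,\cdot)$ it propagates to $T(\lambda q)\gg\lambda Tq$ once one knows $\psi_q\gg 0$ and that iterating $T$ spreads strict inequalities over all of $\Theta$ --- this last point being exactly the irreducibility content of Assumption~(J).

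The core of the argument is the positivity of a nonzero fixed point $q^{\ast}$. If $\psi_{q^{\ast}}\equiv 0$ then $a\,q^{\ast}=0$ $\sigma$-almost everywhere, so~(\ref{theorem1:Eq5}) reduces to $q^{\ast}(\theta,z)=\int_\th s(\eta)q^{\ast}(\eta,z)P^{\ast}(\theta,d\eta)$, which forces $\sup_\theta q^{\ast}(\theta,z)\le(\sup_\theta s(\theta))\sup_\theta q^{\ast}(\theta,z)$ and hence $q^{\ast}\equiv 0$, a contradiction; since $D>0$ and $\sigma(\{(\theta,z):a(\theta)>0\})>0$ (a consequence of Assumption~(J)), we conclude $\psi_{q^{\ast}}(z)>0$ for every $z$, and then $\psi_{q^{\ast}}\gg 0$ by equicontinuity and compactness. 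Now I would propagate positivity: if $q^{\ast}(\theta_0,z)=0$ then both nonnegative terms on the right of~(\ref{theorem1:Eq5}) vanish at $(\theta_0,z)$; the colonisation term, together with $\psi_{q^{\ast}}(z)>0$, $s>0$ and $\bar f(\cdot;\eta)>0$ on $\Theta\setminus\Theta_1$, and $1-q^{\ast}\ge 1-\sup_\theta s(\theta)>0$, forces $P^{\ast}(\theta_0,\Theta_1)=1$, while the survival term forces $q^{\ast}(\cdot,z)=0$ $P^{\ast}(\theta_0,\cdot)$-a.e. on $\{s>0\}$. Thus the set where $q^{\ast}$ vanishes would be, for each $z$, absorbing for $P^{\ast}$ within $\{s>0\}$ and mapped into $\Theta_1$; Assumption~(J), the structure of $\Theta_1$, and the fact that any $\eta\in\Theta_1$ with $P^{\ast}(\eta,\Theta\setminus\Theta_1)>0$ already has $q^{\ast}(\eta,\cdot)>0$, rule out such a nontrivial absorbing null set, giving $q^{\ast}\gg 0$. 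I expect this propagation step to be the main obstacle, since it is where Assumptions~(G)--(J) must all be brought to bear simultaneously.

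Finally, uniqueness follows from the standard monotone--concave operator argument. Given two nonzero fixed points $q_1^{\ast},q_2^{\ast}$, both are $\gg 0$ by the previous step, hence bounded below by positive constants on the compact set $\Theta\times\Omega$, so $\lambda^{\ast}:=\sup\{\lambda\ge 0:\lambda q_1^{\ast}\le q_2^{\ast}\}$ lies in $(0,\infty)$ with $\lambda^{\ast}q_1^{\ast}\le q_2^{\ast}$. If $\lambda^{\ast}<1$, then applying a suitable iterate $T^{r}$ and using monotonicity together with the strict sublinearity established above gives $q_2^{\ast}=T^{r}q_2^{\ast}\ge T^{r}(\lambda^{\ast}q_1^{\ast})\gg\lambda^{\ast}q_1^{\ast}$, so by compactness $q_2^{\ast}\ge(\lambda^{\ast}+\delta)q_1^{\ast}$ for some $\delta>0$, contradicting the definition of $\lambda^{\ast}$; hence $\lambda^{\ast}\ge 1$, i.e. $q_1^{\ast}\le q_2^{\ast}$, and by symmetry $q_1^{\ast}=q_2^{\ast}$. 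Combined with $0\ll q^{\ast}$, this shows that if a nonzero fixed point exists the only fixed points in $C^+(\Theta\times\Omega)$ are $0$ and $q^{\ast}$, which is alternative~(ii); otherwise $0$ is the unique fixed point, alternative~(i).
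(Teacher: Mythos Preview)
Your approach is genuinely different from the paper's. The paper does not analyse the operator $T$ on $C^{+}(\Theta\times\Omega)$ directly; instead it first freezes the connectivity $\phi\in C^{+}(\Omega)$, solves the resulting $\theta$-recursion to obtain $q^{\phi}_{\infty}$ explicitly, and then studies the \emph{reduced} operator $\mathcal{H}:C^{+}(\Omega)\to C^{+}(\Omega)$ given by $\mathcal{H}\phi(z)=\int_\thom a(\tilde\theta)D(z,\tilde z)q^{\phi}_{\infty}(\tilde\theta,\tilde z)\,\sigma(d\tilde\theta,d\tilde z)$. Fixed points of~(\ref{theorem1:Eq5}) correspond to fixed points of $\mathcal{H}$, and the dichotomy for $\mathcal{H}$ is obtained by verifying the five hypotheses of the Hirsch--Smith cone limit set trichotomy. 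The payoff of this reduction is that on $C^{+}(\Omega)$ the strictly positive dispersal kernel $D$ (Assumption~(D)) delivers strong positivity and strong sublinearity in one step: Assumption~(J) is used only to show that $a\cdot q^{\phi}_{\infty}$ (respectively $a\cdot[q^{\lambda\phi}_{\infty}-\lambda q^{\phi}_{\infty}]$) is positive on a set of positive $\sigma$-measure, after which integration against $D>0$ makes $\mathcal{H}\phi$ (respectively $\mathcal{H}(\lambda\phi)-\lambda\mathcal{H}\phi$) strictly positive at every $z$. No propagation through $\Theta$ is required. Your direct argument is more elementary in that it avoids the Hirsch--Smith black box, but it trades this for precisely the $\Theta$-propagation step you flag as the main obstacle.

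That step is where your argument has a real gap. You correctly deduce that $q^{\ast}(\theta_{0},z)=0$ forces $P^{\ast}(\theta_{0},\Theta_{1})=1$, and likewise that $T(\lambda q^{\ast})(\theta_{0},z)=\lambda q^{\ast}(\theta_{0},z)$ only at such $\theta_{0}$. But Assumption~(J) is a $\sigma$-measure statement: it guarantees a set of positive measure on which $P^{\ast}(\theta,\Theta_{1})<1$ and $a(\theta)>0$, not that \emph{every} $\theta_{0}$ reaches $\Theta\setminus\Theta_{1}$ under $P^{\ast}$ along a path with $s>0$. Hence your claim that iterating $T$ ``spreads strict inequalities over all of $\Theta$'' is not supported by~(J), and the Krasnosel'ski\u{\i}-type uniqueness step---which needs $T^{r}(\lambda q_{1}^{\ast})\gg\lambda q_{1}^{\ast}$ for some finite $r$ in order to gain a uniform $\delta$---does not close. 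A second technical issue: for $T$ to map $C^{+}(\Theta\times\Omega)$ into itself you need continuity of $\theta\mapsto\int g(\eta)P^{\ast}(\theta,d\eta)$, i.e.\ the weak Feller property for the \emph{dual} kernel; Assumption~(E) gives this for $P$, not for $P^{\ast}$. The paper's reduction sidesteps both difficulties, since continuity and strict positivity of $\mathcal{H}\phi$ come entirely from $D$.
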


\begin{proof}
For any $ \phi \in C^{+}(\Omega) $, defined the recursion
\begin{equation}
q_{t+1}^{\phi} (\theta,z) = \int_\th s(\eta) q_{t}^{\phi}(\eta,z) P^{\ast}(\theta,d\eta)  + \int_\th f(\phi(z),\eta) (1 - q_{t}^{\phi}(\eta,z)) P^{\ast}(\theta,d\eta). \label{AppC:Eq1}
\end{equation}
Under Assumption (I),  the iterations of (\ref{AppC:Eq1}) converge to a unique fixed point which we denote by $ q^{\phi}_{\infty} $. Let $ (\theta_{t}^{\ast}, t\geq 0) $ be the Markov chain $ (\pi,P^{\ast}) $. We may express $ q^{\phi}_{\infty} $ as
\begin{equation}
q^{\phi}_{\infty}(\theta,z) = \sum_{m=1}^{\infty} \mathbb{E}
\left\{f(\phi(z);\theta_{m}^{\ast}) \prod_{n=1}^{m-1}
\left(s(\theta^{\ast}_{n}) - f(\phi(z);\theta_{n}^{\ast})\right) \mid
\theta_{0}^{\ast} = \theta \right\}. \label{AppC:Eq2}
\end{equation}
Now define the operator $ \mathcal{H} : C^{+}(\Omega) \rightarrow C^{+}(\Omega) $ by
\begin{equation}
\left( \mathcal{H} \phi\right) (z) := \int_\thom a(\tilde{\theta}) D(z,\tilde{z}) q^{\phi}_{\infty}(\tilde{\theta},\tilde{z}) \sigma(d\tilde{\theta},d\tilde{z}). \label{AppC:Eq3}
\end{equation}
Note that if $ \phi $ is a fixed point of $ \mathcal{H} $, then $ q^{\phi}_{\infty} $ is a fixed point of the recursion (\ref{theorem1:Eq5}). Furthermore, any fixed point of (\ref{theorem1:Eq5}) can be used to construct a fixed point of $ \mathcal{H} $. The theorem will be proved if we can show that $ \mathcal{H} $ satisfies the five conditions of the cone limit set trichotomy \citep[Theorem 13]{HS:05}.  

\begin{lemma}[Monotonicity] \label{Lem:Mono}
For any $ \phi,\psi \in C^{+}(\Omega) $, if $ \phi \leq \psi $, then $ \mathcal{H} \phi \leq \mathcal{H} \psi$.
\end{lemma}

\begin{proof} If $ q_{t}^{\phi} \leq q_{t}^{\psi} $, then for all $ \eta \in \Theta $
\begin{align*}
s(\eta) q_{t}^{\phi} (\eta,z) + f(\phi(z),\eta) (1-q_{t}^{\phi}(\eta,z)) & \leq s(\eta) q_{t}^{\phi} (\eta,z) + f(\psi(z),\eta)  (1-q_{t}^{\phi}(\eta,z)) \\
& \leq s(\eta) q_{t}^{\psi} (\eta,z) + f(\psi(z),\eta)  (1-q_{t}^{\psi}(\eta,z)),
\end{align*}
where the first inequality follows as $ f $ is monotone and $ \phi \leq \psi $ and the second follows from Assumption (G) and $ q_{t}^{\phi} \leq q_{t}^{\psi} $. After integrating both sides of the inequality over $ \eta $ with respect to $ P^{\ast}(\theta,d\eta) $, we see $ q^{\phi}_{t+1} \leq q^{\psi}_{t+1} $. We may take $ q_{0}^{\phi} = q_{0}^{\psi} $ so $ q^{\phi}_{t} \leq q^{\psi}_{t} $ for all $ t \geq 0 $. Since the iterations of (\ref{AppC:Eq1}) converge, $ q^{\phi}_{\infty} \leq q^{\psi}_{\infty} $. Substituting into (\ref{AppC:Eq3}), we see that $ \mathcal{H}\phi \leq \mathcal{H}\psi $. 
\end{proof}

\begin{lemma}[Strong sublinearity]
If $ \lambda \in (0,1) $ and $ 0 \ll \phi $, then $ 0 \ll \mathcal{H} (\lambda \phi) - \lambda \mathcal{H} \phi  $.
\end{lemma}

\begin{proof}
From the monotonicity property, $ q^{\lambda\phi}_{\infty}  \leq q^{\phi}_{\infty} $ for all $ \lambda \in (0,1)$. We begin by showing that $ \lambda q^{\phi}_{\infty} \leq q^{\lambda \phi}_{\infty} $.  As $ q^{\phi}_{\infty} $ and $ q^{\lambda \phi}_{\infty} $ are fixed points of (\ref{AppC:Eq1}),
\begin{align*}
 q^{\lambda\phi}_{\infty}(\theta,z) - \lambda q^{\phi}_{\infty}(\theta,z) & = \int_\th s(\eta) \left[ q^{\lambda\phi}_{\infty}(\eta,z) - \lambda q^{\phi}_{\infty}(\eta,z) \right] P^{\ast}(\theta,d\eta) \\
& + \int_\th \left(f(\lambda \phi(z),\eta) (1 - q_{\infty}^{\lambda \phi}(\eta,z)) - \lambda f(\phi(z),\eta)(1 - q_{\infty}^{\phi}(\eta,z)) \right) P^{\ast}(\theta,d\eta) \\
& = \int_\th s(\eta) \left[ q^{\lambda\phi}_{\infty}(\eta,z) - \lambda q^{\phi}_{\infty}(\eta,z) \right] P^{\ast}(\theta,d\eta) \\
& + \int_\th \left(f(\lambda \phi(z),\eta) -  \lambda f(\phi(z),\eta)\right) (1 - q_{\infty}^{\lambda \phi}(\eta,z)) P^{\ast}(\theta,d\eta) \\
&  + \int_\th \lambda f(\phi(z),\eta)(q_{\infty}^{\phi}(\eta,z) -
q_{\infty}^{\lambda \phi}(\eta,z))  P^{\ast}(\theta,d\eta).
\end{align*}
Since $ q^{\lambda \phi}_{\infty} \leq q^{\phi}_{\infty} $ from the monotonicity property,
\begin{align}
 q^{\lambda\phi}_{\infty}(\theta,z) - \lambda q^{\phi}_{\infty}(\theta,z) & \geq \int_\th s(\eta) \left[ q^{\lambda\phi}_{\infty}(\eta,z) - \lambda q^{\phi}_{\infty}(\eta,z) \right] P^{\ast}(\theta,d\eta) \nonumber\\
& + \int_\th \left(f(\lambda \phi(z),\eta) -  \lambda f(\phi(z),\eta)\right)
(1 - q_{\infty}^{\lambda \phi}(\eta,z)) P^{\ast}(\theta,d\eta). \label{AppC:Eq4}
\end{align}
We can bound $ q^{\phi}_{\infty} $ from above as follows. As $ 0 \leq q_{t}^{\phi}(\theta,z) \leq 1 $ for all $ (\theta,z) \in \Theta\times\Omega $, (\ref{AppC:Eq1}) implies
\[
q_{t+1}^{\phi}(\theta,z) \leq \int_\th \left(s(\eta) \vee f(\phi(z),\eta)\right) P^{\ast}(\theta,d\eta).
\]
By Assumption (G), $ q_{t+1}^{\phi}(\theta,z) \leq \sup_{\eta\in\Theta} s(\eta) $ for all $ (\theta,z) \in \Theta \times \Omega $.   Taking $ q_{0}^{\phi} = 0 $ and noting that the iterations of (\ref{AppC:Eq1}) converge to $ q^{\phi}_{\infty} $, it follows $ q_{\infty}^{\phi}(\theta,z) < \sup_{\eta\in\Theta} s(\eta) $ for all $ (\theta,z) \in \Theta \times \Omega $. Iterating inequality (\ref{AppC:Eq4}) and applying Assumption (I), we see
\begin{equation}
 q^{\lambda\phi}_{\infty}(\theta,z) - \lambda q^{\phi}_{\infty}(\theta,z)  \geq \left(1 - \sup_{\eta\in\Theta} s(\eta)  \right) \int_\th \left(f(\lambda \phi(z),\eta) -  \lambda f(\phi(z),\eta)\right)  P^{\ast}(\theta,d\eta) \label{AppC:Eq6}
\end{equation}
so $ \lambda q^{\phi}_{\infty} \leq q^{\lambda \phi}_{\infty} $. 

Now if $ (\mathcal{H} (\lambda \phi))(z) - \lambda (\mathcal{H} \phi)(z) = 0 $ for some $ z \in \Omega $, then 
\begin{equation}
\int_\thom a(\theta) D(z,\tilde{z}) \left[ q_{\infty}^{\lambda\phi}(\theta,\tilde{z}) - \lambda q^{\phi}_{\infty}(\theta,\tilde{z})\right] \sigma(d\theta,d\tilde{z}) = 0. \label{AppC:Eq8}
\end{equation}
Equation (\ref{AppC:Eq8}) and Assumption (D) together imply that $a \cdot [q^{\lambda \phi}_{\infty} - \lambda q^{\phi}_{\infty}] = 0 $, $ \sigma$-almost everywhere.  As $0 \ll \phi $, $ f(\lambda \phi(z),\theta) -  \lambda f(\phi(z),\theta) > 0 $ for all  $ \theta \in \Theta\backslash \Theta_{1} $ and all $ z \in \Omega $.  Using the lower bound (\ref{AppC:Eq6}) and Assumption (J), we see that  $a \cdot [q^{\lambda \phi}_{\infty} - \lambda q^{\phi}_{\infty}]  $ is positive on a set of positive $ \sigma$-measure. Therefore, equation (\ref{AppC:Eq8}) cannot hold for any $ z \in \Omega $. 
\end{proof}

\begin{lemma}[Strong positivity]
If $ 0 < \phi $, then $ 0 \ll \mathcal{H} $.
\end{lemma}

\begin{proof}
If $ (\mathcal{H}\phi)(z) = 0  $ for some $ z \in \Omega $, then
\begin{equation} 
\int_\thom a(\theta) D(z,\tilde{z}) q_{\infty}^{\phi}(\theta,\tilde{z}) \sigma(d\theta,d\tilde{z}) = 0. \label{Lem:SP1}
\end{equation}
By Assumption (D), this implies $ a(\theta)  q_{\infty}^{\phi}(\theta,z) = 0 $ $ \sigma$-almost everywhere. Assumption (G) and equation (\ref{AppC:Eq1}) imply
\begin{equation}
q_{\infty}^{\phi}(\theta,z) \geq \int_\th f(\phi(z),\eta) P^{\ast}(\theta,d\eta).  \label{AppC:Eq5}
\end{equation}
Using the lower bound (\ref{AppC:Eq5}) and Assumption (J), we see that  $a \cdot q^{\phi}_{\infty} $ is positive on a set of positive $ \sigma$-measure. Hence, (\ref{Lem:SP1}) cannot hold for any $ z\in\Omega$.
\end{proof}

\begin{lemma}[Continuity]
The operator $ \mathcal{H} $ is a continuous on $ C^{+}(\Omega)$.
\end{lemma}

\begin{proof}
For any $ \phi, \psi \in C^{+}(\Omega)$,
\begin{equation}
\sup_{z\in\Omega} \left|(\mathcal{H}\phi)(z) - (\mathcal{H} \psi)(z) \right| \leq \bar{D} \int_\thom a(\theta) | q_{\infty}^{\phi}(\theta,\tilde{z}) - q_{\infty}^{\psi} (\theta,\tilde{z}) | \sigma(d\theta,d\tilde{z}) \label{AppC:Eq7}
\end{equation}
by Assumption (D). As $ q^{\phi}_{\infty} $ and $ q^{\psi}_{\infty} $ are fixed points of (\ref{AppC:Eq1}),
\begin{align*}
q^{\phi}_{\infty}(\theta,z) - q_{\infty}^{\psi}(\theta,z) & = \int_\th \left( s(\eta) - f(\psi(z),\eta)\right) \left( q^{\phi}_{\infty}(\theta,z) - q_{\infty}^{\psi}(\theta,z)\right) P^{\ast}(\theta,d\eta)  \\
& + \int_\th \left( f(\phi(z),\eta) - f(\psi(z),\eta) \right) \left( 1 - q_{\infty}^{\phi}(\eta,z)\right) P^{\ast}(\theta,d\eta).
\end{align*}
By Assumption (G) and (C),
\begin{align*}
\sup_{\theta \in \Theta }  \left| q^{\phi}_{\infty}(\theta,z) - q_{\infty}^{\psi}(\theta,z) \right| & \leq \sup_{\theta \in \Theta }\int_\th s(\eta) \left| q^{\phi}_{\infty}(\theta,z) - q_{\infty}^{\psi}(\theta,z)\right| P^{\ast}(\theta,d\eta)   + L  \left| \phi(z) - \psi(z)\right| \\
& \leq  \left(\sup_{\theta \in \Theta} s(\theta) \right) \left( \sup_{\theta\in\Theta} \left| q^{\phi}_{\infty}(\theta,z) - q_{\infty}^{\psi}(\theta,z)\right| \right) + L|\phi(z) - \psi(z)|.
\end{align*}
By Assumption (I),
\begin{equation}
\sup_{\theta \in \Theta } \left| q^{\phi}_{\infty}(\theta,z) - q_{\infty}^{\psi}(\theta,z) \right|  \leq  L \left( 1 - \sup_{\theta \in \Theta} s(\theta) \right) |\phi(z) - \psi(z)|.
\end{equation}
Substituting this into inequality (\ref{AppC:Eq7}), we see that $ \mathcal{H} $ is a (Lipschitz) continuous operator on $ C^{+}(\Omega)$.
\end{proof}

\begin{lemma}[Order compactness]
For any $ \psi_{1}, \psi_{2} \in C^{+}(\Omega) $, $ \mathcal{H} $ maps the set $ \{\phi \in C^{+}(\Omega) : \psi_{1} \leq \phi \leq \psi_{2} \} $ to a relatively compact set.
\end{lemma}

\begin{proof}
As $ q_{\infty}^{\phi} (\theta,z) \leq 1 $ for all $ (\theta,z) \in \Theta\times\Omega$ and $ D(\cdot,\cdot) $ is uniformly bounded and equicontinuous by Assumption (D), the image of  $ \{\phi \in C^{+}(\Omega) : \psi_{1} \leq \phi \leq \psi_{2} \} $  under $ \mathcal{H} $ is a set of uniformly continuous functions. Order compactness now follows from the Arzel\`{a}-Ascoli theorem. 
\end{proof}

We may now apply the cone limit set trichotomy. We have seen in the proof of strong sublinearity that for any $ \phi \in C^{+}(\Omega) $, $ q_{\infty}^{\phi}(\theta,z) < \sup_{\eta\in\Theta} s(\eta) $ for all $ (\theta,z) \in \Theta \times \Omega $. Substituting this bound into equation (\ref{AppC:Eq3}) shows that $ \mathcal{H} $ is a bounded operator. This excludes the possibility of an orbit of $ \mathcal{H} $ being unbounded. Therefore, either (i) each orbit of $ \mathcal{H} $ converges to 0, the unique fixed point of $ \mathcal{H}$, or (ii) each nonzero orbit converges to $ q^{\ast} \gg 0$; the unique nonzero fixed point of $\mathcal{H}$. This completes the proof of Theorem \ref{thm:CLST}.
\end{proof}

To determine which of the two possibilities from Theorem \ref{thm:CLST} occurs, we need to define the operator $ \mathcal{A} : C^{+}(\Omega) \rightarrow C^{+}(\Omega) $,
\[
(\mathcal{A} \phi)(z) = \int_\thom a(\tilde{\theta}) D(z,\tilde{z}) \phi(\tilde{z}) \sum_{m=1}^{\infty} \mathbb{E} \left\{f^{\prime}(0;\theta^{\ast}_{m}) \prod_{n=1}^{m-1} s(\theta^{\ast}_{n}) \mid \theta_{0}^{\ast} = \tilde{\theta} \right\} \sigma(d\tilde{z},d\tilde{\theta}).
\]
The spectral radius of $ \mathcal{A}$ is denoted $ r(\mathcal{A}) $. 

\begin{theorem} \label{thm:Threshold}
Suppose Assumptions (A)-(D), (G)-(J) hold. If $ r(\mathcal{A}) \leq 1 $, then $ 0 $ is the unique fixed point of the recursion (\ref{theorem1:Eq5}). If $ r(\mathcal{A}) > 1 $, then recursion (\ref{theorem1:Eq5}) has a non-zero fixed point. 
\end{theorem}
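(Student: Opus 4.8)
The plan is to read off the dichotomy already established in Theorem~\ref{thm:CLST} by comparing the concave operator $\mathcal{H}$ with its linearisation at the zero function, which is exactly $\mathcal{A}$. First I would record two comparison facts. Since each $f(\cdot;\theta)$ is concave with $f(0;\theta)=0$ (Assumptions (G),(H)), one has $f(x;\theta)\le f'(0;\theta)x$; substituting this together with $0\le s(\theta^\ast_n)-f(\phi(z);\theta^\ast_n)\le s(\theta^\ast_n)$ (Assumption (G)) into the series (\ref{AppC:Eq2}) gives $q^\phi_\infty(\theta,z)\le\phi(z)\,\mathbb{E}\{\sum_{m\ge1}f'(0;\theta^\ast_m)\prod_{n=1}^{m-1}s(\theta^\ast_n)\mid\theta^\ast_0=\theta\}$, and hence $\mathcal{H}\phi\le\mathcal{A}\phi$ on $C^+(\Omega)$; moreover for $\phi\gg0$ the strict concavity on $\Theta\setminus\Theta_1$ makes the inequality strict on a set of positive $\sigma$-measure, and because $D>0$ everywhere (Assumption (D)) this yields the strict domination $\mathcal{H}\phi\ll\mathcal{A}\phi$. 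Second, $\mathcal{A}$ is compact (the set $\{\mathcal{A}\phi:\|\phi\|_\infty\le1\}$ is uniformly bounded and, by the equicontinuity in Assumption (D), equicontinuous, so Arzel\`a--Ascoli applies) and strongly positive ($\mathcal{A}\phi\gg0$ whenever $0<\phi$, using $D>0$, Assumption (J) and $f'(0;\theta)>0$ for $\theta\in\Theta\setminus\Theta_1$). By Krein--Rutman there is $v\gg0$ with $\mathcal{A}v=r(\mathcal{A})v$.

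\textbf{Case $r(\mathcal{A})\le1$.} By Theorem~\ref{thm:CLST} it suffices to exclude alternative (ii), so suppose $q^\ast$ is a non-zero fixed point of (\ref{theorem1:Eq5}). Then $\phi^\ast(z):=\int_{\Theta\times\Omega}a(\tilde\theta)D(z,\tilde z)q^\ast(\tilde\theta,\tilde z)\sigma(d\tilde\theta,d\tilde z)$ is a fixed point of $\mathcal{H}$ and, by strong positivity, $\phi^\ast\gg0$, so $\phi^\ast=\mathcal{H}\phi^\ast\ll\mathcal{A}\phi^\ast$. Since $\Omega$ is compact and $\phi^\ast,\mathcal{A}\phi^\ast\in C(\Omega)$, there is $c>0$ with $\mathcal{A}\phi^\ast\ge(1+c)\phi^\ast$; iterating, using that $\mathcal{A}$ is a positive (hence monotone) operator, gives $\mathcal{A}^n\phi^\ast\ge(1+c)^n\phi^\ast$, whence $\|\mathcal{A}^n\|^{1/n}\ge1+c$ for all $n$, contradicting $r(\mathcal{A})=\lim_n\|\mathcal{A}^n\|^{1/n}\le1$. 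Hence alternative (i) holds and $0$ is the unique fixed point.

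\textbf{Case $r(\mathcal{A})>1$.} Here I would produce a positive subsolution and invoke the trichotomy in the other direction. Take $\phi_0=\delta v$ with $\delta>0$ small. Expanding (\ref{AppC:Eq2}) with $f(\delta v(z);\eta)=\delta v(z)f'(0;\eta)+o(\delta)$ uniformly in $(\eta,z)$, and noting that the deviation of $\prod_{n=1}^{m-1}(s(\theta^\ast_n)-f(\delta v(z);\theta^\ast_n))$ from $\prod_{n=1}^{m-1}s(\theta^\ast_n)$ is $O\!\left(m\delta(\sup_\Theta s)^{m-2}\right)$, one gets (the series converging since $\sup_\Theta s<1$ by Assumption (I)) $q^{\delta v}_\infty(\theta,z)=\delta v(z)\,\mathbb{E}\{\sum_{m\ge1}f'(0;\theta^\ast_m)\prod_{n=1}^{m-1}s(\theta^\ast_n)\mid\theta^\ast_0=\theta\}+o(\delta)$ uniformly on $\Theta\times\Omega$. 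Substituting into (\ref{AppC:Eq3}) gives $\mathcal{H}(\delta v)=\delta\mathcal{A}v+o(\delta)=\delta r(\mathcal{A})v+o(\delta)$, and since $r(\mathcal{A})>1$ and $\min_\Omega v>0$ we obtain $\mathcal{H}(\delta v)\ge\delta v$ for $\delta$ small enough. By the monotonicity of $\mathcal{H}$ (Lemma~\ref{Lem:Mono}) the orbit $\{\mathcal{H}^n\phi_0\}$ is non-decreasing, and it is bounded since $\mathcal{H}$ is a bounded operator; in particular it does not converge to $0$, so Theorem~\ref{thm:CLST} puts us in alternative (ii), i.e. (\ref{theorem1:Eq5}) has a non-zero fixed point.

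\textbf{Main obstacle.} The crux is the uniform linearisation estimate $q^{\delta v}_\infty(\theta,z)=\delta v(z)\,\mathbb{E}\{\cdots\}+o(\delta)$: the error has to be controlled uniformly over $\Theta\times\Omega$, which requires the differentiability of $f(\cdot;\theta)$ at $0$ to be uniform in $\theta$ (available via Dini's theorem, since $x\mapsto f'(0;\theta)-f(x;\theta)/x$ decreases to $0$ and $\Theta$ is compact) together with a careful accounting of the product-expansion errors, the geometric tail being absorbed by $\sup_\Theta s<1$. Secondary technical points are the verification of the Krein--Rutman hypotheses for $\mathcal{A}$ (compactness from Arzel\`a--Ascoli and Assumption (D); strong positivity from $D>0$, Assumption (J) and $f'(0;\theta)>0$ on $\Theta\setminus\Theta_1$; and $r(\mathcal{A})>0$) and, in the case $r(\mathcal{A})=1$ above, the need for the \emph{strict} domination $\mathcal{H}\phi\ll\mathcal{A}\phi$ rather than merely $\mathcal{H}\phi\le\mathcal{A}\phi$.
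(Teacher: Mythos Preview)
Your proposal is correct and follows essentially the same route as the paper: establish the strict domination $\mathcal{H}\phi\ll\mathcal{A}\phi$ for nonzero $\phi$, then use a spectral/Krein--Rutman comparison with the linearisation to rule out a nonzero fixed point when $r(\mathcal{A})\le 1$ and to produce a subsolution $\psi\le\mathcal{H}\psi$ when $r(\mathcal{A})>1$, after which Theorem~\ref{thm:CLST} (or Schauder) yields the nonzero fixed point. The only difference is packaging: the paper delegates the two spectral steps to Lemmas~A.1 and~A.2 of \cite{MP:14}, whereas you spell them out directly (the iterated lower bound $\mathcal{A}^n\phi^\ast\ge(1+c)^n\phi^\ast$ and the explicit linearisation $\mathcal{H}(\delta v)=\delta r(\mathcal{A})v+o(\delta)$), and you invoke the trichotomy rather than Schauder to conclude in the supercritical case.
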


\begin{proof}
By Assumptions (C), (G) and (H)
\[
\phi(z) \mathbb{E} \left\{f^{\prime}(0;\theta^{\ast}_{m}) \prod_{n=1}^{m-1} s(\theta^{\ast}_{n}) \mid \theta_{0}^{\ast} = \theta \right\}  - q_{\infty}^{\phi}(\theta,z) \geq \mathbb{E} \left\{\phi(z) f^{\prime}(0;\theta^{\ast}_{1}) - f(\phi(z);\theta_{1}^{\ast}) \mid \theta_{0}^{\ast} = \theta \right\}
\]
so 
\[
(\mathcal{A}\phi)(z) - (\mathcal{H} \phi)(z) \geq \int_\thom a(\tilde{\theta} ) D(z,\tilde{z})  \mathbb{E} \left\{\phi(z) f^{\prime}(0;\theta^{\ast}_{1}) - f(\phi(z);\theta_{1}^{\ast}) \mid \theta_{0}^{\ast} = \tilde{\theta} \right\} \sigma(d\tilde{z},d\tilde{\theta}).
\]
Now $ \phi(z) f^{\prime}(0;\theta) - f(\phi(z);\theta) > 0 $ if $ \phi(z) > 0 $ and $ \theta \in \Theta\backslash \Theta_{1} $.  By Assumption (D), $ D(z,\tilde{z}) > 0 $ for all $ (z,\tilde{z}) \in \Omega \times \Omega $. Assumption (J) now implies that if $ \phi \neq 0 $, then
\[
(\mathcal{A} \phi)(z) - (\mathcal{H} \phi )(z) > 0,
\]
for all $ z \in \Omega $. If $ \phi = 0 $, then by substitution $ \mathcal{A} \phi = \mathcal{H} \phi = 0 $. By \cite[Lemma A.1]{MP:14}, it follows that if $ r(\mathcal{A}) \leq 1 $, then $ 0 $ is the unique fixed point of $ \mathcal{H}$ and hence is the unique fixed point of the recursion (\ref{Thm2:Eq1})-(\ref{Thm2:Eq2}).

As in \cite[Lemma A.2]{MP:14}, if $ r(\mathcal{A}) > 1 $, then there exists a $ \psi \in C^{+}(\Omega) $ such that $ \psi \leq \mathcal{H} \psi $. This implies that the set $ \{\phi \in C^{+}(\Omega) : \psi \leq \phi \} $ is invariant under $ \mathcal{H} $. The Schauder fixed point theorem \cite[Theorem 5.1.2]{Istratescu:81} shows that $ \mathcal{H} $ has a fixed point in $ \{\phi \in C^{+}(\Omega) : \psi \leq \phi \} $. This non-zero fixed point must be unique by Theorem \ref{thm:CLST}.
\end{proof}

\begin{theorem} \label{thm:Conv}
Suppose Assumptions (A)-(D), (G)-(J) hold. If $ r(\mathcal{A}) \leq 1 $, then all trajectories of  the recursion (\ref{theorem1:Eq5}) converge to $ 0 $. If $ r(\mathcal{A}) > 1$ and $ \frac{\partial \mu_{0}}{\partial \sigma} (\theta,z) > \epsilon $ for all $ (\theta,z) \in \Theta\times\Omega $ and some $ \epsilon > 0 $, then trajectory of the recursion (\ref{theorem1:Eq5}) converge to the non-zero fixed point. 
\end{theorem}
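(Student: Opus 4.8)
The plan is to realise the recursion (\ref{theorem1:Eq5}) as the iteration of a single operator, check that this operator is monotone, and then trap every trajectory between a decreasing orbit started from a supersolution and an increasing orbit started from a subsolution, whose common limit is identified using Theorems \ref{thm:CLST} and \ref{thm:Threshold}. Write $q_{t}=\frac{\partial\mu_{t}}{\partial\sigma}$, let $(\mathcal{C}q)(z):=\int_\thom a(\tilde{\theta})D(z,\tilde{z})q(\tilde{\theta},\tilde{z})\sigma(d\tilde{\theta},d\tilde{z})$, and let $\mathcal{T}$ be the operator on $\{q:0\le q\le 1\}$ defined by the right-hand side of (\ref{theorem1:Eq5}), so that $q_{t+1}=\mathcal{T}q_{t}$ and (recalling (\ref{AppC:Eq1})) $\mathcal{H}\phi=\mathcal{C}q^{\phi}_{\infty}$. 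Exactly as in the proof of Lemma \ref{Lem:Mono}, Assumption (G) makes $\mathcal{T}$ monotone: if $q\le q'$ then $\mathcal{C}q\le\mathcal{C}q'$ and, pointwise in $\eta$, $s(\eta)q+f(\mathcal{C}q,\eta)(1-q)\le s(\eta)q'+f(\mathcal{C}q',\eta)(1-q')$, since $x\mapsto s(\eta)x+c(1-x)$ is nondecreasing whenever $c\le s(\eta)$.

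Next I would classify the bounded fixed points of $\mathcal{T}$. Writing $\bar s:=\sup_{\theta}s(\theta)<1$ (Assumption (I)), the constant function $\bone$ is a supersolution because by (G), $(\mathcal{T}\bone)(\theta,z)=\int_\th s(\eta)P^{\ast}(\theta,d\eta)\le\bar s\,\bone$; hence $\mathcal{T}^{t}\bone$ is nonincreasing, with pointwise limit $\bar q_{\infty}$. Dominated convergence applied to the two integrals defining $\mathcal{T}$ (the kernels are fixed, $f$ is continuous in its first argument by (C), and all integrands are bounded) shows $\bar q_{\infty}=\mathcal{T}\bar q_{\infty}$. For any bounded measurable fixed point $q$ of $\mathcal{T}$, set $\Phi:=\mathcal{C}q$, which lies in $C^{+}(\om)$ by the equicontinuity in (D); then $\mathcal{T}q=q$ says precisely that $q$ is a fixed point of the recursion (\ref{AppC:Eq1}) with $\phi=\Phi$, and that recursion is a $\bar s$-contraction in the supremum norm, so $q=q^{\Phi}_{\infty}$ and $\mathcal{H}\Phi=\mathcal{C}q^{\Phi}_{\infty}=\Phi$. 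By Theorem \ref{thm:Threshold} the only fixed points of $\mathcal{H}$ in $C^{+}(\om)$ are $0$ and, when $r(\mathcal{A})>1$, a unique nonzero $\phi^{\ast}$; since $q^{0}_{\infty}=0$ (as $f(0)=0$) and we set $q^{\ast}:=q^{\phi^{\ast}}_{\infty}$, the only bounded measurable fixed points of $\mathcal{T}$ are $0$ and, when $r(\mathcal{A})>1$, $q^{\ast}$. In particular, if $r(\mathcal{A})\le 1$ then $\bar q_{\infty}=0$, and since $q_{t}=\mathcal{T}^{t}q_{0}\le\mathcal{T}^{t}\bone$ by monotonicity, every trajectory converges to $0$.

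For $r(\mathcal{A})>1$ I would supply a matching subsolution. The proof of Theorem \ref{thm:Threshold} yields $\psi\in C^{+}(\om)$ with $\psi\le\mathcal{H}\psi$; replacing $\psi$ by $\mathcal{H}\psi$ one may assume $\psi\gg0$ (strong positivity), and then $\lambda\psi\le\mathcal{H}(\lambda\psi)$ for all $\lambda\in(0,1)$ (strong sublinearity). From the series representation (\ref{AppC:Eq2}) together with the uniform Lipschitz bound and $f(0)=0$ in (C), one obtains $\sup q^{\lambda\psi}_{\infty}\le L\lambda\|\psi\|_{\infty}/(1-\bar s)$, so for $\lambda$ small enough $\underline q:=q^{\lambda\psi}_{\infty}$ satisfies $\underline q\le\epsilon\,\bone$. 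This $\underline q$ is a $\mathcal{T}$-subsolution: since $q^{\lambda\psi}_{\infty}$ solves (\ref{AppC:Eq1}) with $\phi=\lambda\psi$ while $\mathcal{C}\underline q=\mathcal{H}(\lambda\psi)\ge\lambda\psi$, subtraction gives $(\mathcal{T}\underline q-\underline q)(\theta,z)=\int_\th\big[f((\mathcal{C}\underline q)(z),\eta)-f(\lambda\psi(z),\eta)\big](1-\underline q(\eta,z))P^{\ast}(\theta,d\eta)\ge0$; and $\underline q\neq0$ because $\lambda\psi\gg0$, so (\ref{AppC:Eq2}) and Assumption (J) force $\underline q>0$ on a set of positive $\sigma$-measure. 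Since the initial condition satisfies $\frac{\partial\mu_{0}}{\partial\sigma}>\epsilon\,\bone\ge\underline q$, monotonicity gives $\mathcal{T}^{t}\underline q\le q_{t}\le\mathcal{T}^{t}\bone$ for all $t$; the subsolution property makes $\mathcal{T}^{t}\underline q$ nondecreasing with limit a fixed point $\ge\underline q\neq0$, hence equal to $q^{\ast}$, while $\mathcal{T}^{t}\bone$ decreases to a fixed point $\ge\underline q\neq0$, hence also equal to $q^{\ast}$. Squeezing, $q_{t}\to q^{\ast}$.

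The step I expect to be the main obstacle is the passage between the full recursion $\mathcal{T}$ and the reduced operator $\mathcal{H}$ on which all the earlier structure rests: one must confirm that the monotone limits of $\mathcal{T}^{t}\bone$ and $\mathcal{T}^{t}\underline q$ are genuine fixed points — handled by dominated convergence rather than by continuity of $\mathcal{T}$, which matters because the dual kernel $P^{\ast}$ need not be weakly Feller — and that the bounded fixed points of $\mathcal{T}$ coincide with those already classified for $\mathcal{H}$, which is where the supremum-norm contraction property of (\ref{AppC:Eq1}) and the correspondence $q\mapsto\mathcal{C}q$ are essential. The remaining delicate point is tuning the scaling parameter $\lambda$ so that $\underline q$ is simultaneously dominated by $\epsilon$, strictly positive on a set of positive $\sigma$-measure, and a true subsolution; fortunately all three demands move consistently as $\lambda\downarrow0$.
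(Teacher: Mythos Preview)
Your proposal is correct and follows essentially the same strategy as the paper: establish monotonicity of the recursion (exactly as in Lemma \ref{Lem:Mono}) and then run the standard supersolution/subsolution sandwich for monotone maps, trapping every orbit between iterates started from $\bone$ and from a small subsolution, with the limits identified via the fixed-point classification already provided by Theorems \ref{thm:CLST} and \ref{thm:Threshold}. The paper's own proof simply delegates this sandwich argument to \cite[Theorems 5.1 and 5.3]{BIT:91} and \cite[Theorem 3]{MP:13}, whereas you have written out the details --- including the explicit construction of the subsolution $\underline q=q^{\lambda\psi}_{\infty}$ and the dominated-convergence passage to the limit --- in full.
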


\begin{proof}
Under Assumptions (C) and (G), we can use a similar argument to that used in the proof of Lemma \ref{Lem:Mono} to show that recursion (\ref{theorem1:Eq5}) has the monotonicity property; if $ \frac{\partial \tilde{\mu}_{0}}{\partial \sigma} $ and  $ \frac{\partial \mu_{0}}{\partial \sigma} $ are two initial conditions such that $ \frac{\partial \tilde{\mu}_{0}}{\partial \sigma} \leq \frac{\partial \mu_{0}}{\partial \sigma} $ in the partial ordering on $ C^{+}(\Omega) $, then under recursion (\ref{theorem1:Eq5}) $ \frac{\partial \tilde{\mu}_{t}}{\partial \sigma} \leq \frac{\partial \mu_{t}}{\partial \sigma} $ for all $ t \geq 0 $. The proof then follows the arguments of \cite[Theorems 5.1 and 5.3 ]{BIT:91} as used in \cite[Theorem 3]{MP:13}.
\end{proof}

\begin{proof}[Proof of Theorem \ref{theorem3}]
The result follows from Theorems \ref{thm:CLST}, \ref{thm:Threshold} and \ref{thm:Conv} with $ \sigma $ replaced by the product measure $ \zeta \times \pi $.
\end{proof}

\begin{proof}[Proof of Theorem \ref{thm:LandDist}] Assuming $ f(x;\theta) = s(\theta)\bar{f}(x) $, equation (\ref{AppC:Eq2}) becomes
\begin{equation}
q^{\phi}_{\infty}(\theta,z) = \sum_{m=1}^{\infty} \bar{f}(\phi(z)) (1 - \bar{f}(\phi(z)))^{m-1} \mathbb{E} \left\{ \prod_{n=1}^{m} s(\theta^{\ast}_{n}) \mid \theta_{0}^{\ast} = \theta \right\}. \label{AppC:Eq2b}
\end{equation}
Integrating $ a(\theta) q^{\phi}_{\infty}(\theta,z) $ with respect to $ \pi $ shows that 
\begin{equation}
\int_\th a(\theta) q^{\phi}_{\infty}(\theta,z) \pi(d\theta) = \sum_{m=1}^{\infty} \bar{f}(\phi(z)) (1 - \bar{f}(\phi(z)))^{m-1} \mathbb{E} \left\{ a(\theta_{m+1}) \prod_{n=1}^{m} s(\theta_{n}) \right\}. \label{AppC:Eq2c}
\end{equation}
Substituting equation (\ref{AppC:Eq2c}) into equation (\ref{AppC:Eq3}) and using the product form for $ \sigma $ yields
\begin{equation}
\left( \mathcal{H} \phi\right) (z) = \int_\om  D(z,\tilde{z})  \sum_{m=1}^{\infty} \bar{f}(\phi(z)) (1 - \bar{f}(\phi(z)))^{m-1} \mathbb{E} \left\{ a(\theta_{m+1}) \prod_{n=1}^{m} s(\theta_{n}) \right\} \zeta(\tilde{z}) d\tilde{z}. \label{AppC:Eq3b}
\end{equation}
As $ \mathcal{H} $ depends on $ (\pi, P ) $ only through the sequence $ \mathbb{E} \left( a(\theta_{m+1})\prod_{n=1}^{m} s(\theta_{n}) \right),\ m \geq 1 $ so must its fixed point. 
\end{proof}

\begin{proof}[Proof of Theorem \ref{thm:SO}]
Let $ \tilde{\mathcal{H}} $ be the operator obtained by replacing $ \theta_{n} $ with $ \tilde{\theta}_{n} $ in equation (\ref{AppC:Eq3b}). From (\ref{Thm4:Eq1}), it follows that  for any $ \phi \in C^{+}(\Omega) $
\[
\tilde{\mathcal{H}} \phi \leq \mathcal{H} \phi.
\]
If $ \tilde{\phi}^{\ast} $ is a fixed point of $ \tilde{\mathcal{H}}$, then $ \tilde{\phi}^{\ast} = \tilde{\mathcal{H}}\tilde{\phi}^{\ast} \leq \mathcal{H}\tilde{\phi}^{\ast} $. Therefore, the set $ \{\phi \in C^{+}(\Omega): \tilde{\phi}^{\ast} \leq \phi \} $ is invariant under $ \mathcal{H} $ and by the Schauder fixed point theorem $ \mathcal{H} $ has a fixed point in this set. As $ \mathcal{H} $ has a unique non-zero fixed point (Theorem \ref{thm:CLST}), $ \tilde{\phi}^{\ast} \leq \phi^{\ast}$. Now by equation (\ref{AppC:Eq2c})
\begin{align*}
\int_\th a(\theta) \tilde{q}^{\ast} (\theta,z) \tilde{\pi}(d\theta)  & =  \sum_{m=1}^{\infty} \bar{f}(\phi(z)) (1 - \bar{f}(\tilde{\phi}^{\ast}(z)))^{m-1} \mathbb{E} \left\{ a(\tilde{\theta}_{m+1}) \prod_{n=1}^{m} s(\tilde{\theta}_{n}) \right\} 
\\
& \leq   \sum_{m=1}^{\infty} \bar{f}(\phi(z)) (1 - \bar{f}(\tilde{\phi}^{\ast}(z)))^{m-1} \mathbb{E} \left\{ a(\theta_{m+1}) \prod_{n=1}^{m} s(\theta_{n}) \right\}
\end{align*}
from inequality (\ref{Thm4:Eq1}).  Now let $ \alpha_{m} = \mathbb{E} \left( a(\theta_{m+1})\prod_{n=1}^{m} s(\theta_{n}) \right)  - \mathbb{E} \left(a(\theta_{m+2}) \prod_{n=1}^{m+1} s(\theta_{n}) \right) $. Then
\begin{align*}
\int_\th a(\theta) \tilde{q}^{\ast} (\theta,z) \tilde{\pi}(d\theta)  & \leq \sum_{m=1}^{\infty} \bar{f}(\tilde{\phi}^{\ast}(z)) (1 - \bar{f}(\tilde{\phi}^{\ast}(z)))^{m-1} \left\{ \sum_{r=m}^{\infty} \alpha_{r} \right\}\nonumber\\
& =  \sum_{r=1}^{\infty}  (1 - (1-\bar{f}(\tilde{\phi}^{\ast}(z)))^{r})  \alpha_{r}
\end{align*}
As $ \tilde{\phi}^{\ast} \leq \phi^{\ast}$ and $ \bar{f} $ is increasing, $ (1 - (1-\bar{f}(\tilde{\phi}^{\ast}(z)))^{r}) \leq (1 - (1-\bar{f}(\phi^{\ast}(z)))^{r}) $ for all $ r \geq 1 $ and all $ z \in \Omega $. Therefore, 
\begin{align*}
\int_\th a(\theta) \tilde{q}^{\ast} (\theta,z) \tilde{\pi}(d\theta)  & \leq  \sum_{r=1}^{\infty}  (1 - (1-\bar{f}(\phi^{\ast}(z)))^{r})  \alpha_{r}  \\
& =   \sum_{m=1}^{\infty} \bar{f}(\phi(z)) (1 - \bar{f}(\phi^{\ast}(z)))^{m-1} \mathbb{E} \left\{ a(\theta_{m+1}) \prod_{n=1}^{m} s(\theta_{n}) \right\} \\
& = \int_\th a(\theta) q^{\ast} (\theta,z) \pi(d\theta).
\end{align*}
\end{proof}

\begin{proof}[Proof of Lemma \ref{thm:Max}]
From the general form of H\"{o}lder's inequality
\[
\mathbb{E} \left( \prod_{n=0}^{m} s(\theta_{n})\right) \leq
\prod_{n=0}^{m} \mathbb{E} \left(s(\theta_{n})^{m+1}\right)^{1/(m+1)}.
\]
As $ (\theta_{t},\ t\geq 0) $ is assumed stationary $ \mathbb{E}(s(\theta_{t})^{m+1}) = \mathbb{E}(s(\theta_{0})^{m+1}) $ for $ t = 1,\ldots,m$. Therefore,
\[
\mathbb{E}\left(\prod_{n=0}^{m}s(\theta_{n})\right) \leq \mathbb{E}(s(\theta_{0})^{m+1}).
\]
\end{proof}

\subsection{Numerical approximation of the fixed point}

To compute $ \int_\th q^{\ast}(\theta,z) \pi(d\theta) $, we first numerically determined the non-zero fixed point the operator $ \mathcal{H} $ from (\ref{AppC:Eq3}) employing the product form of $ \sigma $ and the expression for $ \int_\th q^{\phi}(\theta,z) \pi(d\theta) $ given by (\ref{AppC:Eq2c}).  This was done by fixed point iteration of an approximation to the operator $ \mathcal{H} $ where we (i) approximated the integral with respect to $ \zeta $ by a Reimann sum with 500 terms, (ii) truncated the infinite sum in (\ref{AppC:Eq2c}) to 1000 terms, and (iii) approximated the moments $ \mathbb{E}(\prod_{n=1}^{m}s(\theta_{n})) $ by simulating 1000 sample paths of the survival process. The fixed point of $ \mathcal{H} $  was then substituted into equation  (\ref{AppC:Eq2c}) to give an approximation to the limiting probability of the patch being occupied.

\end{document}